%
%
%
%
\documentclass{amsart}

\usepackage{graphicx}
\usepackage[numbers]{natbib}
\usepackage{color,soul}
\usepackage[all]{xy}
\usepackage{subfigure}
\usepackage{geometry,mathtools}
\usepackage{amsthm}
\usepackage{float}


\setcounter{MaxMatrixCols}{15}

\newtheorem{theorem}{Theorem}[section]

\newtheorem{lemma}[theorem]{Lemma}

\newtheorem{definition}[theorem]{Definition}
\newtheorem{example}[theorem]{Example}

\newtheorem{prop}[theorem]{Proposition}
\newtheorem{corollary}[theorem]{Corollary}
\newtheorem{property}[theorem]{Property}
\theoremstyle{remark}
\newtheorem{remark}[theorem]{Remark}

\numberwithin{equation}{section}



\begin{document}

\title{Studies of distance one surgeries on the lens space $L(p,1)$}

\author{Zhongtao Wu \and Jingling Yang}
\address{Department of Mathematics, The Chinese University of Hong Kong, Hong Kong}
\email{ztwu@math.cuhk.edu.hk}
\email{yangjinglingm@gmail.com (Corresponding author)}
\thanks{} 





\keywords{band surgery, lens space, Dehn surgery, DNA topology}

\begin{abstract}
In this paper, we study distance one surgeries between lens spaces $L(p,1)$ with $p \geq 5$ prime and lens spaces $L(n,1)$ for $n \in \mathbb{Z}$ and band surgeries from $T(2,p)$ to $T(2,n)$. In particular, we prove that $L(n,1)$ is obtained by a distance one surgery from $L(5,1)$ only if $n=\pm 1$, $4$, $\pm 5$, $6$ or $\pm 9$, and $L(n,1)$ is obtained by a distance one surgery from $L(7,1)$ if and only if $n=\pm 1$, $3$, $6$, $7$, $8$ or $11$.
\end{abstract}

\maketitle

\section{Introduction}


Dehn surgery is a fundamental operation in 3-manifold topology that enables one to modify the shape of 3-manifolds. An outstanding conjecture by Berge, which was recently (partially) solved by Greene \cite{Greene}, lists all the possible lens spaces that can be obtained by a Dehn surgery along a knot in the 3-sphere. Instead of the 3-sphere, a natural generalization of the above theorem is to list all the possible lens spaces that can be obtained by a Dehn surgery from other lens spaces.
The celebrated cyclic surgery theorem says that if the knot complement of a knot $K$ in a lens space is not a Seifert fiber space and $K$ admits another lens space surgery, then it must be a distance one surgery, that is, the surgery slope intersects the meridian of $K$ geometrically once. As Seifert fibered structures in lens spaces are well understood, we thus focus on distance one surgery between lens spaces.

In this paper, we are specifically concerned with distance one surgeries between the lens space $L(p,1)$ with $p \geq 5$ prime and lens spaces of type $L(n,1)$ for $n \in \mathbb{Z}$. This question is also motivated from DNA topology.  Recall that in biology, circular DNA can be modeled as a knot or link, and torus knots or links $T(2,n)$ are a family of DNA knot or link occurring frequently in biological experiments. Additionally, there exist enzymatic complexes that mediate DNA recombination, during which strands of DNA are exchanged and the topology of the DNA molecule may be altered in the process. To better understand the mechanism of DNA recombination, band surgery is used to model these enzymatic actions. Here {\it band surgery} on a knot or link $L$ is defined as follows: embed an unit square $I \times I$ into $S^3$ by $b:I \times I \rightarrow S^3$ such that $L \cap (I \times I)=b(\partial I \times I)$, then replace $L$ by $L'= (L-b(\partial I \times I)) \cup b(I \times \partial I)$. A fruitful technique of studying band surgery between knots or links is by lifting to their double branched covers. The double branched cover of $T(2,n)$ is the lens space $L(n,1)$.  As a consequence of the Montesinos trick, band surgeries on knots and links lift to distance one Dehn surgeries in their double branched covers. This explains the biological motivation to study distance one surgeries between lens spaces of type $L(n,1)$.  Finally, we remark that it is due to technical reasons that we only consider surgeries from $L(p,1)$ with prime number $p$: In such cases, any homologically essential knot $K$ in $L(p,1)$ is primitive, which makes $\underline{\rm Spin^c}(L(p,1),K)$ easier to study.  

Now, we list our main results.  The first theorem gives a complete answer when $n$ is even.
\begin{theorem}
\label{theoremeven}
The lens space $L(n,1)$ with $n$ even is obtained from a distance one surgery along a knot in $L(p,1)$ with $p \geq 5$ prime if and only if $n$ is $p+1$ or $p-1$.
\end{theorem}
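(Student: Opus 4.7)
The plan is to prove the two directions separately.

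For the ``if'' direction I give an explicit construction. Consider in $S^3$ the two-component framed link $U_1 \cup U_2$, where $U_1, U_2$ are unknots with linking number one and framings $p$ and $\varepsilon \in \{\pm 1\}$, respectively. Surgery along $U_1$ produces $L(p,1)$, and $U_2$ descends to a knot $K \subset L(p,1)$ with unchanged meridian, so the $\varepsilon$-framed surgery on $K$ is distance one from its meridian. Blowing down the $\varepsilon$-framed unknot $U_2$ transforms the diagram into the single $(p-\varepsilon)$-framed unknot in $S^3$, whose surgery is $L(p-\varepsilon, 1)$. Taking $\varepsilon = \pm 1$ realises both $L(p-1,1)$ and $L(p+1,1)$.

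For the ``only if'' direction, suppose $K \subset L(p,1)$ admits a distance one surgery producing $L(n,1)$ with $n$ even, and let $W$ be the associated two-handle cobordism, with $\partial W = L(p,1) \sqcup -L(n,1)$, $\chi(W) = 1$ and $H_2(W) = \mathbb{Z}$. Since $p$ is prime, $[K] \in H_1(L(p,1)) \cong \mathbb{Z}/p$ is either $0$ (nullhomologous) or a generator (primitive). A Mayer--Vietoris argument then shows that in the primitive case $H_1(W) = 0$ and the intersection form of $W$ has determinant $\pm pn$, so in particular $\gcd(p,n) = 1$; while in the nullhomologous case $H_1(W) = \mathbb{Z}/p$ and $n = \pm p\,a$ for some integer $a$ coprime to $p$.

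To pin down $n$ I apply the Ozsv\'ath--Szab\'o $d$-invariant inequality: for every spin$^c$ structure $\mathfrak{s}$ on $W$ with restrictions $\mathfrak{s}_1, \mathfrak{s}_2$ to the two boundary components,
\begin{equation*}
c_1(\mathfrak{s})^2 - 2\chi(W) - 3\sigma(W) \;\leq\; 4\,d(L(p,1), \mathfrak{s}_1) - 4\,d(L(n,1), \mathfrak{s}_2).
\end{equation*}
The primality of $p$ ensures that in the primitive case every spin$^c$ structure on $L(p,1)$ is realised as a restriction from $W$, so the inequality can be tested against all of them. The parity of $n$ is used through the existence of a spin structure on $L(n,1)$, which yields a conjugation symmetry on the right-hand side that sharpens the bound. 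Substituting the recursive formula of Ozsv\'ath--Szab\'o for lens-space $d$-invariants and analysing the leading-order asymptotics of both sides produces an a priori bound on $|n|$, reducing the problem to a finite check. This finite case analysis, which is the main obstacle, pins down the primitive case to $n = p \pm 1$ and rules out the nullhomologous case entirely.
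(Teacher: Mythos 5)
Your ``if'' direction is correct and takes a different, more elementary route than the paper. The paper exhibits explicit band surgeries between $T(2,p)$ and $T(2,p\pm1)$ and lifts them to the double branched cover (which they also need for Corollary \ref{corbandsurgery}); your Kirby-calculus argument --- blowing down a $\pm1$-framed unknot linking the $p$-framed unknot once --- realises the same distance-one surgeries directly.

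The ``only if'' direction, however, has a genuine gap. The decisive ingredient in the paper is Lemma \ref{lemma1}: when $|n|=|H_1(L(n,1))|$ is even, the two-handle cobordism $W$ is itself \emph{Spin}. This is proved by gluing $W$ to a negative-definite Spin plumbing $X$ with $\partial X = L(p,1)$, noting that $L(p,1)$ is a $\mathbb{Z}/2$-homology sphere and $H_1(W)$ has no $2$-torsion, and reading off the parity of the resulting intersection form. With $W$ Spin and both ends L-spaces, Lemma \ref{lemmaLMV} (Lidman--Moore--Vazquez) gives the exact \emph{equality} $d(L(n,1),\mathfrak{t}')-d(L(p,1),\mathfrak{t})=-\tfrac14$ when $b_2^+(W)=1,b_2^-(W)=0$ (and the reversed version otherwise). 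Since the Spin structure restricts to the self-conjugate spin$^c$ structure on each end ($i=0$ on $L(p,1)$, and $i=0$ or $|n|/2$ on $L(n,1)$), plugging in Equation \eqref{eq2} immediately forces $n=p-1$ (resp.\ $n=p+1$). Your proposal instead invokes only the general Ozsv\'ath--Szab\'o inequality and the existence of a Spin structure \emph{on $L(n,1)$} --- but the latter is automatic for any lens space and does not establish that $W$ is Spin, which is what the whole argument hinges on. Moreover, the one-sided inequality at $c_1(\mathfrak{s})=0$ only yields (say) $d(L(p,1),0)-d(L(n,1),\mathfrak{t}')\ge \tfrac14$; when $\mathfrak{t}'$ corresponds to $|n|/2$ this bound reads $p/4 \ge 1/4$ and imposes \emph{no} constraint on $n$, so the ``leading-order asymptotics plus finite case analysis'' you sketch would never terminate. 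Finally, you also need to rule out $b_2^+(W)=b_2^-(W)=0$; the paper does this by computing the (nondegenerate, indefinite) intersection form of $W$ glued to a $p$-framed two-handle filling of $L(p,1)$, and some such argument must appear before the $d$-invariant step can be applied.
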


The case for  an odd integer $n$ is more challenging.  Recall that $H_1(L(p,1))=\mathbb{Z}/p$.  Although every nonzero element is a generator of this cyclic group, there is a special element $[c]$ in $\mathbb{Z}/p$ that is given by the core of the either solid torus in the standard genus-$1$ Heegaard splitting of $L(p,1)$. 
Our theorem is divided into 3 parts according to the different homology classes that $K$ represents.

\begin{theorem}
\label{theoremgen}
Let $K$ be a knot in $Y=L(p,1)$ with $p \geq 5$ prime.
\begin{enumerate}
\item[(\romannumeral1)] Suppose $K$ is null-homologous.  The lens space $L(n,1)$ with $n$ odd is obtained by a distance one surgery along $K$ if and only if $n=p$, or $p=5$ and $n=-5$.
\item[(\romannumeral2)] Suppose $K$ is a homologically essential knot in $Y$ with $[K]=1 \cdot [c] \in H_1(Y)$.  The lens space $L(n,1)$ with $n$ odd is obtained by a distance one surgery along $K$ only if $n=\pm 1$ or $p=5$ and $n=-9$.
\item[(\romannumeral3)] Suppose $K$ is a homologically essential knot in $Y$ with $[K]=k \cdot [c] \in H_1(Y)$ and $k>1$.  If $L(n,1)$ with $n$ odd is obtained by a distance one surgery along $K$, then the slope is $(m \mu+\lambda)$ and $m < k + 3$.
\end{enumerate}
\end{theorem}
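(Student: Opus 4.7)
The plan is to use the Heegaard Floer correction term ($d$-invariant) obstruction coming from the 2-handle surgery cobordism. A distance one surgery from $Y=L(p,1)$ to $L(n,1)$ along $K$ produces a smooth cobordism $W$ with $b_2(W)=1$. Since the intersection form is rank one, after reversing orientation if necessary we obtain a negative definite cobordism, to which the standard Ozsv\'ath--Szab\'o inequality
\begin{equation*}
\frac{c_1(\mathfrak{t})^2 + 1}{4} \;\leq\; d(L(n,1), \mathfrak{s}') - d(L(p,1), \mathfrak{s})
\end{equation*}
applies for every spin$^c$ extension $\mathfrak{t}$ of $(\mathfrak{s},\mathfrak{s}')$ to $W$. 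Because $p$ is prime, any homologically essential $K$ of class $k[c]$ has $\gcd(k,p)=1$ and so is primitive; consequently $\underline{\mathrm{Spin^c}}(Y,K)$ is an affine $\mathbb{Z}$-torsor that cleanly parametrizes the spin$^c$ structures on $W$. Combined with the explicit closed-form formula for the $d$-invariants of lens spaces, the inequality becomes a diophantine system.

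For part (i), $W$ has trivial rational $H_2$ so the homology count forces $|H_1(L(n,1))|=p$, i.e.\ $n=\pm p$. The case $n=p$ is realized by the trivial surgery on an unknot in $Y$. The case $n=-p$ is ruled out by the $d$-invariant inequality except when $p=5$, where an explicit example yields the exceptional value. For part (ii), with $[K]=[c]$, the Mayer--Vietoris calculation on $W$ expresses $n$ as a linear function of $m$, $p$, and the framing, which pins $n$ modulo $p$ to a short list. The correction term inequality then bounds $m$ to a finite range, and a direct enumeration of spin$^c$ extensions yields $n=\pm 1$, together with the exceptional $n=-9$ when $p=5$.

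For part (iii), we argue by contradiction: assume $m \geq k+3$. The self-intersection of the natural generator of $H_2(W;\mathbb{Q})$ depends linearly on $m$ and quadratically on $k$, and the values of $c_1(\mathfrak{t})^2$ across the (finitely many) spin$^c$ extensions of any fixed $(\mathfrak{s},\mathfrak{s}')$ grow quadratically in $m/k$. Meanwhile the spread of $d$-invariants on either $L(p,1)$ or $L(n,1)$ is controlled linearly by $p$ and $n$, and the identities pinning $n$ in terms of $m,k,p$ keep the right-hand side from growing fast enough to absorb the $c_1^2$ term once $m \geq k+3$. Choosing the extension that minimizes $c_1^2$ in absolute value then produces a violation of the correction term inequality.

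The main obstacle is expected to be part (iii): while parts (i) and (ii) reduce to a small finite check, the bound $m<k+3$ requires a uniform analysis over the infinite family $(m,k,p)$. The delicate point is identifying the spin$^c$ extension that minimizes $|c_1(\mathfrak{t})^2|$ and showing that even this optimal extension is ``too large'' once $m\geq k+3$; the precise additive constant $+3$ reflects the maximum slack $\max d - \min d$ available on the two lens spaces and must be extracted by a careful arithmetic argument on the parametrization of $\underline{\mathrm{Spin^c}}(Y,K)$.
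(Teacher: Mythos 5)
Your plan is at a high level in the same family of techniques (Heegaard Floer $d$-invariants as an obstruction), but the specific tool you propose, the negative-definite cobordism inequality
$c_1(\mathfrak{t})^2 + 1 \leq 4\bigl(d(L(n,1),\mathfrak{s}') - d(L(p,1),\mathfrak{s})\bigr)$,
is substantially weaker than what is actually needed, and there are several concrete gaps.

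First, the homology count in part (i) is wrong. For a null-homologous knot $K\subset L(p,1)$, $m$-surgery yields a manifold with $H_1 \cong \mathbb{Z}/p\oplus\mathbb{Z}/m$, so the candidate lens spaces are $L(\pm pm,1)$ for odd $m$, not just $n=\pm p$. Ruling out $|m|\geq 3$ is most of the work in part (i); it does not follow from homology, and the paper does it via Propositions~\ref{propofnull1}--\ref{propofnull4}.

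Second, the cobordism inequality is one-sided and is essentially equivalent to the non-negativity $V_{\xi_0}\geq 0$. In several of the cases you must rule out (e.g., the analogues of the paper's Propositions~\ref{propofnull3} and~\ref{propofnull4}, and all of parts (ii) and (iii) in the regime where $V_{\xi_0}$ turns out to be a large positive integer), the inequality is satisfied with plenty of room and yields no contradiction. What actually kills those cases in the paper is the exact surgery formula $d(Y',\mathfrak{t}) = d(M,\mathfrak{t}_M) - 2V_{\xi_0}$ combined with the monotonicity $V_{\xi_0}-1 \leq V_{\xi_0+PD[\mu]}\leq V_{\xi_0}$ and the fact that the $V$'s are integers. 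These produce two quadratic Diophantine equations in an auxiliary parameter $j$, and one shows neither has an integer root in the relevant range. No amount of optimizing over $\mathrm{spin}^c$ extensions in a one-sided inequality recovers this integrality obstruction.

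Third, and most importantly for parts (ii) and (iii), your proposal has no mechanism to compute the relevant $d$-invariants on the surgered side accurately enough. Because $K$ is only rationally null-homologous, one cannot simply plug into the lens-space $d$-invariant formula with a parametrization of $\mathrm{spin}^c$ extensions on $W$; the mapping-cone grading shift is a genuine unknown that depends only on the homology class $k[c]$, and the paper pins it down by comparing with the simple knot $K(p,1,k)$ in the same class. Surgery on that simple knot produces a Seifert fiber space $M(0,0;(m-k,1),(p-k,1),(k,1))$, generically not a lens space, and its $d$-invariants must be extracted via Ozsv\'ath--Szab\'o's plumbed-manifold algorithm (Lemmas~\ref{maximiser},~\ref{correpondingcovoter}). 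Your heuristic that $c_1^2$ ``grows quadratically in $m/k$'' while the $d$-invariant spread is linear is not a proof and does not isolate the exact threshold $m<k+3$; that constant comes out of the explicit Seifert $d$-invariant computations and the resulting quadratics. In short, the proposal omits the paper's central technical contribution (the $d$-invariant surgery formula of Propositions~\ref{prop ratinoal d-inv1} and~\ref{prop ratinoal d-inv2} for homologically essential knots, deduced from the mapping cone together with simple knots), and without it parts (ii) and (iii) cannot be closed by the cobordism inequality alone.
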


As the crossing numbers of DNA knots or links are often small, we also give some results about the lens spaces $L(5,1)$ and $L(7,1)$.

\begin{theorem}
\label{theorem57}
\quad\vspace{0.3mm}
\begin{enumerate}
\item[(\romannumeral1)] The lens space $L(n,1)$ is obtained by a distance one surgery from $L(5,1)$ only if $n=\pm 1$, $4$, $\pm 5$, $6$ or $\pm 9$.
\item[(\romannumeral2)] The lens space $L(n,1)$ is obtained by a distance one surgery from $L(7,1)$ if and only if $n=\pm 1$, $3$, $6$, $7$, $8$ or $11$.
\end{enumerate}
\end{theorem}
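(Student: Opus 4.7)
The plan is to combine Theorems \ref{theoremeven} and \ref{theoremgen} with a Heegaard Floer $d$-invariant analysis for the necessity direction, and with explicit surgery constructions for the sufficiency in part (ii). I split cases by the parity of $n$ and by the homology class $[K] \in H_1(L(p,1)) = \mathbb{Z}/p$ of the surgery knot. The even case is settled at once by Theorem \ref{theoremeven}, which gives $n \in \{p-1, p+1\}$, i.e.\ $\{4,6\}$ for $p=5$ and $\{6,8\}$ for $p=7$. For odd $n$, Theorem \ref{theoremgen}(i) contributes $n = p$ (and $n = -5$ when $p=5$) and Theorem \ref{theoremgen}(ii) contributes $n = \pm 1$ (and $n = -9$ when $p=5$). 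Together these account for the entire list in (i) modulo ruling out part (iii), and they account for the list in (ii) except for the two realizations $n = 3$ and $n = 11$.

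The remaining necessity work handles Theorem \ref{theoremgen}(iii) with $[K] = k[c]$ for $2 \leq k \leq p-1$. By orientation reversal I reduce to $2 \leq k \leq (p-1)/2$, so $k = 2$ for $p=5$ and $k \in \{2,3\}$ for $p=7$. The bound $m < k+3$, combined with the homological identity $|H_1(Y_{m\mu+\lambda}(K))| = |pm - k^2|$ after choosing an appropriate representative of the knot class, produces a short finite list of candidate odd values of $|n|$. For $p=5, k=2$ the small candidates include $|n| \in \{1, 9, 11\}$, of which $n = \pm 11$ must be excluded; analogous lists appear for $p = 7$ with $k = 2, 3$, and one has to exclude, among others, $n = 9$, $13$, and $\pm 19$ while keeping $n = 3$ and $n = 11$.

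The main obstacle is the $d$-invariant analysis used to rule out these spurious candidates. A distance one Dehn surgery provides a two-handle cobordism $W$ from $L(p,1)$ to $L(n,1)$ with $H_2(W) = \mathbb{Z}$, and the Ozsv\'ath--Szab\'o correction term inequalities, applied to the explicitly known $d$-invariants of lens spaces under every compatible matching of $\mathrm{Spin}^c$ structures, give a finite system of rational constraints parametrized by $(k, m)$. For each spurious $n$ I expect to verify that no matching is admissible; the delicate point is that the matching of $\mathrm{Spin}^c$ structures on the two lens spaces is governed by the linking-form data determined by $k$ and $m$, so the argument must track this pairing explicitly. I also need a lower bound on $m$ to keep the enumeration finite, which can be obtained either by applying the same obstruction to the orientation-reversed cobordism or from the growth of lens space $d$-invariants with $|n|$.

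For the sufficiency in part (ii) I must exhibit distance one surgeries from $L(7,1)$ yielding $L(3,1)$ and $L(11,1)$; the other entries of the list are realized by constructions implicit in Theorems \ref{theoremeven} and \ref{theoremgen}. A natural route is to draw band surgeries from $T(2,7)$ to $T(2,3)$ and to $T(2,11)$ by direct manipulation of the torus knot diagrams and lift to the double branched cover; the target slopes suggested by the homological formula above are $m=1$ (giving $|7m-4|=3$) and $m=-1$ (giving $|7m-4|=11$) in the $k=2$ family. Producing these bands by inspection, and separately verifying that the $d$-invariant bookkeeping excludes $n=\pm 11$ for $p=5$ while admitting $n=11$ for $p=7$, will be the most computation-intensive parts of the argument.
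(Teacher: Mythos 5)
Your high-level decomposition is correct: Theorem \ref{theoremeven} handles even $n$, Theorem \ref{theoremgen}(i)--(ii) handles the homology classes $k=0,1$, and the remaining work is to obstruct the spurious odd candidates coming from $k=2$ (for $p=5$) and $k\in\{2,3\}$ (for $p=7$), while exhibiting band surgeries for the realizations in part (ii). However, the core technical step is proposed with the wrong tool, and this is a genuine gap.

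You suggest obstructing the spurious $n$ by applying ``Ozsv\'ath--Szab\'o correction term inequalities'' to the 2-handle cobordism $W: L(p,1)\to L(n,1)$ over all compatible $\mathrm{Spin}^c$ matchings. For odd $n$ the cobordism is not Spin (Lemma \ref{lemma1}), so Lemma \ref{lemmaLMV} does not apply, and the generic negative-(semi)definite inequality $d(Y',\mathfrak{s}|_{Y'}) - d(Y,\mathfrak{s}|_Y) \geq \tfrac{c_1(\mathfrak{s})^2 + 1}{4}$ is a one-sided bound that is far too weak to exclude the remaining candidates; it does not capture the knot Floer data of $K$. The paper's proof instead relies crucially on the $d$-invariant \emph{surgery formula} for homologically essential knots (Propositions \ref{prop ratinoal d-inv1} and \ref{prop ratinoal d-inv2}), derived from the mapping cone formula, which gives an \emph{equality} $d(Y',\mathfrak{t}) = d(M,\mathfrak{t}_M) \mp 2V_{\xi_0}$ together with the companion equality at $\mathfrak{t}+i^{\ast}PD[\mu]$ and the monotonicity constraint $V_{\xi_0}-1 \leq V_{\xi_0+PD[\mu]} \leq V_{\xi_0}$. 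Using this, one extracts a quadratic equation in the $\mathrm{Spin}^c$-label $j$ of $L(n,1)$ and shows it has no integral root in the admissible range, uniformly for all $m$ in the relevant (infinite) range. Your proposal does not address how to produce such a uniform obstruction, only acknowledging that a lower bound on $m$ is needed without supplying it.

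Moreover, applying the surgery formula requires computing $d(M,\mathfrak{t}_M)$ and $d(M,\mathfrak{t}_M+i^{\ast}PD[\mu])$ for the Seifert fibered spaces $M = M(0,0;(m-k,1),(p-k,1),(k,1))$, which is done via Ozsv\'ath--Szab\'o's plumbing algorithm: one must build the negative-definite plumbing, enumerate the maximizing characteristic covectors, and identify which covector corresponds to the self-conjugate $\mathrm{Spin}^c$ structure and to its shift by $PD[\mu]$ (Lemmas \ref{maximiser}, \ref{correpondingcovoter}, and their analogues in Sections \ref{L(5,1)}, \ref{L(7,1)}). Your proposal does not mention this ingredient. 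Finally, the paper needs one additional obstruction not mentioned in your plan: the case $p=7$, $k=3$, $m=0$, $n=9$ is not amenable to the $d$-invariant argument and is instead ruled out by a linking form computation showing $-4$ is not a quadratic residue mod $9$. You should also note that, for the $L(3,1)$ realization in part (ii), the paper obtains the surgery from the simple knot $K(7,1,2)$ and rules out $L(-3,1)$ via the Lidman--Moore--Vazquez classification of distance one surgeries on $L(3,1)$ after reversing orientation; finding an explicit band from $T(2,7)$ to $T(2,3)$ by inspection is not automatic.
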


The above theorems about distance one surgeries plus the band surgeries we construct in Figure \ref{bandsurgery} readily imply the following corollaries about band surgeries once we lift to the double branched covers.

\begin{corollary}
\label{corbandsurgery}
\quad\vspace{0.3mm}
\begin{enumerate}
\item[(\romannumeral1)] The torus link $T(2,n)$ is obtained by a band surgery from $T(2,5)$ only if $n=\pm 1$, $4$, $\pm 5$, $6$ or $\pm 9$.
\item[(\romannumeral2)] The torus link $T(2,n)$ is obtained by a band surgery from $T(2,7)$ if and only if $n=\pm 1$, $3$, $6$, $7$, $8$ or $11$.
\item[(\romannumeral3)] The torus link $T(2,n)$ with $n$ even is obtained from $T(2,p)$ with $p \geq 5$ prime by a band surgery if and only if $n$ is $p+1$ or $p-1$.
\end{enumerate}
\end{corollary}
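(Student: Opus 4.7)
The strategy is to translate the corollary entirely to a question about distance one Dehn surgery between lens spaces, and then invoke the theorems already proved. Recall the standard fact from the Montesinos trick: the double branched cover of $T(2,n)$ is the lens space $L(n,1)$, and if $L'$ is obtained from $L$ by a band surgery then $\Sigma_2(L')$ is obtained from $\Sigma_2(L)$ by a distance one Dehn surgery along the preimage of the core of the band. Thus every band surgery between two-bridge torus links lifts to a distance one lens space surgery.

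This dictionary immediately disposes of every ``only if'' direction. In part (\romannumeral1), a band surgery from $T(2,5)$ to $T(2,n)$ would lift to a distance one surgery from $L(5,1)$ to $L(n,1)$, so Theorem~\ref{theorem57}(\romannumeral1) restricts $n$ to the listed set. The forward direction of (\romannumeral2) follows identically from Theorem~\ref{theorem57}(\romannumeral2). For the forward direction of (\romannumeral3), which concerns $n$ even, we invoke Theorem~\ref{theoremeven} to conclude that $n$ must be $p \pm 1$.

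For the reverse directions in (\romannumeral2) and (\romannumeral3) one has to exhibit, case by case, an actual band realizing the surgery. These constructions are precisely the content of Figure~\ref{bandsurgery}: for each admissible target $n \in \{\pm 1, 3, 6, 7, 8, 11\}$ in (\romannumeral2), and for the two targets $n = p \pm 1$ in (\romannumeral3), the figure displays a band whose attachment turns $T(2,7)$, respectively $T(2,p)$, into $T(2,n)$. The verification is entirely diagrammatic: after attaching the band one performs an obvious isotopy and reads off the index of the resulting torus link from its signed crossing count.

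The only real obstacle here is the combinatorial bookkeeping of the band constructions; the substantive analytic work is already packaged inside Theorems~\ref{theoremeven} and~\ref{theorem57}. In particular, for part (\romannumeral3) a single construction, drawn once for generic $p$, simultaneously produces both $T(2,p+1)$ and $T(2,p-1)$ and so covers the whole prime family uniformly.
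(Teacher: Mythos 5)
Your approach matches the paper's: the corollary is deduced by lifting band surgeries to distance one Dehn surgeries on double branched covers (the Montesinos trick), invoking Theorems~\ref{theoremeven} and~\ref{theorem57} for the obstructions, and pointing to Figure~\ref{bandsurgery} for the realizations. This is exactly how the paper dispenses with the corollary in a single sentence.

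One small inaccuracy worth correcting: you assert that Figure~\ref{bandsurgery} ``displays a band'' for each admissible target $n \in \{\pm 1, 3, 6, 7, 8, 11\}$ in part~(\romannumeral2), but the five subfigures (for $p=7$) only produce $T(2,7)$, $T(2,11)$, $T(2,6)$, $T(2,8)$, and the unknot --- there is no explicit $T(2,7) \to T(2,3)$ band drawn. The $n=3$ case is obtained either by applying the band of Figure~\ref{bandsurgery5} with $p=3$ to go $T(2,3)\to T(2,7)$ and then using the fact that band surgery is symmetric, or by observing (as the paper does in case~(3) of the proof of Theorem~\ref{theorem57}(\romannumeral2)) that distance one surgery on the simple knot $K(7,1,2)$ yields $L(3,1)$, and that simple knots are equivariant under the covering involution, so the surgery descends to a band surgery on $T(2,7)$. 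Your proof should either name one of these routes for $n=3$ or weaken the sentence about what the figure literally contains; as written it overstates the figure's content. That caveat aside, the logical structure of the argument is sound and coincides with the paper's.
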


\begin{figure}[!h]
\centering
\subfigure[Band surgery from $T(2,p)$ to $T(2,p)$]{\label{bandsurgery1}
\includegraphics[width=0.375\textwidth]{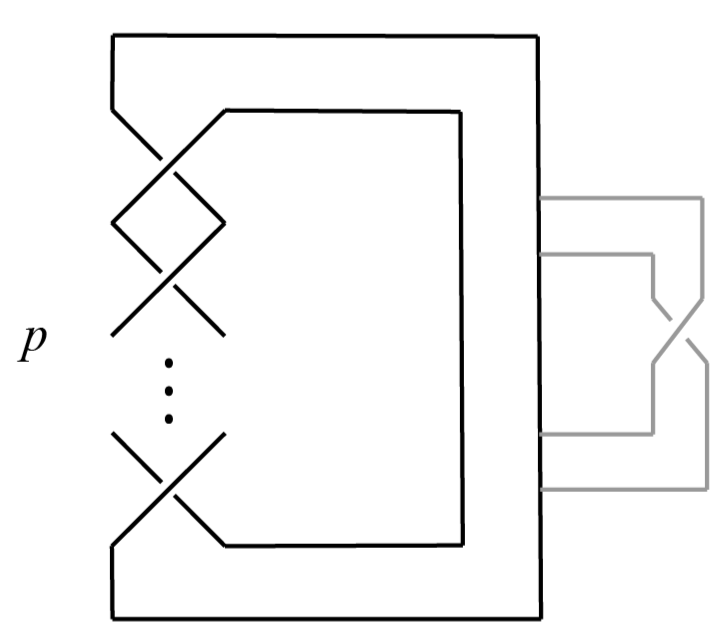}}\qquad
\subfigure[Band surgery from $T(2,p)$ to $T(2,p+4)$]{\label{bandsurgery5}
\includegraphics[width=0.38\textwidth]{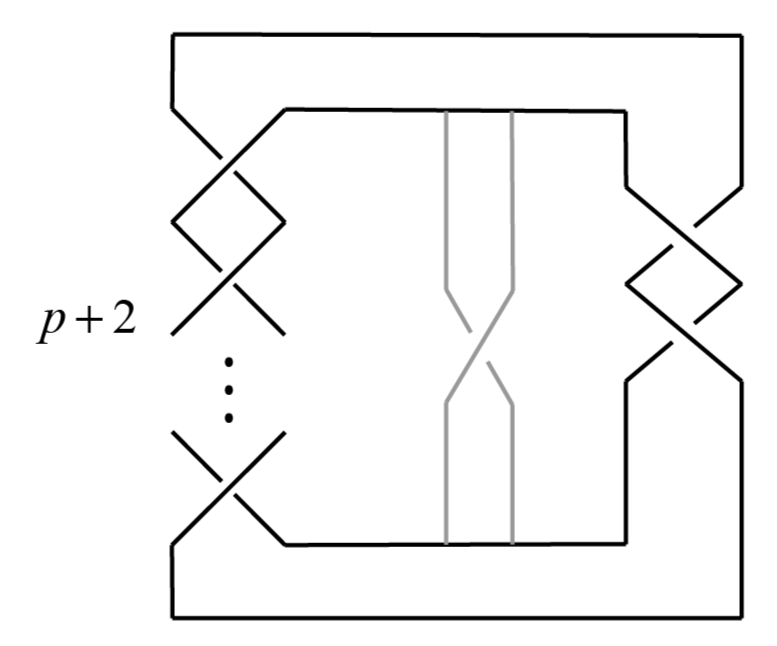}}\\
\subfigure[Band surgery from $T(2,p)$ to $T(2,p-1)$]{\label{bandsurgery2}
\includegraphics[width=0.26\textwidth]{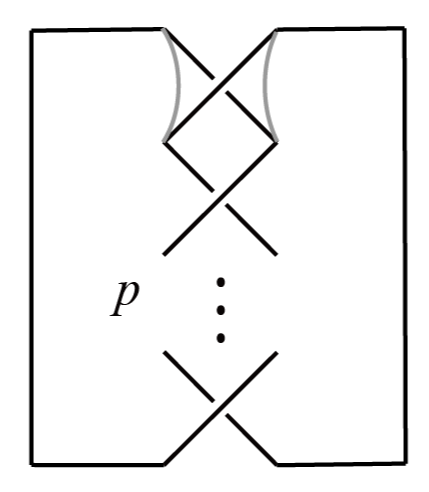}}\qquad
\subfigure[Band surgery from $T(2,p)$ to $T(2,p+1)$]{\label{bandsurgery3}
\includegraphics[width=0.22\textwidth]{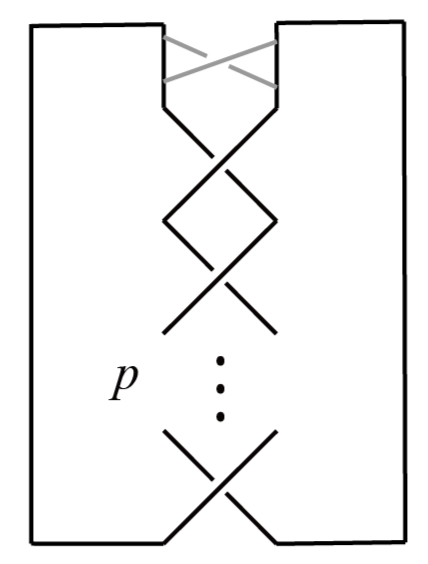}}\qquad
\subfigure[Band surgery from $T(2,p)$ to the unknot]{\label{bandsurgery4}
\includegraphics[width=0.255\textwidth]{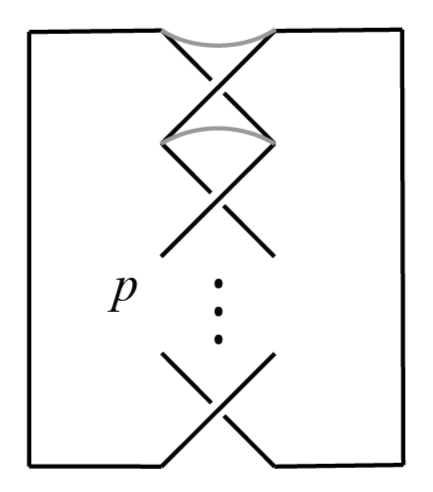}}
\caption{Examples of band surgeries, which lift to distance one surgery in double branched covers.}\label{bandsurgery}
\end{figure}

We now explain the connection and compare the methods of our paper with the existing ones in this direction.   In \cite{LMV}, Lidman, Moore and Vazquez classified distance one surgeries on $L(3,1)$ and the corresponding band surgeries on trefoil knot $T(2,3)$. A knot in $L(3,1)$ is either null-homologous or homologically essential. For null-homologous knots, they simply need to apply the $d$-invariant surgery formula essentially due to \cite{NiWu}.  For homologically essential knots, they have to work harder to first deduce a $d$-invariant surgery formula for $L(3,1)$ and then apply it to obstruct distance one surgeries between $L(3,1)$ and $L(n,1)$.  In our paper, we further generalize their $d$-invariant surgery formula for homologically essential knots in $L(3,1)$ to a knot in $L(p,1)$ with $p \geq 5$ prime. Then we use our new $d$-invariant surgery formula for homologically essential knots and the old formula for null-homologous knots to obstruct those pairs of lens spaces that are not arisen from the double branch cover of the knot pairs related by band surgeries exhibited in Figure \ref{bandsurgery}.


Like the proof of the $d$-invariant surgery formula for $S^3$, the key points to deduce the $d$-invariant surgery formula for homologically essential knots in $L(p,1)$ are: (1) Choose a special (relative) $\rm Spin^c$ structure so that we can find the element of minimal grading in the mapping cone. (2) Use a knot with simple knot Floer complex to fix the grading shift of the mapping cone. For (1), we choose the same relative $\rm Spin^c$ structure $\xi_0$ as in \cite{LMV}, which has a nice symmetric property shown in Lemma \ref{propsym}. With this property, we can easily trace the place where the minimal grading is supported.
For (2), there are multiple homology classes of knots in $L(p,1)$ instead of a single nontrivial class in $L(3,1)$ up to symmetry. For each of these homology classes, we use a so-called {\it simple knot} in $L(p,1)$ to fix the grading shift of the mapping cone.  In many cases, surgeries along simple knots in $L(p,1)$ produce a Seifert fiber space instead of a lens space, so we also need to deal with the computation of $d$-invariants of a Seifert fiber space, which is a substantial amount of extra work compared to \cite{LMV}.

This paper is structured as follows: Section \ref{Preliminaries} provides some preliminaries including homological analysis and basic properties of the $d$-invariant. Section \ref{Surgeries along null-homologous knots} introduces the $d$-invariant surgery formula for null-homologous knots and uses this formula to study distance one surgery along null-homologous knots in $L(p,1)$. In Section \ref{$d$-invariant surgery formula for homologically essential knots}, we deduce the $d$-invariant surgery formula for homologically essential knots in $L(p,1)$ from the mapping cone formula. In Section \ref{Surgeries along homologically essential knots}, we use our $d$-invariant surgery formula to study distance one surgeries along homologically essential knots in $L(p,1)$. Finally, we study distance one surgeries on the lens spaces $L(5,1)$ and $L(7,1)$ in Section \ref{Distance one surgeries on $L(5,1)$ and $L(7,1)$}.

The authors are partially supported by grant from the Research Grants Council of Hong Kong Special
Administrative Region, China (Project No. 14309016 and 14301317).

\section{Preliminaries}
\label{Preliminaries}
\subsection{Homological analysis}
\label{Homological analysis}
We adopt the convention that the lens space $L(p,q)$ is obtained from $p/q$-surgery of the unknot in $S^3$. Let $K$ be a knot in $Y=L(p,1)$ with $p \geq 5$ prime. Then the homology class of $K$ in $Y$ is either trivial or a generator of $H_1(Y)$. In the former case $K$ is called \emph{null-homologous}, and in the latter case $K$ is called \emph{homologically essential}.

When $K$ is null-homologous, there is a canonical way to fix the meridian and the longitude of $K$. Denote by $Y_{m}(K)$ the manifold obtained from $m$-surgery along $K$. We have $H_1(Y_{m}(K))=\mathbb{Z}/p \oplus \mathbb{Z}/m$.


When $K$ is homologically essential, there are $\frac{p-1}{2}$ different homology classes of $K$ in $H_1(Y)=\mathbb{Z}/p$ up to symmetry. We represent $Y$ by a Kirby diagram with an unknot $U$ with framing $p$ in $S^3$; hence $Y=V_0\cup V_1$ where $V_0$ denotes the solid torus that is the complement of $U$, and $V_1$ denotes the solid torus that is glued on. Fix an orientation of the unknot $U$. Let $c$ be the core of $V_0$, and we fix an orientation of $c$ such that the linking number of $c$ and $U$ equals 1. Then we can choose an orientation of $K$ such that $[K]=k[c] \in H_1(L(p,1))$ for some integer $ 1 \leq k \leq \frac{p-1}{2}$.
We call this $k$ the \emph{mod p-winding number}, or simply the \emph{winding number} of $K$. By possibly handlesliding $K$ over $U$ in the Kirby diagram, which is equivalent to isotopying $K$ in $Y$ over the meridian of $V_1$, we may further assume that the linking number of $K$ and $U$ is exactly $k$.  We fix our meridian $\mu$ and longitude $\lambda$ for $K$ by regarding $K$ as a component of the link consisting of $K$ and $U$ in $S^3$. Also we fix the meridian $\mu_0$ and longitude $\lambda_0$ for the unknot $U$. Then the first homology
$$H_1(S^3-U-K)=\mathbb{Z} \langle \mu_0 \rangle \oplus \mathbb{Z} \langle \mu \rangle \quad {\rm and} \quad \left \{\begin{matrix}
[\lambda_0]=k \cdot [\mu]\\
[\lambda]=k \cdot [\mu_0]
\end{matrix}\right. .$$
Therefore, the first homology of the knot complement $Y-K$ is
$$H_1(Y-K)=H_1(S^3-U-K)/ \langle p\mu_0 + \lambda_0 \rangle= H_1(S^3-U-K)/ \langle p\mu_0 + k \mu \rangle .$$
Let $\theta= p' \mu_0+k'\mu$, where $pk'-kp'=1$, then $H_1(Y-K)=\mathbb{Z} \langle \theta \rangle$. One may check that
\begin{equation}
[\mu]=p[\theta] \in H_1(Y-K),
\end{equation}
\begin{equation}
[\lambda]=-k^2[\theta] \in H_1(Y-K).
\end{equation}
Since we are interested in distance one surgery, we only consider $(m \cdot \mu + \lambda)$-surgery. Denote by $Y_{m \cdot \mu+ \lambda}(K)$ the surgered manifold of $(m \cdot \mu + \lambda)$-surgery along $K$. Then the first homology
\begin{equation} \label{firsthomology}
H_1(Y_{m \cdot \mu+ \lambda}(K))=\mathbb{Z}/|pm-k^2|.
\end{equation}
For simplicity, we will also refer the above surgery as the $m$-surgery along $K$ with the understanding that $\mu$ and $\lambda$ are chosen as just described.

\medskip
The lemma below will be repeatedly used in the later sections.

\begin{lemma}
\label{lemma1}
Let $Y'$ be the manifold obtained by a distance one surgery from $Y=L(p,1)$ with $p \geq 5$ prime, and let $W: Y \rightarrow Y'$ be the associated cobordism. Then $|H_1(Y')|$ is even if and only if $W$ is Spin.
\end{lemma}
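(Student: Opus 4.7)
The plan is to translate ``$W$ is Spin'' into a numerical parity condition via Wu's formula, and then read that parity off the surgery description. First I would identify $H_2(W)$: since $W$ is built from $Y\times I$ by attaching one $2$-handle along $K$, the long exact sequence of $(W,Y)$ (using $H_2(Y)=0$) realizes $H_2(W)$ as the kernel of $\partial\colon H_2(W,Y)\to H_1(Y)$, where $H_2(W,Y)\cong\mathbb{Z}$ is generated by the $2$-handle core $D$ and $\partial[D]=[K]\in H_1(Y)\cong\mathbb{Z}/p$. If $K$ is null-homologous the map is zero; if $K$ is homologically essential with winding number $k$, then $\gcd(k,p)=1$ because $p$ is prime, so the map is surjective with kernel $p\mathbb{Z}$. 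Either way $H_2(W)\cong\mathbb{Z}$; let $\sigma$ be a generator.

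Next I would reduce Spin-ness to a parity of $\sigma\cdot\sigma$. Since $p$ is odd, $H_1(W)=H_1(Y)/\langle[K]\rangle$ is either $\mathbb{Z}/p$ or $0$, and thus has no $2$-torsion, so universal coefficients gives $H^2(W;\mathbb{Z}/2)\cong\mathbb{Z}/2$ (the Ext summand vanishes). By Wu's formula, the unique nonzero class in $H^2(W;\mathbb{Z}/2)$ pairs with $\sigma\bmod 2$ to give $\sigma\cdot\sigma\bmod 2$, so $w_2(W)=0$ if and only if $\sigma\cdot\sigma$ is even. Since $W$ is an oriented $4$-manifold, this is equivalent to $W$ being Spin.

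The final step is to compute $\sigma\cdot\sigma$. If $K$ is null-homologous, then $\sigma$ is represented by $D$ capped off by a Seifert surface of $K$ pushed into $Y\times I$, and a standard framing count (exactly as for integer surgery on a knot in $S^3$) gives $\sigma\cdot\sigma=m$; since $|H_1(Y')|=pm$ with $p$ odd, the parities match. If $K$ is homologically essential, I would present $Y$ as $p$-surgery on an unknot $U\subset S^3$ with $\mu,\lambda$ chosen as in Subsection~\ref{Homological analysis}, let $X_0$ be the trace of this surgery, and form $X=X_0\cup_Y W$. Then $X$ is the $2$-handlebody on $B^4$ attached along the link $U\cup K$ with linking matrix
\[
Q=\begin{pmatrix} p & k \\ k & m \end{pmatrix},
\]
so $H_2(X)=\mathbb{Z}\langle h_0,h_1\rangle$ with $h_i\cdot h_j=Q_{ij}$. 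The delicate part is locating $\sigma$ in $H_2(X)$ under the Mayer--Vietoris injection $H_2(W)\hookrightarrow H_2(X)$: a representative of $\sigma$ can be pushed into $\mathrm{int}(W)$ and a representative of $h_0$ into $\mathrm{int}(X_0)$, and these are then disjoint since $W\cap X_0=Y$, so $\sigma\cdot h_0=0$. Writing $\sigma=ah_0+bh_1$, orthogonality gives $ap+bk=0$, and the Mayer--Vietoris identification $H_2(X)/(H_2(X_0)\oplus H_2(W))\cong H_1(Y)=\mathbb{Z}/p$ forces $|b|=p$, whence $\sigma=\pm(-k h_0+p h_1)$ and $\sigma\cdot\sigma=p(pm-k^2)$. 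Since $|H_1(Y')|=|pm-k^2|$ by~\eqref{firsthomology} and $p$ is odd, $\sigma\cdot\sigma$ is even if and only if $|H_1(Y')|$ is even, completing the plan.
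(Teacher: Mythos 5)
Your proof is correct, and it takes a genuinely different route from the paper's. The paper attaches to $W$ a simply-connected, even, plumbed $4$-manifold $X$ with $\partial X=L(p,1)$, argues that the simply-connected union $X\cup W$ is Spin if and only if $W$ is (using that $L(p,1)$ is a $\mathbb{Z}/2$-homology sphere and $H_1(W)$ has no $2$-torsion), and then reads off Spin-ness from the parity of the intersection matrix $Q_{X\cup W}$. You instead work with $W$ directly: you verify $H_2(W)\cong\mathbb{Z}$ and that $H_1(W)$ has no $2$-torsion, invoke Wu's formula to reduce $w_2(W)=0$ to the parity of the self-intersection $\sigma\cdot\sigma$ of a generator $\sigma$ of $H_2(W)$, and compute $\sigma\cdot\sigma$ by embedding $W$ into the trace $X=X_0\cup_Y W$ of surgery on the two-component link $U\cup K\subset S^3$, locating $\sigma$ via the orthogonality $\sigma\cdot h_0=0$ together with the index-$p$ condition from Mayer--Vietoris. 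This yields $\sigma\cdot\sigma=m$ in the null-homologous case and $\sigma\cdot\sigma=p(pm-k^2)$ in the essential case, and since $p$ is odd these have the same parity as $|H_1(Y')|=|pm|$ and $|H_1(Y')|=|pm-k^2|$ respectively. Your route has the virtue that the parity identification tracks transparently through Equation~(\ref{firsthomology}); by contrast the paper's displayed matrix gives $\det Q_{X\cup W}=k^2(p-1)+mp$, which does not match $|pm-k^2|$ (e.g.\ $p=5$, $k=1$, $m=1$ gives $9$ versus $4$), so the off-diagonal linking entries in Figure~\ref{spinproof} would need correcting for the paper's bookkeeping to close; your computation side-steps that issue entirely. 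The only minor phrasing quibble in your write-up is the sentence ``the unique nonzero class in $H^2(W;\mathbb{Z}/2)$ pairs with $\sigma\bmod 2$ to give $\sigma\cdot\sigma\bmod 2$'': Wu's formula asserts this for $w_2(W)$, which may be zero; since the $\mathbb{Z}/2$-pairing is perfect and $\sigma$ is primitive, the conclusion $w_2(W)=0\iff\sigma\cdot\sigma\equiv 0\pmod 2$ still follows, as you intended.
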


\begin{figure}[!h]
\begin{minipage}{0.46\linewidth}
 \centerline{\includegraphics[width=4.3cm]{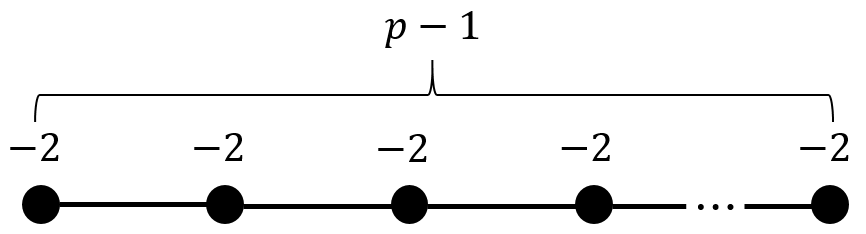}}
\end{minipage}
\vspace{7mm}
\begin{minipage}{.43\linewidth}
$Q_{X \cup W}=
\begin{bmatrix}
 -2 & 1 &    &       &        &         & a_1\\
 1  & -2& 1  &       &        &         & a_2\\
    & 1 & -2 &       &        &         & a_3\\
    &   &    & \ddots&        &         &\vdots\\
    &   &    &       &  -2    & 1       & a_{p-2}\\
    &   &    &       &    1   & -2      & a_{p-1}\\
 a_1&a_2& a_3& \dots & a_{p-2}& a_{p-1} & m
\end{bmatrix}_{p \times p}$
\end{minipage}
\caption{The plumbing diagram of $X$ and the intersection form of $X\cup W$}
\label{spinproof}
\end{figure}

\begin{proof}
We will make use of the fact that a 4-manifold whose first homology has no 2-torsion is Spin if and only if its intersection form is even. Consider a plumbed 4-manifold $X$ with the plumbing diagram depicted in Figure \ref{spinproof}, and $\partial X=L(p,1)$.  As $X$ is simply-connected and has even intersection form, it is a Spin 4-manifold. Attach the cobordism $W$ to $X$ along $L(p,1)$. Then the simply-connected 4-manifold $X \cup W$ is Spin if and only if $W$ is Spin, since $L(p,1)$ is a $\mathbb{Z}/2$ homology sphere, $H_1(W)$ has no 2-torsion and $X$ is Spin.
We thus compute the intersection form of $X \cup W$, $Q_{X\cup W}$ in Figure \ref{spinproof}, where $m$ represents the surgery coefficient on the knot $K$ and $a_i$'s are some integers.

%

We claim that ${\rm det}(Q_{X \cup W})$ is even if and only if $m$ is even.  We show this by expanding along the last row and then along the last column of the matrix $Q_{X \cup W}$.  To this effect, let $Q_i$ denote the $(p-1)\times (p-1)$ matrix obtained by removing the $i^{th}$ column and the last row from $Q_{X \cup W}$, and $Q_{i,j}$ denote the $(p-2)\times (p-2)$ matrix obtained by removing the $j^{th}$ row and the last column from $Q_i$.  Then,

\begin{align*}
\det (Q_{X \cup W})&\equiv \sum_{i=1}^{p-1} (a_i \cdot  \det Q_i) + m \cdot \det Q_{p} \\
&\equiv \sum_{i=1}^{p-1} ( a_i \cdot  (\sum_{j=1}^{p-1} a_j\cdot \det Q_{i,j}) ) + m \cdot  \det Q_{p} \\
&\equiv  \sum^{p-1}_{i=1}(a_i^2 \cdot \det Q_{i,i}) + m \cdot \det Q_{p}  \\
&\equiv m \pmod{2}, \\
\end{align*}
where the third equality follows from the symmetric property $\det Q_{i,j}=\det Q_{j,i}$ for all $i,j$, and the last equality follows from the fact that $\det Q_{p}=p$ is odd and $\det Q_{i,i}$ is even for $1\leq i\leq p-1$.  



Hence, we see that $|H_1(Y')|={\rm det}(Q_{X \cup W})$ is even if and only if $m$ is even if and only if $Q_{X \cup W}$ is even if and only if $X \cup W$ is $\rm Spin$. Therefore $|H_1(Y')|$ is even if and only if $W$ is Spin.
\end{proof}

\subsection{$d$-invariant}

For a rational homology sphere $Y$ equipped with a $\rm Spin^c$ structure $\mathfrak{t}$, the {\it$d$-invariant}, or {\it correction term}, denoted by $d(Y, \mathfrak{t})$, is the minimal $\mathbb{Q}$-grading of the image of $HF^{\infty}(Y, \mathfrak{t})$ in $HF^+(Y, \mathfrak{t})$. We refer the reader to Ozsv\'{a}th-Szab\'{o} \cite{OS1} for details and cite the following recursive formula for the $d$-invariant of a lens space.

\begin{theorem}
\label{thmOS1}
Let $p>q>0$ be relatively prime integers. Then there exists an identification ${\rm Spin^c}(L(p,q)) \cong \mathbb{Z}/p$ such that
\begin{equation}
\label{eq1}
d(L(p,q),i)=-\dfrac{1}{4}+\dfrac{(2i+1-p-q)^2}{4pq}-d(L(q,r),j)
\end{equation}
where $r$ and $j$ are the reductions of $p$ and $i$ (mod $q$) respectively.
\end{theorem}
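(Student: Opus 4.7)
The plan is to proceed by induction on $q$ (equivalently, on the length of the Euclidean algorithm applied to $p,q$), with base case $q=1$, for which $d(L(p,1),i)$ can be computed directly from the explicit description of $HF^+(L(p,1))$ as a tower with known bottom grading. The inductive step reduces the computation for $L(p,q)$ to that for $L(q,r)$ with $r = p \bmod q$ via a cobordism argument.

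First I would realize $L(p,q)$ as the boundary of a negative-definite linear plumbing $X_{p/q}$ determined by the Hirzebruch--Jung continued fraction expansion $p/q = [a_1, a_2, \ldots, a_n]$. Removing an appropriate end-vertex from the plumbing graph yields a sub-plumbing bounding $L(q,r)$, and the difference is a negative-definite cobordism $W$ from $L(q,r)$ to $L(p,q)$ (with appropriate orientations). Concretely, $W$ can be realized by a short chain of $2$-handle attachments recording the Euclidean step $p=aq+r$, and the key algebraic feature is that $W$ together with the sub-plumbing reconstructs $X_{p/q}$, so $c_1^2$-calculations decompose along this splitting.

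Next I would apply the Ozsv\'ath--Szab\'o cobordism inequality for $d$-invariants of negative-definite cobordisms between rational homology $3$-spheres: for any Spin$^c$ structure $\mathfrak{t}$ on $W$ extending $\mathfrak{s}_i$ on $L(p,q)$ and $\mathfrak{s}'_j$ on $L(q,r)$,
$$d(L(p,q), \mathfrak{s}_i) \geq d(L(q,r), \mathfrak{s}'_j) + \frac{c_1(\mathfrak{t})^2 + b_2(W)}{4},$$
with equality realized for the Spin$^c$ structure $\mathfrak{t}$ maximizing $c_1(\mathfrak{t})^2$ over extensions, since the linear plumbings bounding lens spaces are \emph{sharp} in the sense of Ozsv\'ath--Szab\'o. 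Computing $c_1(\mathfrak{t})^2$ explicitly in terms of the intersection form of $W$ and the parameter $i$, then collecting the terms and correcting for $\chi$ and $\sigma$ contributions, should yield the quadratic $-\tfrac{1}{4} + (2i+1-p-q)^2/(4pq)$ appearing in the statement.

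The main obstacle will be the careful bookkeeping of Spin$^c$ structures through the cobordism: one must choose the identifications $\mathrm{Spin}^c(L(p,q)) \cong \mathbb{Z}/p$ and $\mathrm{Spin}^c(L(q,r)) \cong \mathbb{Z}/q$ so that the restriction map along $W$ corresponds precisely to reduction modulo $q$, so that $j \equiv i \pmod q$ as in the statement. The natural parametrization that achieves this is via the affine-shifted integers $2i+1-p-q$ (forced by the involution $i \mapsto p-1-i$ that swaps conjugate Spin$^c$ structures and leaves the formula invariant), and verifying its compatibility with the restriction map along $W$ requires tracking Poincar\'e duals of the core $2$-spheres in $W$ against the generator of $H^2(W)$.
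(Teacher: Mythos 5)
The paper does not prove Theorem~\ref{thmOS1}; it is quoted directly from Ozsv\'ath--Szab\'o~\cite{OS1}, so there is no internal argument to compare against. That said, your outline is close in spirit to the proof in~\cite{OS1}: both proceed by Euclidean-algorithm induction, using a negative-definite two-handle cobordism $W$ between the two lens spaces coming from the linear plumbing and extracting the quadratic term from the grading shift $\tfrac{c_1^2 - 2\chi - 3\sigma}{4}$ of the induced cobordism map. A few cautions are warranted. First, appealing to sharpness of linear plumbings imports the main theorem of~\cite{OSd} (chains have no bad vertices); this is logically sound but not necessary and is anachronistic relative to~\cite{OS1}. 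The more elementary route, which is essentially the one taken in~\cite{OS1}, uses only that lens spaces are $L$-spaces: across the negative-definite $W$ the cobordism map on $HF^\infty$ is an isomorphism, so the induced $U$-equivariant map between the two towers $HF^+$ pins down $d(L(p,q))$ directly in terms of $d$ of the smaller lens space plus the grading shift, without needing sharpness of the ambient plumbings. Second, deleting the initial vertex of the chain for $p/q=[a_1,\dots,a_n]$ leaves a sub-plumbing whose boundary is $L(q,\,q-r)\cong -L(q,r)$ rather than $L(q,r)$; the minus sign in front of $d(L(q,r),j)$ in~\eqref{eq1} is exactly $d(-L(q,r),j)$, so the recursion closes as stated, but the orientation bookkeeping in your sketch needs to be adjusted to make that explicit. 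Third, as you correctly flag, one must verify that the chosen affine identifications $\mathrm{Spin}^c(L(p,q))\cong\mathbb{Z}/p$ and $\mathrm{Spin}^c(L(q,r))\cong\mathbb{Z}/q$ make the restriction across $W$ of the maximizing $\mathrm{Spin}^c$ structure correspond to $j\equiv i\pmod q$; establishing this simultaneously with the optimization over extensions is the only genuinely delicate step, after which the arithmetic producing $(2i+1-p-q)^2/(4pq)$ is routine. Modulo these points your plan is a valid reconstruction of the standard argument.
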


Under the identification ${\rm Spin^c}(L(p,q)) \cong \mathbb{Z}/p$ in Theorem \ref{thmOS1}, the self-conjugate $\rm Spin^c$ structures on $L(p,q)$ correspond to the integers amongst $\dfrac{p+q-1}{2}$ and $\dfrac{q-1}{2}$. For a lens space $L(n,1)$ with $n>0$, if $n$ is odd, then there exists only one self-conjugate $\rm Spin^c$ structure corresponding to $0$; if $n$ is even, then there are two self-conjugate $\rm Spin^c$ structures corresponding to $\dfrac{n}{2}$ and $0$.

By the recursive formula above, we can compute the values of $d(L(n,1),i)$ for $n>0$ as follows.
\begin{equation}
d(L(n,1),i)=-\dfrac{1}{4}+\dfrac{(2i-n)^2}{4n}   \label{eq2}
\end{equation}

\medskip
When the cobordism associated to a distance one surgery between two rational homology spheres is Spin, there is a strong constraint on the $d$-invariant.

\begin{lemma}[Lidman-Moore-Vazquez, Lemma 2.7 in \cite{LMV}]
\label{lemmaLMV}
Let $(W,\mathfrak{s}):(Y,\mathfrak{t}) \rightarrow (Y',\mathfrak{t}')$ be a Spin cobordism between L-spaces satisfying $b_2^+(W)=1$ and $b_2^-(W)=0$. Then
\[d(Y',\mathfrak{t}')-d(Y,\mathfrak{t})=-\dfrac{1}{4}.\]
\end{lemma}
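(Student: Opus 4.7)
My strategy is to sandwich $d(Y',\mathfrak{t}') - d(Y,\mathfrak{t})$ between $-\frac{1}{4}$ and $-\frac{1}{4}$ via two applications of Ozsv\'ath--Szab\'o's $d$-invariant machinery for ${\rm Spin^c}$ cobordisms between rational homology spheres.

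For the upper bound, I would reverse the orientation of $W$ to obtain $\overline{W}$; since the intersection form negates, $b_2^+(\overline{W}) = b_2^-(W) = 0$ and $b_2^-(\overline{W}) = b_2^+(W) = 1$, so $\overline{W}$ is negative definite. Because $W$ is Spin, so is $\overline{W}$, hence $c_1(\mathfrak{s}) = 0$. Viewing $\overline{W}$ as a cobordism from $Y'$ to $Y$ (consistent with $\partial \overline{W} = Y \sqcup (-Y')$), the cobordism form of the Ozsv\'ath--Szab\'o $d$-invariant inequality for a negative-semidefinite ${\rm Spin^c}$ cobordism between rational homology spheres gives
\[
c_1(\mathfrak{s})^2 + b_2^-(\overline{W}) \leq 4\bigl(d(Y, \mathfrak{t}) - d(Y', \mathfrak{t}')\bigr),
\]
which simplifies to $1 \leq 4(d(Y, \mathfrak{t}) - d(Y', \mathfrak{t}'))$, i.e.\ $d(Y', \mathfrak{t}') - d(Y, \mathfrak{t}) \leq -\frac{1}{4}$.

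For the complementary lower bound, $W$ itself has $b_2^+(W) = 1$, so it is not negative semi-definite and the standard Ozsv\'ath--Szab\'o inequality does not apply directly. Instead, I would analyze the Heegaard Floer cobordism map $F_{W, \mathfrak{s}}$ on the two L-space towers. Since $HF^+$ of an L-space is the single tower $\mathcal{T}^+$ with bottom generator in grading equal to the $d$-invariant, and the cobordism map shifts grading by $\frac{c_1(\mathfrak{s})^2 - 2\chi(W) - 3\sigma(W)}{4}$, the non-vanishing of an appropriate Heegaard Floer invariant for a Spin cobordism between L-spaces (compensating for the fact that the naive $F^+_{W,\mathfrak{s}}$ can be zero in the $b_2^+ = 1$ range) forces the opposite bound $d(Y', \mathfrak{t}') - d(Y, \mathfrak{t}) \geq -\frac{1}{4}$.

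Combining the two bounds produces the claimed equality. The main obstacle is the lower bound: because the ordinary cobordism map on $HF^\infty$ (hence on $HF^+$ in the L-space case) vanishes once $b_2^+(W) > 0$, one must either invoke the mixed invariant for $b_2^+ = 1$ cobordisms, or carry out a blow-up/filling argument that reduces the situation to a genuinely negative-definite setting where the OS inequality can be applied a second time in the reverse direction.
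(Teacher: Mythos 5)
Your upper bound is correct and matches what I believe the Lidman--Moore--Vazquez argument does: reverse orientation, observe that $\overline{W}\colon Y'\to Y$ is a negative-definite Spin cobordism with $b_1=0$ between L-spaces, and apply the Ozsv\'ath--Szab\'o correction term inequality with $c_1(\mathfrak{s})=0$ to get $d(Y',\mathfrak{t}')-d(Y,\mathfrak{t})\leq -\tfrac{1}{4}$.

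The lower bound, however, is a genuine gap, and the approach you sketch is not just incomplete but arithmetically inconsistent with what needs to be proved. You propose to get the complementary inequality from a (possibly mixed) cobordism map for $W$ itself, using the grading shift $\tfrac{c_1(\mathfrak{s})^2-2\chi(W)-3\sigma(W)}{4}$. With $c_1=0$, $\chi(W)=1$ (a single two-handle), and $\sigma(W)=1$, this shift equals $-\tfrac{5}{4}$, not $-\tfrac{1}{4}$. So even if you produced a nonvanishing degree-homogeneous map between the towers with this shift, the resulting constraint would be $d(Y')-d(Y)\in -\tfrac{5}{4}+2\mathbb{Z}$; combined with the upper bound $\leq -\tfrac{1}{4}$ this would force $d(Y')-d(Y)\leq -\tfrac{5}{4}$, contradicting the lemma rather than establishing it. (Indeed $-\tfrac{1}{4}\not\equiv -\tfrac{5}{4}\pmod 2$, which is precisely consistent with the fact that $F^{+}_{W,\mathfrak{s}}$ vanishes when $b_2^+(W)>0$, so there is no contradiction with the statement of the lemma --- but it does mean your route cannot yield the lower bound.) Likewise, the proposed blow-up/filling reduction does not obviously help: connect-summing $W$ with copies of $\overline{\mathbb{CP}}^2$ increases $b_2^-$ but leaves $b_2^+(W)=1$ intact, so the manifold never becomes negative semi-definite. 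What is actually needed is an argument that the OS inequality applied to $\overline{W}$ is \emph{sharp} here, e.g.\ by showing that the induced map $\widehat{F}_{\overline{W},\mathfrak{s}}\colon\widehat{HF}(Y',\mathfrak{t}')\to\widehat{HF}(Y,\mathfrak{t})$ between the two one-dimensional groups of the L-spaces is nonzero (equivalently, that $F^{+}_{\overline{W},\mathfrak{s}}$ is injective on the bottom of the tower), which pins the grading shift of $\overline{W}$, namely $+\tfrac{1}{4}$, to the exact difference of $d$-invariants. Your write-up neither identifies this as the needed step nor supplies it, so the lemma is not proved.
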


Now we are ready to show Theorem \ref{theoremeven}. The proof runs a very similar argument to Lidman-Moore-Vazquez \cite[Proposition 2.6]{LMV}.   


\begin{proof}[Proof of Theorem \ref{theoremeven}]
First we prove the ``only if'' part of the theorem. By homological obstruction, distance one surgery on $L(p,1)$ can not give $S^1 \times S^2$, so we assume that $n \neq 0$.  Let $W$ be the associated 2-handle cobordism between $L(p,1)$ and $L(n,1)$.  Then $H_2(W)=\mathbb{Z}$.  We claim that $W$ is either positive definite (i.e. $b^+_2(W)=1$ and $b^-_2(W)=0$) or negative definite (i.e. $b^+_2(W)=0$ and $b^-_2(W)=1$), that is, the case $b^+_2(W)=b^-_2(W)=0$ can never happen. Let $N$ be a 4-manifold with boundary $L(p,1)$, which is obtained by attaching a $p$-framed 2-handle to $B^4$ along an unknot. Let $Z$ denote the 4-manifold obtained by attaching $N$ to $W$. Then $b^{\pm}_2(Z)=b^{\pm}_2(N)+b^{\pm}_2(W)$. We see that $W$ satisfies $b^+_2(W)=b^-_2(W)=0$ if and only if $b^+_2(Z)=1$ and $b^-_2(Z)=0$. Note that the intersection form $Q_Z$ of $Z$ is
$$Q_{Z}=
\begin{bmatrix}
p & k\\
k & m
\end{bmatrix}.$$
As $pm-k^2 = n \neq 0$, we can never have $b^+_2(W)=b^-_2(W)=0$.

Since $|H_1(L(n,1))|=|n|$ is assumed to be even, Lemma \ref{lemma1} implies that the associated cobordism $W$ is Spin. If $b^+_2(W)=1$ and $b^-_2(W)=0$, then by Lemma \ref{lemmaLMV} we have
\begin{equation}
d(L(n,1), i)-d(L(p,1),0)=-\dfrac{1}{4}, \label{eq5}
\end{equation}
where $i=0$ or $\frac{|n|}{2}$. Applying Equation (\ref{eq2}) to $L(p,1)$, we conclude that $d(L(n,1), i)=\dfrac{p}{4}-\dfrac{1}{2}$ for $i=0$ or $\frac{|n|}{2}$. If $i=0$, Equation (\ref{eq2}) gives

\[d(L(n,1),0)= \left\{\begin{matrix}
\dfrac{n-1}{4}  \qquad  n>0\\
\dfrac{1+n}{4}   \qquad n<0
\end{matrix}\right. .\]

Therefore, $d(L(n,1), 0)=\dfrac{p}{4}-\dfrac{1}{2}$ may only occur when $n=p-1$. If $i=\dfrac{|n|}{2}$, Equation (\ref{eq2}) gives
\[d(L(n,1),\dfrac{|n|}{2})= \left\{\begin{matrix}
-\dfrac{1}{4} \qquad  n>0\\
\dfrac{1}{4}  \qquad \,\,  n<0
\end{matrix}\right. .\]
which can never equal $\dfrac{p}{4}-\dfrac{1}{2}$.

If $b^+_2(W)=0$ and $b^-_2(W)=1$, applying Lemma \ref{lemmaLMV} to $-W$ we obtain
\begin{equation}
d(L(p,1),0)-d(L(n,1), i)=-\dfrac{1}{4}, \label{eq6}
\end{equation}
where $i=0$ or $\frac{|n|}{2}$. By a similar calculation, we find that (\ref{eq6}) holds only when $n=p+1$. So we complete the proof of the ``only if'' part.

The ``if'' part is true because there exist band surgeries from $T(2,p)$ to $T(2,p+1)$ and band surgeries from $T(2,p)$ to $T(2,p-1)$ shown in Figure \ref{bandsurgery2} and \ref{bandsurgery3}.  The double branched cover of those band surgeries gives the desired distance one surgery.  This completes the proof.
\end{proof}



\section{Surgeries along null-homologous knots}
\label{Surgeries along null-homologous knots}
\subsection{$d$-invariant surgery formula for null-homologous knots}

For any null-homologous knot $K$ in a rational homology sphere $Y$, there exists a non-negative integer $V_{\mathfrak{t},i}$ associated to $K$ for each $i \in \mathbb{Z}$ and $\mathfrak{t} \in {\rm Spin^c}(Y)$ satisfying the following property.

\begin{property}[Proposition 7.6 in \cite{Ras1}]
\label{monotonicity}
\[V_{\mathfrak{t},i} \geq V_{\mathfrak{t},i+1} \geq V_{\mathfrak{t},i}-1.\]
\end{property}

Let $\mathfrak{t}_i\in {\rm Spin^c}(Y_m(K)) $ denote the ${\rm Spin^c}$ structure that corresponds to $(\mathfrak{t},i) \in {\rm Spin^c}(Y)\oplus \mathbb{Z}_m$ under the natural bijection between  the sets of $\rm Spin^c$ structures ${\rm Spin^c}(Y_m(K))$ and ${\rm Spin^c}(Y) \oplus \mathbb{Z}_m$.
Ni and the first author give a $d$-invariant surgery formula for a knot in $S^3$ \cite[Proposition 1.6]{NiWu}, whose argument also applies to a general null-homologous knot in an $L$-space.





\begin{prop}
\label{NiWu}
Fix an integer $m>0$ and a self-conjugate $\rm Spin^c$ structure $\mathfrak{t}$ on an L-space $Y$. Let $K$ be a null-homologous knot in $Y$. Then, for any $\mathfrak{t}_i \in {\rm Spin^c}(Y_m(K))$,
\begin{equation}
d(Y_m(K), \mathfrak{t}_i)=d(Y,\mathfrak{t})+d(L(m,1),i)-2N_{\mathfrak{t},i}, \label{eq4}
\end{equation}
where $N_{\mathfrak{t},i}=\max\{V_{\mathfrak{t},i},V_{\mathfrak{t},m-i}\}$.
\end{prop}


Lidman, Moore and Vazquez also give the following lemma which will be used repeatedly when we apply the above $d$-invariant surgery formula.

\begin{lemma}[Lidman-Moore-Vazquez \cite{LMV}]
\label{t0LMV}
Let $K$ be a null-homologous knot in a $\mathbb{Z}/2$ homology sphere $Y$ and $\mathfrak{t}$ a self-conjugate $\rm Spin^c$ structure on $Y$. Let $\mathfrak{t}_0$ be the $\rm Spin^c$ structure on $Y_m(K)$ as described in Proposition \ref{NiWu}. Then $\mathfrak{t}_0$ is self-conjugate.
\end{lemma}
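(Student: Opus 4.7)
The plan is to exploit the naturality of conjugation on $\mathrm{Spin}^c$ structures under 2-handle cobordisms, combined with the explicit labeling formula for $\mathfrak{t}_i$ stated just before the lemma. Let $\mathfrak{s}$ be the $\mathrm{Spin}^c$ structure on $W_m(K)$ that restricts to $\mathfrak{t}$ on $Y$ and to $\mathfrak{t}_0$ on $Y_m(K)$. By the labeling convention, the index $0$ corresponds to the condition
\[
\langle c_1(\mathfrak{s}), [\hat F]\rangle + m \equiv 0 \pmod{2m},
\]
so $\langle c_1(\mathfrak{s}), [\hat F]\rangle \equiv -m \pmod{2m}$.

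Next I would apply conjugation: since the operation $\mathfrak{s}\mapsto \bar{\mathfrak{s}}$ commutes with restriction to boundary components, $\bar{\mathfrak{s}}$ is a $\mathrm{Spin}^c$ structure on $W_m(K)$ that restricts to $\bar{\mathfrak{t}}$ on $Y$ and to $\bar{\mathfrak{t}}_0$ on $Y_m(K)$. The hypothesis that $\mathfrak{t}$ is self-conjugate on $Y$ ensures $\bar{\mathfrak{s}}|_Y = \mathfrak{t}$, which is exactly the boundary condition in Proposition \ref{NiWu}. Hence $\bar{\mathfrak{t}}_0$ is among the $\mathrm{Spin}^c$ structures labelled by $\mathbb{Z}/m\mathbb{Z}$, and I may ask which label $j$ it carries.

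That label is computed from the same formula: $\bar{\mathfrak{t}}_0 = \mathfrak{t}_j$ where
\[
\langle c_1(\bar{\mathfrak{s}}), [\hat F]\rangle + m \equiv 2j \pmod{2m}.
\]
Using $c_1(\bar{\mathfrak{s}}) = -c_1(\mathfrak{s})$ and the value computed above, the left-hand side is $m-(-m) = 2m \equiv 0 \pmod{2m}$. Therefore $j \equiv 0 \pmod m$, so $j=0$ in $\mathbb{Z}/m\mathbb{Z}$ and $\bar{\mathfrak{t}}_0 = \mathfrak{t}_0$, which is the desired self-conjugacy.

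I do not expect any real obstacle here; the only subtlety is making sure that the bijection $i\leftrightarrow \mathfrak{t}_i$ described in Proposition \ref{NiWu} is compatible with conjugation, but since the correspondence is phrased purely in terms of the pairing $\langle c_1(\mathfrak{s}),[\hat F]\rangle$ and a choice of orientation of the Seifert surface, the conjugation on $\mathrm{Spin}^c(W_m(K))$ induces precisely $i \mapsto -i \pmod m$, and sending a self-conjugate $\mathfrak{t}$ to the class labelled $0$ is forced.
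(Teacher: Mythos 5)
Your proof is correct. The paper attributes this lemma to Lidman--Moore--Vazquez \cite{LMV} and cites it without reproducing a proof, so there is no in-paper argument to compare against. Your route---applying conjugation to the $\mathrm{Spin}^c$ structure $\mathfrak{s}$ on $W_m(K)$, using the self-conjugacy of $\mathfrak{t}$ to see that $\bar{\mathfrak{s}}$ still extends $\mathfrak{t}$ over the cobordism, and then invoking $c_1(\bar{\mathfrak{s}}) = -c_1(\mathfrak{s})$ together with the labeling formula to compute $\langle c_1(\bar{\mathfrak{s}}),[\hat F]\rangle + m \equiv m+m \equiv 0 \pmod{2m}$, hence $\bar{\mathfrak{t}}_0 = \mathfrak{t}_0$---is the natural and standard argument for this kind of statement. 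The one point you flag yourself, that the label $i$ in Proposition \ref{NiWu} is well defined as a function of $\mathfrak{t}_i$ alone (independent of the auxiliary choice of $\mathfrak{s}$ over the cobordism), is implicit in the statement of that proposition and causes no trouble. As a minor observation, note that the $\mathbb{Z}/2$ homology sphere hypothesis plays no role in your verification; it is used elsewhere in the paper to ensure that a self-conjugate $\mathfrak{t}$ on $Y$ exists and is unique, not in showing that $\mathfrak{t}_0$ is self-conjugate once such a $\mathfrak{t}$ is fixed.
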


\subsection{Surgeries along null-homologous knots}
\label{sub Surgeries along null-homologous knots}
In this section, we study surgeries along null-homologous knots in $L(p,1)$ with $p \geq 5$ prime. To prove Theorem \ref{theoremgen} (i), we may assume that the surgered manifold is the lens space $L(n,1)$ with $n=\pm pm$ for some odd integer $m$.   The argument is adapted from  \cite[Section 3.1]{LMV}, where an analogous statement for surgeries along null-homologous knots in $L(3,1)$ is proved.




\begin{prop}
\label{propofnull1}
If $m \geq 3$ is odd, then $L(pm,1)$ cannot be obtained by $m$-surgery along a null-homologous knot in $L(p,1)$ with $p \geq 5$ prime.
\end{prop}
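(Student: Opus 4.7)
The plan is to apply the $d$-invariant surgery formula of Proposition~\ref{NiWu} at the unique self-conjugate $\mathrm{Spin}^c$ structure of $L(p,1)$ and derive a sign contradiction, following the outline of \cite[Section~3.1]{LMV} for $p=3$. The key auxiliary input is Lemma~\ref{t0LMV}, which pins down the $\mathrm{Spin}^c$ structure on the surgered manifold without any ambiguity.

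Concretely, since both $p$ and $pm$ are odd, each of $L(p,1)$ and the hypothetical target $L(pm,1)$ carries a unique self-conjugate $\mathrm{Spin}^c$ structure, corresponding to $i=0$ under the identification of Theorem~\ref{thmOS1}. Formula (\ref{eq2}) gives the three values
\[ d(L(p,1),0) = \tfrac{p-1}{4}, \qquad d(L(m,1),0) = \tfrac{m-1}{4}, \qquad d(L(pm,1),0) = \tfrac{pm-1}{4}. \]
Let $K$ be a null-homologous knot in $Y = L(p,1)$ and $\mathfrak{t}$ the self-conjugate $\mathrm{Spin}^c$ structure on $Y$. Assume, for contradiction, that $Y_m(K) \cong L(pm,1)$. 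By Lemma~\ref{t0LMV} the $\mathrm{Spin}^c$ structure $\mathfrak{t}_0$ produced by Proposition~\ref{NiWu} is self-conjugate on $Y_m(K)$, and the oddness of $pm$ forces it to coincide with the unique self-conjugate structure on $L(pm,1)$. Substituting the three computed values into equation (\ref{eq4}) then yields $(p-1)(m-1) = -8\,N_{\mathfrak{t},0}$. For $p \geq 5$ and $m \geq 3$ the left side is strictly positive, while $N_{\mathfrak{t},0} \geq 0$ makes the right side non-positive, a contradiction.

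There is no serious obstacle in this proposition: the calculation is almost identical to the $p=3$ case treated in \cite{LMV}, because $d(L(p,1),0) = (p-1)/4$ enters the formula linearly and so the sign of $(p-1)(m-1)$ is forced once the hypotheses $p\geq 5$, $m\geq 3$ are invoked. The genuinely new technical work of the paper instead begins in Section~\ref{$d$-invariant surgery formula for homologically essential knots}, where a $d$-invariant surgery formula has to be established from the mapping cone formula for homologically essential knots in $L(p,1)$.
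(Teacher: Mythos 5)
Your proof is correct and follows essentially the same argument as the paper: apply Proposition~\ref{NiWu} with $i=0$ at the self-conjugate $\mathrm{Spin}^c$ structure, use Lemma~\ref{t0LMV} to identify $\mathfrak{t}_0$ with the unique self-conjugate structure on $L(pm,1)$, and then compute $N_{\mathfrak{t},0}=\tfrac{(1-p)(m-1)}{8}<0$, contradicting non-negativity. The paper's proof is the same calculation phrased slightly differently; no distinct ideas are involved.
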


\begin{proof}
Suppose $L(pm,1)$ is obtained by $m$-surgery along a null-homologous knot in $L(p,1)$, and $m \geq 3$ is odd.
Since $p$ is odd, the $\rm Spin^c$ structure corresponding to $0$ is the unique self-conjugate one on $L(p,1)$. Choose the self-conjugate $\rm Spin^c$ structure $\mathfrak{t}=0$ on $L(p,1)$ and let $i=0$ in Formula (\ref{eq4}). Then we have
\begin{equation}
\label{eq7}
d(L(pm,1),\mathfrak{t}_0)=d(L(p,1),0)+d(L(m,1),0)-2N_{0,0},
\end{equation}
where the first 0 in the subscript of $N_{0,0}$ stands for the self-conjugate $\rm Spin^c$ structure corresponding to $0$ on $L(p,1)$ and the second 0 in the subscript represents $i=0$.

Lemma \ref{t0LMV} implies that $\mathfrak{t}_0$ is a self-conjugate $\rm Spin^c$ structure on $L(pm,1)$.  Since $m$ and $p$ are odd integers, $\mathfrak{t}_0$ must be the unique self-conjugate $\rm Spin^c$ structure on $L(pm,1)$ that corresponds to 0 in the above identification with $\mathbb{Z}/pm$. Equation (\ref{eq7}) implies
\begin{align*}
N_{0,0}&=\dfrac{d(L(p,1),0)+d(L(m,1),0)-d(L(pm,1),0)}{2}\\
       &=\dfrac{(1-p)(m-1)}{8} < 0,
\end{align*}
which contradicts the fact that $N_{0,0}$ is non-negative.
\end{proof}

\begin{prop}
\label{propofnull2}
If $m \geq 1$ is odd, then $L(-pm,1)$ cannot be obtained by $-m$-surgery along a null-homologous knot in $L(p,1)$ with $p \geq 5$ prime.
\end{prop}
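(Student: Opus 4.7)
The plan is to reduce Proposition \ref{propofnull2} to the positive-surgery setting of Proposition \ref{propofnull1} via orientation reversal. Suppose for a contradiction that $L(-pm,1)$ is obtained by $-m$-surgery along a null-homologous knot $K\subset L(p,1)$. Reversing the orientation of the ambient 3-manifold converts $L(p,1)$ into $-L(p,1)=L(p,p-1)$ and $L(-pm,1)$ into $-L(-pm,1)=L(pm,1)$. Since the integer framing of a null-homologous knot is measured by the linking number with its Seifert parallel, which flips sign under orientation reversal, the $-m$-surgery on $K\subset L(p,1)$ becomes a $+m$-surgery on the same null-homologous knot $K\subset L(p,p-1)$ whose result is $L(pm,1)$.

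Next I would apply Proposition \ref{NiWu} to this reversed setup: $L(p,p-1)$ is an L-space and (since $p$ is odd) a $\mathbb{Z}/2$ homology sphere. Let $\mathfrak{t}$ be the unique self-conjugate $\rm Spin^c$ structure on $L(p,p-1)$, which under Theorem \ref{thmOS1} corresponds to $p-1\in\mathbb{Z}/p$; the recursive formula (\ref{eq1}) (or equivalently the identity $d(-Y,\mathfrak{t})=-d(Y,\mathfrak{t})$) gives $d(L(p,p-1),p-1)=-\frac{p-1}{4}$. Taking $i=0$, Lemma \ref{t0LMV} ensures that $\mathfrak{t}_0$ is the unique self-conjugate $\rm Spin^c$ structure on $L(pm,1)$, namely $0$, with $d(L(pm,1),0)=\frac{pm-1}{4}$. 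Combined with $d(L(m,1),0)=\frac{m-1}{4}$, Formula (\ref{eq4}) forces
\[
N_{\mathfrak{t},0}=\frac{d(L(p,p-1),p-1)+d(L(m,1),0)-d(L(pm,1),0)}{2}=\frac{(1-p)(1+m)}{8},
\]
which is strictly negative for $p\geq 5$ and $m\geq 1$, contradicting the non-negativity of $N_{\mathfrak{t},0}$.

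The only real delicacy is the orientation-reversal bookkeeping: one must verify (i) that $-L(p,1)$ and $-L(-pm,1)$ are precisely the oriented lens spaces $L(p,p-1)$ and $L(pm,1)$, and (ii) that a negative integer surgery coefficient in $L(p,1)$ corresponds to a positive one in $L(p,p-1)$ for the same underlying 2-handle attachment (an intersection-form computation on the trace cobordism). Once these are settled, the argument is entirely parallel to Proposition \ref{propofnull1}; the sign change $d(L(p,p-1),p-1)=-d(L(p,1),0)$ is precisely what makes $N_{\mathfrak{t},0}$ negative for every odd $m\geq 1$, including the $m=1$ case not covered in Proposition \ref{propofnull1}.
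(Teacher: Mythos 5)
Your proposal is correct and follows essentially the same route as the paper: reverse orientation so that $L(pm,1)$ arises from $+m$-surgery along a null-homologous knot in $-L(p,1)$ (which you write as $L(p,p-1)$, the paper writes as $L(-p,1)$), then apply Proposition \ref{NiWu} with the self-conjugate $\rm Spin^c$ structure and $i=0$, together with Lemma \ref{t0LMV}, to force $N_{\mathfrak{t},0}=\frac{(1-p)(1+m)}{8}<0$, a contradiction. Your extra remarks on the orientation-reversal bookkeeping (identifying $-L(p,1)$ and the sign flip of the integer framing) are correct and make explicit what the paper treats tersely.
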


\begin{proof}
Suppose $L(-pm,1)$ is obtained by $-m$-surgery along a null-homologous knot in $L(p,1)$ with $m \geq 1$ odd. By reversing the orientation, $L(pm,1)$ is obtained by $m$-surgery on $L(-p,1)$ along a null-homologous knot. Choosing the self-conjugate $\rm Spin^c$ structure $\mathfrak{t}=0$ and $i=0$ in Formula (\ref{eq4}), we have
\begin{equation*}
d(L(pm,1),\mathfrak{t}_0)=d(L(-p,1),0)+d(L(m,1),0)-2N_{0,0}.
\end{equation*}
Also, $\mathfrak{t}_0$ is a self-conjugate $\rm Spin^c$ structure on $L(pm,1)$ that corresponds to $0$ in $\mathbb{Z}/pm$. Hence,
\begin{align*}
N_{0,0}&=\dfrac{d(L(-p,1),0)+d(L(m,1),0)-d(L(pm,1),0)}{2}\\
       &=\dfrac{(1-p)(m+1)}{8} < 0,
\end{align*}
which is a contradiction.
\end{proof}

\begin{prop}
\label{propofnull3}
If $m \geq 3$ is odd, then $L(pm,1)$ cannot be obtained by $-m$-surgery along a null-homologous knot in $L(p,1)$ with $p \geq 5$ prime.
\end{prop}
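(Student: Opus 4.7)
The plan is to reverse orientation so that Proposition \ref{NiWu} applies with a positive surgery coefficient, pin down $V_{0,0}$ at the unique self-conjugate $\mathrm{Spin}^c$ structure, sum the surgery formula over all $m$ extending $\mathrm{Spin}^c$ structures to evaluate $\sum_i N_{0,i}$ exactly, and then invoke monotonicity to reach a numerical contradiction. Note that the straightforward reversal trick used for Propositions \ref{propofnull1} and \ref{propofnull2} produces only the non-negative quantity $N_{0,0} = (p+1)(m-1)/8$, so a second input is needed.

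First I would reverse orientation: if $L(pm,1) = L(p,1)_{-m}(K)$, then equivalently $L(-pm,1) = L(-p,1)_m(K)$ with $m>0$ odd. Since $pm$ is odd, the unique self-conjugate $\mathrm{Spin}^c$ structure on $L(-pm,1)$ corresponds to $0 \in \mathbb{Z}/pm$, and by Lemma \ref{t0LMV} this forces $\mathfrak{t}_0 \leftrightarrow 0$. Setting $\mathfrak{t}=0$, $i=0$ in Proposition \ref{NiWu} and using Equation \ref{eq2} gives
\[N_{0,0} = V_{0,0} = \frac{(p+1)(m-1)}{8},\]
where the first equality uses $V_{0,m} \leq V_{0,0}$ by monotonicity.

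Next I would sum the surgery formula over $i = 0, 1, \ldots, m-1$. The $m$ extending $\mathrm{Spin}^c$ structures on $L(-pm,1)$ form a coset of the unique order-$m$ subgroup of $\mathbb{Z}/pm$; since $\mathfrak{t}_0 \leftrightarrow 0$ lies in this coset, the coset is precisely the subgroup $\{0, p, 2p, \ldots, (m-1)p\}$. Using the identity $\sum_{j=0}^{m-1}(2j-m)^2 = m(m^2+2)/3$, a short arithmetic computation yields
\[\sum_{i=0}^{m-1} N_{0,i} = \frac{(p+1)(m-1)(m-2)}{24},\]
from which, by the symmetry $N_{0,i} = N_{0,m-i}$, I can extract $\sum_{i=1}^{(m-1)/2} N_{0,i} = \frac{(p+1)(m-1)(m-5)}{48}$.

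Finally, Property \ref{monotonicity} gives $V_{0,i} \geq V_{0,0} - i$, and for $1 \leq i \leq (m-1)/2$ we have $N_{0,i} = V_{0,i}$ (again by monotonicity, since $V_{0,m-i} \leq V_{0,i}$ in that range). Since $V_{0,0} = (p+1)(m-1)/8 \geq (m-1)/2$ whenever $p \geq 3$, summing these lower bounds yields
\[\sum_{i=1}^{(m-1)/2} N_{0,i} \geq \frac{(p+1)(m-1)^2}{16} - \frac{m^2-1}{8}.\]
Comparing this lower bound with the exact value computed above and clearing denominators collapses to $p+1 \leq 3$, contradicting $p \geq 5$. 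I anticipate the main obstacle to be the explicit identification of the $m$ extending $\mathrm{Spin}^c$ structures as the subgroup $\{0, p, \ldots, (m-1)p\} \subset \mathbb{Z}/pm$; once that bookkeeping is settled, the rest of the proof reduces to routine arithmetic.
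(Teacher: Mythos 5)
Your proposal is correct, and it takes a genuinely different route from the paper. The paper, after computing $N_{0,0}=V_{0,0}=\tfrac{(p+1)(m-1)}{8}$ exactly as you do, then sets $i=1$ in the surgery formula, splits into the two cases $V_{0,1}=V_{0,0}$ or $V_{0,1}=V_{0,0}-1$ permitted by Property~\ref{monotonicity}, and in each case produces a monic integer quadratic in the unknown index $j$ of $\mathfrak{t}_1$ which is shown to have no root in $[0,pm-1]$ by examining its sign at the endpoints and near its vertex. This avoids ever having to pin down which element of $\mathbb{Z}/pm$ the structure $\mathfrak{t}_1$ is; the paper only uses that it is \emph{some} $j$ in that range. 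Your approach instead sums the surgery formula over all $i\in\mathbb{Z}/m$ to compute $\sum_i N_{0,i}=\tfrac{(p+1)(m-1)(m-2)}{24}$ exactly, and then plays the (large) value of $N_{0,0}$ against this (small) total via the linear lower bound $N_{0,i}=V_{0,i}\geq V_{0,0}-i$, collapsing to $p+1\leq 3$. This avoids the case split and the quadratic-root analysis entirely, but it does require the identification you flag: the $m$ extending $\mathrm{Spin}^c$ structures must be exactly the unique order-$m$ subgroup $\{0,p,\dots,(m-1)p\}\subset\mathbb{Z}/pm$. That identification is correct — the set of extensions is a coset of $r(\ker(H^2(W)\to H^2(L(p,1))))$ under the translation action, and since $\mathfrak{t}_0\leftrightarrow 0$ lies in it, the coset is the subgroup itself, which has order $m$ and hence is generated by $p$ — but it is a genuine extra input that the paper's argument sidesteps, so it is worth including a sentence or two justifying it. As a bonus, your computation shows that for $m=3$ the total $\sum_{i=1}^{1}N_{0,i}=\tfrac{-(p+1)}{12}$ is already negative, giving an immediate contradiction before the monotonicity step is even needed.
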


\begin{proof}
Suppose that $L(pm,1)$ is obtained by $-m$-surgery along a null-homologous knot in $L(p,1)$ with $m \geq 3$ odd. Reversing the orientation, we have $L(-pm,1)$ is given by $m$-surgery on $L(-p,1)$ along a null-homologous knot. Consider the self-conjugate $\rm Spin^c$ structure $\mathfrak{t}=0$. Formula (\ref{eq4}) gives us the following equation
\begin{equation*}
d(L(-pm,1),\mathfrak{t}_0)=d(L(-p,1),0)+d(L(m,1),0)-2N_{0,0},
\end{equation*}
where we choose $i=0$. Also by the same reason as above, the self-conjugate $\rm Spin^c$ structure $\mathfrak{t}_0$ corresponds to $0$ on $L(-pm,1)$, so
\begin{align*}
N_{0,0}&=\dfrac{d(L(-p,1),0)+d(L(m,1),0)+d(L(pm,1),0)}{2}\\
       &=\dfrac{(p+1)(m-1)}{8} >0,
\end{align*}
where $N_{0,0}=\max\{V_{0,0}, V_{0,m}\}=V_{0,0}$ by the monotonicity of $V_{\mathfrak{t},i}$. Now we choose $\mathfrak{t}=0$ and $i=1$ in Formula (\ref{eq4}). Then
\begin{equation}
\label{eq8}
d(L(-pm,1),\mathfrak{t}_1)=d(L(-p,1),0)+d(L(m,1),1)-2N_{0,1},
\end{equation}
where $N_{0,1}=\max\{V_{0,1},V_{0,m-1}\}=V_{0,1}$. By Property \ref{monotonicity},
$V_{0,1}=V_{0,0} \,\,\, {\rm or} \,\,\, V_{0,0}-1$.  

\medskip\noindent
Case \romannumeral1: $V_{0,1}=V_{0,0}=\dfrac{(p+1)(m-1)}{8}$.\\
Equation (\ref{eq2}) implies
$d(L(-pm,1), \mathfrak{t}_1)=-\left(-\dfrac{1}{4}+ \dfrac{(2j-pm)^2}{4pm}\right)$
 for some integer $j\in [0, pm-1]$.
Putting it into (\ref{eq8}), we have
\[-\dfrac{1}{4}+ \dfrac{(2j-pm)^2}{4pm}+\dfrac{1-p}{4}+\left(-\dfrac{1}{4}+\dfrac{(2-m)^2}{4m}\right)-\dfrac{(p+1)(m-1)}{4}=0\]
for some integer $j \in [0,pm-1]$, which can be further simplified to
\begin{equation}
\label{eq24}
j^2-pmj+p-mp=0.
\end{equation}
We claim that the function $f(x)=x^2-pmx+p-mp$ has no root in $[0,pm-1]$. Indeed, $f(x)<0$ for any $x \in [0,pm-1]$, since its axis of symmetry is $x=\frac{pm}{2}$ and $f(0)=p-mp<0$. So this case is impossible.

\medskip\noindent
Case \romannumeral2: $V_{0,1}=V_{0,0}-1=\dfrac{(p+1)(m-1)}{8}-1$. \\
Similarly, by (\ref{eq8}), there exists an integer $j \in [0,pm-1]$ satisfying

\[-\dfrac{1}{4}+ \dfrac{(2j-pm)^2}{4pm}+\dfrac{1-p}{4}+\left(-\dfrac{1}{4}+\dfrac{(2-m)^2}{4m}\right)-\dfrac{(p+1)(m-1)}{4}+2=0,\]
which can be simplified to
\begin{equation}
\label{eq25}
j^2-pmj+pm+p=0.
\end{equation}
Let $f(x)=x^2-pmx+pm+p$. Its axis of symmetry is $x=\frac{pm}{2}$. We see that $f(0)=pm+p>0$, $f(1)=p+1>0$ and $f(2)=p+4-pm<0$ since $m \geq 3$ and $p \geq 5$. Hence the roots of $f(x)$ are in the intervals $(1,2)$ and $(pm-2,pm-1)$, and they are not integers. Therefore, there is no integral root of $f(x)$ in $[0,pm-1]$, which gives a contradiction.

%
%
\end{proof}

\begin{prop}
\label{propofnull4}
If $m \geq 3$ is odd, then $L(-pm,1)$ cannot be obtained by $m$-surgery along a null-homologous knot in $L(p,1)$ with $p \geq 5$ prime.
\end{prop}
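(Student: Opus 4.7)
The plan is to mirror the proof of Proposition \ref{propofnull3} almost verbatim, this time applying Proposition \ref{NiWu} directly to the knot in $L(p,1)$ without first reversing orientation. Assume toward contradiction that $L(-pm,1)$ is obtained by $m$-surgery along a null-homologous knot in $L(p,1)$, with $m \geq 3$ odd and $p \geq 5$ prime. Since $p$ and $pm$ are both odd, the unique self-conjugate Spin$^c$ structure on each of $L(p,1)$ and $L(-pm,1)$ corresponds to $0$ in the standard identification; Lemma \ref{t0LMV} then ensures that the structure $\mathfrak{t}_0$ on $L(-pm,1)$ produced by Proposition \ref{NiWu} is indeed the one labelled $0 \in \mathbb{Z}/pm$. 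Taking $\mathfrak{t}=0$ and $i=0$ in Proposition \ref{NiWu}, and using (\ref{eq2}) together with the orientation-reversal identity $d(L(-pm,1), 0) = -\tfrac{pm-1}{4}$, one computes
\[N_{0,0} = \dfrac{d(L(p,1),0) + d(L(m,1),0) - d(L(-pm,1),0)}{2} = \dfrac{pm + p + m - 3}{8} > 0,\]
so by Property \ref{monotonicity} the possibilities for $N_{0,1} = V_{0,1}$ are exactly $V_{0,0}$ or $V_{0,0} - 1$.

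Next, I would take $i=1$ in Proposition \ref{NiWu} and write $d(L(-pm,1), \mathfrak{t}_1) = \tfrac{1}{4} - \tfrac{(2j-pm)^2}{4pm}$ for the corresponding $j \in [0, pm-1]$. Substituting the explicit values of $d(L(p,1),0)$ and $d(L(m,1),1)$ from (\ref{eq2}) and clearing denominators yields a quadratic equation in $j$. A direct computation shows that Case I, $V_{0,1} = V_{0,0}$, reduces to exactly $j^2 - pmj + p - mp = 0$, which is Equation (\ref{eq24}), and Case II, $V_{0,1} = V_{0,0} - 1$, reduces to exactly $j^2 - pmj + pm + p = 0$, which is Equation (\ref{eq25}). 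Both equations were already shown in the proof of Proposition \ref{propofnull3} to have no integer root in $[0, pm-1]$: the former polynomial is strictly negative on that interval because its axis of symmetry is $pm/2$ and its value at $0$ is $p(1-m) < 0$, while the latter has its two real roots trapped in the open intervals $(1,2)$ and $(pm-2, pm-1)$. Neither case is compatible with the existence of $j$, which gives the contradiction.

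The only subtlety, and the main place where care is needed, is sign bookkeeping. Our $N_{0,0} = (pm+p+m-3)/8$ differs from the value $(p+1)(m-1)/8$ obtained in Proposition \ref{propofnull3} by precisely $(p-1)/4$; this shift is exactly cancelled by the change from $d(L(-p,1),0) = -(p-1)/4$ (used there after orientation reversal) to $d(L(p,1),0) = +(p-1)/4$ (used here directly) when one expands the $i=1$ formula. As a result the two Diophantine equations in $j$ reappear verbatim, and no fresh polynomial analysis beyond that already recorded in Proposition \ref{propofnull3} is required to finish the proof.
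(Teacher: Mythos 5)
Your proposal is correct and follows essentially the same approach as the paper's own proof: apply Proposition \ref{NiWu} directly to the $m$-surgery on $L(p,1)$ (no orientation reversal needed), compute $N_{0,0}=V_{0,0}=\tfrac{pm+p+m-3}{8}$ from the $i=0$ case, then use the $i=1$ case together with $V_{0,1}\in\{V_{0,0},V_{0,0}-1\}$ to derive exactly Equations (\ref{eq24}) and (\ref{eq25}), which have already been shown to have no integer root in $[0,pm-1]$. Your sign-bookkeeping remark explaining why the shift in $V_{0,0}$ is cancelled by the change from $d(L(-p,1),0)$ to $d(L(p,1),0)$ is a nice sanity check not made explicit in the paper, but the argument is the same.
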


\begin{proof}
Suppose that $L(-pm,1)$ is obtained by $m$-surgery along a null-homologous knot in $L(p,1)$ with $m \geq 3$ odd. Applying Formula (\ref{eq4}) to the case that $\mathfrak{t}=0$ is self-conjugate on $L(p,1)$ and $i=0$, we have
\begin{equation*}
d(L(-pm,1),\mathfrak{t}_0)=d(L(p,1),0)+d(L(m,1),0)-2N_{0,0},
\end{equation*}
where $N_{0,0}=\max\{V_{0,0}, V_{0,m}\}= V_{0,0}$ by monotonicity of $V_{\mathfrak{t},i}$. Also, $\mathfrak{t}_0$ is the unique self-conjugate $\rm Spin^c$ structure corresponding to $0$ on $L(-pm,1)$, so the above equation implies

\begin{align*}
N_{0,0}=V_{0,0}&=\dfrac{d(L(p,1),0)+d(L(m,1),0)+d(L(pm,1),0)}{2}\\
               &=\dfrac{p+m+pm-3}{8} > 0.
\end{align*}

Next we choose the self-conjugate $\mathfrak{t}=0$ on $L(p,1)$ and $i=1$. Then by Formula (\ref{eq4})
\begin{equation}
\label{eq9}
d(L(-pm,1),\mathfrak{t}_1)=d(L(p,1),0)+d(L(m,1),1)-2N_{0,1},
\end{equation}

where $N_{0,1}=\max\{V_{0,1}, V_{0,m-1}\}=V_{0,1}$. By Property \ref{monotonicity},
$V_{0,1}=V_{0,0} \,\,\, {\rm or} \,\,\, V_{0,0}-1$. 

\medskip\noindent
Case \romannumeral1: $V_{0,1}=V_{0,0}=\dfrac{p+m+pm-3}{8}$.\\
Equation (\ref{eq2}) implies
$d(L(-pm,1), \mathfrak{t}_1)=-\left(-\dfrac{1}{4}+ \dfrac{(2j-pm)^2}{4pm}\right)$ for some integer $j\in [0, pm-1]$.
Plugging it into (\ref{eq9}), we have
\[-\dfrac{1}{4}+ \dfrac{(2j-pm)^2}{4pm}+\dfrac{p-1}{4}+\left(-\dfrac{1}{4}+\dfrac{(2-m)^2}{4m}\right)-\dfrac{p+m+pm-3}{4}=0.\]
The equation can be simplified to
\[j^2-pmj+p-mp=0,\]
which is the same as (\ref{eq24}). As there is no integer $j$ in $[0,pm-1]$ that satisfies the equation, we can rule out this case.

\medskip\noindent
Case \romannumeral2: $V_{0,1}=V_{0,0}-1=\dfrac{p+m+pm-3}{8}-1$. \\
Similarly, by (\ref{eq9}), there exists an integer $j \in [0,pm-1]$ satisfying
\[-\dfrac{1}{4}+ \dfrac{(2j-pm)^2}{4pm}+\dfrac{p-1}{4}+\left(-\dfrac{1}{4}+\dfrac{(2-m)^2}{4m}\right)-\dfrac{p+m+pm-3}{4}+2=0,\]
which can be simplified to
\[j^2-pmj+pm+p=0.\]
This is the same equation as (\ref{eq25}).  As there is no integer $j$ in $[0,pm-1]$ that satisfies the equation, we can rule out this case.
\end{proof}

The last ingredient for proving  Theorem \ref{theoremgen} (\romannumeral1) is the following result due to Moore and Vazquez.

\begin{prop}[Corollary 3.7 in \cite{MV}]
\label{propofnull5}
Suppose $m>0$ is a square-free odd integer. There exists a distance one surgery along any knot $K$ in $L(m,1)$ yielding $-L(m,1)$ if and only if $m=1$ or $m=5$.
\end{prop}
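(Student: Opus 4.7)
The plan is to establish both directions separately. For the ``if'' direction, the case $m=1$ is immediate since $L(1,1)\cong S^3$, whose orientation reversal is itself, so $0$-surgery on any unknot realizes the claim; for $m=5$ I would exhibit an explicit band surgery from $T(2,5)$ to its mirror $T(2,-5)$ and lift it through the double branched cover to produce the desired distance one surgery $L(5,1) \to L(-5,1) = -L(5,1)$.

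For the ``only if'' direction, suppose $m\geq 3$ is squarefree odd and some distance one surgery along a knot $K\subset L(m,1)$ yields $-L(m,1)$. I would split by the homology class of $K$. If $K$ is null-homologous with integer slope $n$, then $|H_1|=mn=m$ forces $n=\pm 1$. I would then apply Proposition \ref{NiWu} at the unique self-conjugate $\mathrm{Spin}^c$ structure on $L(m,1)$, using Lemma \ref{t0LMV} to identify the corresponding self-conjugate structure on $-L(m,1)$, and compare with $d(-L(m,1),0)=-\frac{m-1}{4}$ using Equation \eqref{eq2}. The $i=0$ computation determines $V_{0,0}$ and already obstructs many residue classes of $m$ via the integrality requirement $V_{0,0}\in \mathbb{Z}_{\geq 0}$; a second evaluation at $i=1$, combined with the monotonicity Property \ref{monotonicity} (giving $V_{0,1}\in\{V_{0,0},V_{0,0}-1\}$), yields a quadratic in $j\in[0,m-1]$ whose roots one sign-analyzes exactly as in Propositions \ref{propofnull1}--\ref{propofnull4}, eliminating all $m>5$.

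If $K$ is homologically essential with winding number $k$ and surgery slope $m'$, then $|mm'-k^2|=m$, so $m\mid k^2$. The squarefree hypothesis then forces $m\mid k$, and writing $k=m\ell$ reduces to $m'=m\ell^2\pm 1$. At this point I would invoke a $d$-invariant surgery formula for homologically essential knots in $L(m,1)$ (the formula used in \cite{MV}, analogous to the prime-$p$ version developed in Section \ref{$d$-invariant surgery formula for homologically essential knots} of this paper) and extract constraints in parallel with the null-homologous case. The principal obstacle is this second case: one must select the correct relative $\mathrm{Spin}^c$ structure to trace the minimal grading in the mapping cone and a simple knot of winding number $k$ to pin down the grading shift, in order to obtain a sufficiently sharp inequality. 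Carrying this out leaves only $m=1$ and $m=5$ unobstructed, consistent with the explicit constructions in the ``if'' direction.
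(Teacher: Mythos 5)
This proposition is cited directly from Moore--Vazquez (\cite{MV}); the present paper does not prove it, so there is no ``paper's proof'' to compare against. Assessed on its own merits, your blind attempt has two issues, one of which is a genuine gap.

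First, the homologically essential case is actually vacuous, and no $d$-invariant argument is needed. If $K$ has winding number $k$ with $1 \leq k \leq m-1$, and $k'$ denotes the actual linking number of $K$ with the surgery unknot ($k' \equiv k \pmod m$), then $|H_1| = |mm' - (k')^2| = m$ forces $m \mid (k')^2$, hence $m \mid k'$ by squarefreeness, hence $k \equiv 0 \pmod m$ --- contradicting that $K$ is homologically essential. You derive $m \mid k$ correctly but then write $k = m\ell$ and keep going, not noticing that this already contradicts $0 < k < m$. Invoking a surgery formula for homologically essential knots here is a red herring.

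Second, and more seriously, in the null-homologous case the proposed ``$i=1$ evaluation'' does not exist. You correctly deduce that the surgery slope must be $n = \pm 1$, but then the modulus in Proposition \ref{NiWu} is $|n| = 1$, so the correspondence $i \leftrightarrow \mathfrak{t}_i$ runs over $\mathbb{Z}/1 = \{0\}$: there is only $i = 0$, and $V_{0,1}$ is not accessible through this formula. The $i=0$ computation gives $V_{0,0} = \frac{m-1}{4}$, whose integrality rules out $m \equiv 3 \pmod 4$, but that leaves $m = 13, 17, 29, 33, \dots$ untouched. Propositions \ref{propofnull1}--\ref{propofnull4} are not a template here precisely because those treat surgery coefficients $|m| \geq 3$, where $i = 1$ makes sense. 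Closing this gap requires a different input --- Moore--Vazquez's actual argument brings in the signature of the torus knot (this is the point of their paper's title), which is not recoverable from the $d$-invariant surgery formula alone in the $\pm 1$-surgery regime.
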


\begin{proof}[Proof of Theorem \ref{theoremgen} (\romannumeral1)]
By homological reasons, we may assume $n=\pm pm$ for some odd integer $m>0$.  Then, Propositions \ref{propofnull1} - \ref{propofnull4} imply that $n$ can only be $\pm p$. Lifting the band surgery shown in Figure \ref{bandsurgery1} to the double branched cover, we can see a distance one surgery along a null-homologous knot in $L(p,1)$ produces itself.  On the other hand, Proposition \ref{propofnull5} shows that a distance one surgery from $L(p,1)$ to $L(-p,1)$ exists if and only if $p=5$. This completes the proof.
\end{proof}

\section{$d$-invariant surgery formula for homologically essential knots}
\label{$d$-invariant surgery formula for homologically essential knots}
\subsection{The mapping cone for rationally null-homologous knots}


In this section, we give a $d$-invariant surgery formula for rationally null-homologous knots based on the mapping cone formula by Ozsv\'ath and Szab\'o \cite{OSr}.  We assume the readers are familiar with Heegaard Floer homology and we use $\mathbb{F}=\mathbb{Z}/2\mathbb{Z}$ coefficients throughout unless otherwise stated.

Let $Y$ be a rational homology sphere and $K$ an oriented knot in $Y$. There is a canonical choice of meridian $\mu$ of $K$, and a framing $\gamma$ is an embedded curve on the boundary of the tubular neighborhood of $K$ which intersects $\mu$ once transversely. We write $\underline{\rm Spin^c}(Y,K)$ for the relative $\rm Spin^c$ structures on $Y-K$, which has an affine identification with $H^2(Y,K)$. In particular, if the knot $K$ is primitive, i.e., $K$ generates $H_1(Y)$, then $\underline{\rm Spin^c}(Y,K)$ is affinely isomorphic to $\mathbb{Z}$.


Let $w$ be a vector field on $S^1 \times D^2$ as described in \cite{OSr}, which is also the so-called distinguished Euler structure in Turaev's literature \cite{Tur}. Gluing this vector field $w$ to a relative $\rm Spin^c$ structure on $Y-K$ gives us a natural map:
\[G_{Y, \pm K}: \underline{\rm Spin^c}(Y,K) \rightarrow {\rm Spin^c}(Y) \cong H^2(Y),\]
satisfying
\[G_{Y,\pm K}(\xi+\kappa)=G_{Y, \pm K}(\xi)+i^{\ast}(\kappa),\]
where $\kappa \in H^2(Y,K)$ and $i^{\ast}: H^2(Y,K) \rightarrow H^2(Y)$ is induced from inclusion. Here, $-K$ denotes $K$ with the opposite orientation. We have
\[G_{Y,-K}(\xi)=G_{Y,K}(\xi)+PD[K].\]

For each $\xi \in \underline{\rm Spin^c}(Y,K)$, there is a $\mathbb{Z} \oplus \mathbb{Z}$-filtered knot Floer complex $C_{\xi}=CFK^{\infty}(Y,K,\xi)$, whose bifiltration is given by $(i,j)=(algebraic, Alexander)$. Let $A^+_{\xi}=C_{\xi}\{\max\{i,j\} \geq 0\}$ and $B^+_{\xi}=C_{\xi}\{i \geq 0\}$. There are two natural projection maps
\[v^+_{\xi}: A^+_{\xi} \rightarrow B^+_{\xi}, \quad h^+_{\xi}: A^+_{\xi} \rightarrow B^+_{\xi+PD[\gamma]}.\]
Ozsv\'{a}th and Szab\'{o} show that $v^+_{\xi}$ and $h^+_{\xi}$ correspond to the negative definite cobordism maps $W'_n: Y_{\gamma+n\mu}(Y) \rightarrow Y$ for $n \gg 0$ equipped with certain $\rm Spin^c$ structures. See \cite[Theorem 4.1]{OSr} for details.

The Heegaard Floer homology of any $\rm Spin^c$ rational homology sphere contains a non-torsion submodule $\mathcal{T}^+=\mathbb{F}[U,U^{-1}]/U \cdot \mathbb{F}[U]$, called the {\it tower}. On the level of homology, both $v^+_{\xi}$ and $h^+_{\xi}$ induce grading homogeneous maps between towers, which are multiplication by $U^{N}$ for some integer $N \geq 0$. We denote the corresponding non-negative integers for $v^+_{\xi}$ and $h^+_{\xi}$ by $V_{\xi}$ and $H_{\xi}$ respectively, which are also known as the local $h$-invariants of Rasmussen \cite{Ras1}. An analogue of Property \ref{monotonicity} shows that for each $\xi \in \underline{\rm Spin^c}(Y,K)$,
\begin{equation}
\label{eq11}
V_{\xi} \geq V_{\xi+PD[\mu]} \geq V_{\xi}-1.
\end{equation}

Given any $\mathfrak{s} \in {\rm Spin^c}(Y_{\gamma}(K))$, let
\begin{align*}
\mathbb{A^+_{\mathfrak{s}}}&=\mathop{\bigoplus}_{\{\xi \in \underline{\rm Spin^c}(Y_{\gamma}(K),K_{\gamma})| G_{Y_{\gamma}(K),K_{\gamma}}(\xi)=\mathfrak{s}\}} A^+_{\xi}\\
\mathbb{B^+_{\mathfrak{s}}}&=\mathop{\bigoplus}_{\{\xi \in \underline{\rm Spin^c}(Y_{\gamma}(K),K_{\gamma})| G_{Y_{\gamma}(K),K_{\gamma}}(\xi)=\mathfrak{s}\}} B^+_{\xi},
\end{align*}
where $K_{\gamma}$ denotes the oriented dual knot of the knot $K$ in the surgered manifold $Y_{\gamma}(K)$, and $G_{Y_{\gamma}(K),K_{\gamma}}: \underline{\rm Spin^c}(Y_{\gamma}(K),K_{\gamma}) \rightarrow {\rm Spin^c}(Y_{\gamma}(K))$. Note that $\underline{\rm Spin^c}(Y,K)=\underline{\rm Spin^c}(Y_{\gamma}(K),K_{\gamma})$, since they both represent the set of the relative ${\rm Spin^c}$ structures on the knot complement $Y-K=Y_{\gamma}(K)-K_{\gamma}$. Let

\[D^+_{\mathfrak{s}}: \mathbb{A}^+_{\mathfrak{s}} \rightarrow \mathbb{B}^+_{\mathfrak{s}}, \quad
(\xi, a) \mapsto (\xi, v^+_{\xi}(a))+(\xi+PD[\gamma], h^+_{\xi}(a))\]
The knot Floer complex of the knot $K$ and the Heegaard Floer homology of the manifold obtained from distance one surgery along $K$ are related by:

\begin{theorem}[Ozsv\'{a}th-Szab\'{o}, Theorem 6.1 in \cite{OSr}]
\label{mappingconethm}
For any $\mathfrak{s} \in {\rm Spin^c}(Y_{\gamma})$, the Heegaard Floer homology $HF^+(Y_{\gamma}(K),\mathfrak{s})$ is the homology of the mapping cone $\mathbb{X}^+_{\mathfrak{s}}$ of the chain map $D^+_{\mathfrak{s}}: \mathbb{A}^+_{\mathfrak{s}} \rightarrow \mathbb{B}^+_{\mathfrak{s}}$.
\end{theorem}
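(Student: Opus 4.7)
The plan is to deduce the mapping cone formula by combining the surgery exact triangle with the large surgery formula, which describes $HF^+$ of manifolds obtained by sufficiently large framing surgery directly in terms of the knot Floer complex. First, I would invoke the surgery exact triangle applied to the triple of framings $(\infty, \gamma, \gamma+N\mu)$ on $K$ for $N$ a large positive integer; this yields a long exact sequence relating $HF^+(Y)$, $HF^+(Y_\gamma(K))$ and $HF^+(Y_{\gamma+N\mu}(K))$, which refines over $\rm Spin^c$ structures.

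The large surgery formula, established earlier via a direct analysis of Heegaard diagrams in which the knot $K$ winds many times around the meridian, identifies $HF^+(Y_{\gamma+N\mu}(K), \mathfrak{t})$ with the homology of $A^+_\xi$ for the relative $\rm Spin^c$ structure $\xi$ corresponding to $\mathfrak{t}$ under Turaev's reparametrization $G_{Y_{\gamma+N\mu}(K),\, K_{\gamma+N\mu}}$. Moreover, the two cobordism maps from $Y_{\gamma+N\mu}(K)$ to $Y$ and to $Y_\gamma(K)$, equipped with the appropriate canonical $\rm Spin^c$ extensions that differ by $PD[\gamma]$, are identified with the projections $v^+_\xi$ and $h^+_\xi$ into the $B^+$ complexes.

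Assembling these identifications over all $\xi$ with $G_{Y_\gamma(K),\, K_\gamma}(\xi) = \mathfrak{s}$, the two edges of the surgery triangle package exactly into the chain map $D^+_\mathfrak{s}: \mathbb{A}^+_\mathfrak{s} \rightarrow \mathbb{B}^+_\mathfrak{s}$. A truncation/direct-limit argument is then needed: for $N$ sufficiently large, the homologies $H_*(A^+_\xi)$ and $H_*(B^+_\xi)$ stabilize in any fixed grading range, and the long exact sequence collapses to an isomorphism between $HF^+(Y_\gamma(K), \mathfrak{s})$ and the homology of the mapping cone of $D^+_\mathfrak{s}$.

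The main obstacle is the $\rm Spin^c$ bookkeeping. One must verify that as $\xi$ ranges over $\underline{\rm Spin^c}(Y_\gamma(K), K_\gamma)$, the pair $\bigl(G_{Y,K}(\xi),\, G_{Y,K}(\xi) + PD[K]\bigr)$ and the shift by $PD[\gamma]$ account precisely for the two $\rm Spin^c$ extensions of each class across the relevant cobordism, so that the two terms in $D^+_\mathfrak{s}$ land in the correct summands of $\mathbb{B}^+_\mathfrak{s}$. This requires compatibility of Turaev's distinguished Euler structure with the gluing of cobordisms at each stage, together with naturality of the cobordism maps under composition; once these are in place, the fact that $D^+_\mathfrak{s}$ is a chain map whose mapping cone computes $HF^+(Y_\gamma(K), \mathfrak{s})$ follows from the surgery exact triangle.
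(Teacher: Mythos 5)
This statement is cited in the paper as an established result --- Theorem 6.1 of Ozsv\'ath--Szab\'o's ``Knot Floer homology and rational surgeries'' --- and the paper gives no proof of its own, so there is no internal argument to compare against. That said, your sketch captures the broad architecture of the Ozsv\'ath--Szab\'o proof: the large surgery formula identifying $HF^+(Y_{\gamma+N\mu}(K),\mathfrak{t})$ with $H_*(A^+_\xi)$, the identification of the two cobordism maps out of the large surgery with $v^+_\xi$ and $h^+_\xi$ (with the $PD[\gamma]$ shift between the two $\rm Spin^c$ extensions), the $\rm Spin^c$ reparametrization via $G_{Y,\pm K}$ and the distinguished Euler structure, and a stabilization/truncation argument to pass from finite $N$ to the full mapping cone.

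There is one genuine technical gap. The triple $(\infty,\gamma,\gamma+N\mu)$ is \emph{not} a surgery triad for $N>1$: writing slopes in $(\mu,\lambda)$--coordinates, the distance between $\gamma=m\mu+\lambda$ and $\gamma+N\mu=(m+N)\mu+\lambda$ is $|m\cdot 1 - 1\cdot(m+N)|=N$, so the surgery exact triangle does not directly apply to this triple. The actual proof iterates the exact triangle through the distance-one triples $(\infty,\gamma+i\mu,\gamma+(i+1)\mu)$ for $i=0,\dots,N-1$, producing a staircase of exact sequences, and then a homological-algebra lemma for iterated mapping cones (together with the fact that for $N$ large the $v^+$-type maps into the outlying $B^+$ summands become isomorphisms, allowing truncation) identifies the homology of the resulting complex with $HF^+(Y_\gamma(K),\mathfrak{s})$. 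Your invocation of ``a truncation/direct-limit argument'' gestures at this, but without replacing the single invalid triangle by the iterated staircase the proposal as written would not close up. The $\rm Spin^c$ bookkeeping paragraph is exactly the right concern and is, as you say, where much of the genuine work in the original proof is concentrated.
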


Ozsv\'{a}th and Szab\'{o} show that there exist grading shifts on $\mathbb{A}^+_{\mathfrak{s}}$ and $\mathbb{B}^+_{\mathfrak{s}}$, which gives a consistent relative $\mathbb{Z}$-grading on $\mathbb{X}^+_{\mathfrak{s}}$. Actually, the shift can be fixed such that the grading is the same as the absolute $\mathbb{Q}$-grading of $HF^+(Y_{\gamma}(K), \mathfrak{s})$. It is important to point out that these shifts only depend on the homology class of the knot.

Denote
\[\mathfrak{A}^+_{\xi}=H_{\ast}(A^+_{\xi}) \,\, ({\rm resp.} \,\, \mathfrak{B}^+_{\xi}=H_{\ast}(B^+_{\xi})), \qquad \mathfrak{A}^+_{\mathfrak{s}}=H_{\ast}(\mathbb{A}^+_{\mathfrak{s}}) \,\, ({\rm resp.} \,\, \mathfrak{B}^+_{\mathfrak{s}}=H_{\ast}(\mathbb{B}^+_{\mathfrak{s}})). \]
Let
\[\mathfrak{v}^+_{\xi}: \mathfrak{A}^+_{\xi} \rightarrow \mathfrak{B}^+_{\xi}, \;\;\mathfrak{h}^+_{\xi}: \mathfrak{A}^+_{\xi} \rightarrow \mathfrak{B}^+_{\xi+PD[\gamma]}\]
be the maps induced on homology by $v^+_{\xi}$ and $h^+_{\xi}$ respectively, and let
\[\mathfrak{D}^+_{\mathfrak{s}}: \mathfrak{A}^+_{\mathfrak{s}} \rightarrow \mathfrak{B}^+_{\mathfrak{s}}\]
be the map induced on homology by $D^+_{\mathfrak{s}}$. Theorem \ref{mappingconethm} implies the exact triangle
\begin{displaymath}
\xymatrix{
\mathfrak{A}^+_{\mathfrak{s}} \ar[r]^{\mathfrak{D}^+_{\mathfrak{s}}} & \mathfrak{B}^+_{\mathfrak{s}} \ar[d]^{{incl}_{\ast}} \\
 & HF^+(Y_{\gamma}(K),\mathfrak{s}) \ar[lu]^{{proj}_{\ast}}}
\end{displaymath}
Therefore, to compute either $HF^+(Y_{\gamma}(K),\mathfrak{s})$ or $d(Y_{\gamma}(K), \mathfrak{s})$, we study the kernel and cokernel of the map $\mathfrak{D}^+_{\mathfrak{s}}$.




Finally, we remark that there is an analogous mapping cone formula for the hat version of Heegaard Floer homology. One can define $\widehat{A}_{\xi}$, $\widehat{B}_{\xi}$, $\widehat{D}_{\xi}$ and the mapping cone $\widehat{\mathbb{X}}_{\mathfrak{s}}$ of $\widehat{D}_{\xi}$, and the Heegaard Floer homology $\widehat{HF}(Y_{\gamma}(K),\mathfrak{s})$ can be calculated by the homology of $\widehat{\mathbb{X}}_{\mathfrak{s}}$.

\subsection{Simple knots in lens spaces}

To compute the $d$-invariant of the surgered manifold $Y_{\gamma}(K)$, we need to fix the grading shift in the mapping cone formula. 
Since the grading shift only depends on the homology class of the knot, we may want to find it using a knot of the same homology class with simpler knot Floer complex. Simple knots in lens spaces will play such a role.  

For a lens space $L(p,q)$, there is a standard genus one Heegaard diagram (e.g., $L(5,1)$ in Figure \ref{simpleknot}), where we identify opposite sides of a rectangle to give a torus. We use a horizontal red curve to represent the $\alpha$ curve and use a blue curve of slope $p/q$ to represent the $\beta$ curve.  They intersect at $p$ points, $x_0, x_1, \dots , x_{p-1}$, where we label them in the order they appear on the $\alpha$ curve.  The $\alpha$ (resp. $\beta$) curve gives a solid torus $U_{\alpha}$ (resp. $U_{\beta}$).
\begin{figure}[!h]
\centering
\includegraphics[width=2.5in]{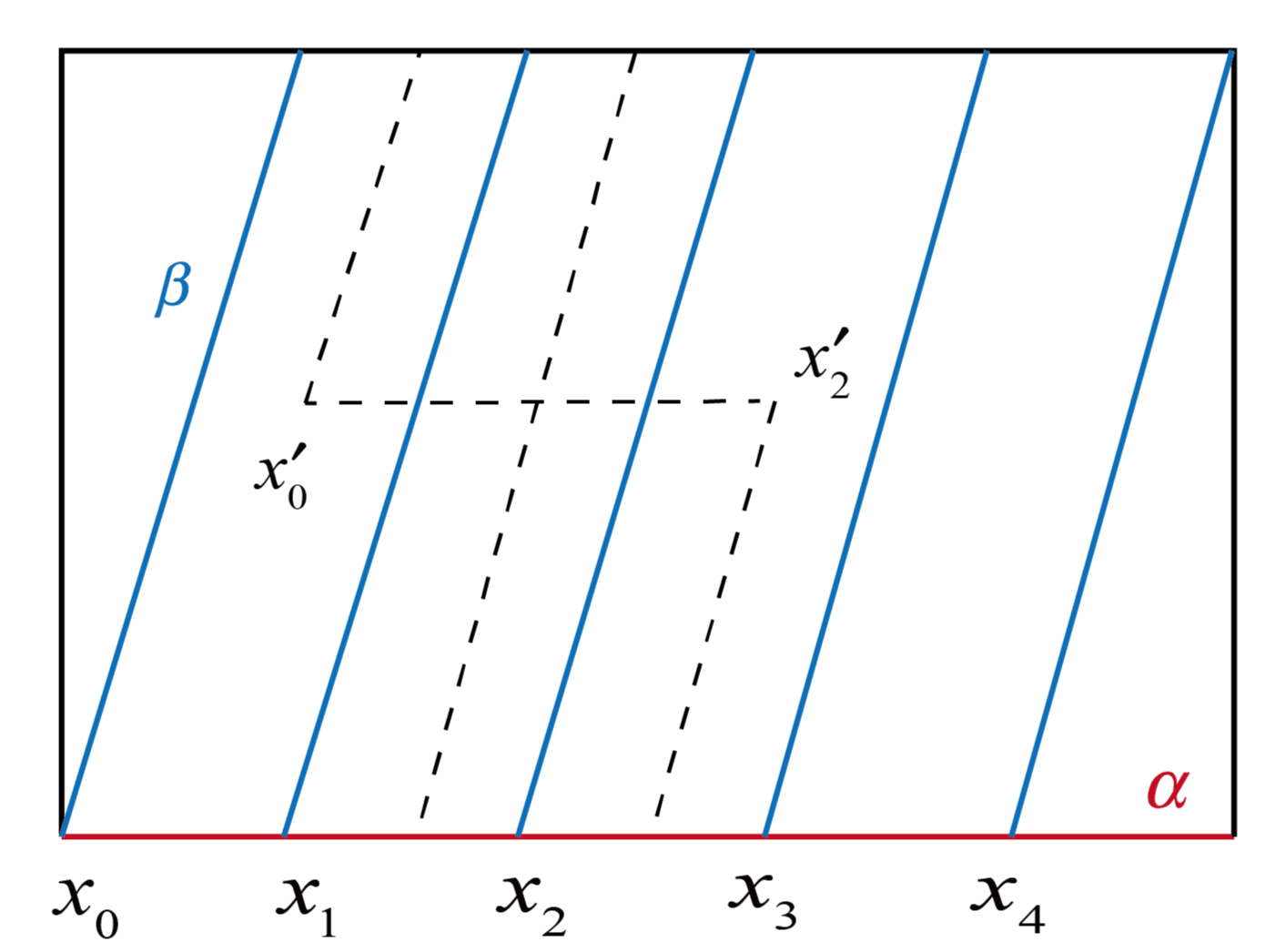}
\caption{An example of a simple knot $K(5,1,2)$ in $L(5,1)$}\label{simpleknot}
\end{figure}

\begin{definition}
The {\it simple knot} $K(p,q,k) \subset L(p,q)$ is an oriented knot defined as the union of the arc joining $x_0$ to $x_k$ in $U_{\alpha}$ and the arc joining $x_k$ to $x_0$ in $U_{\beta}$.
\end{definition}
To draw the simple knot $K(p,q,k)$ in the Heegaard diagram, we place two points $x'_0$ and $x'_k$ next to $x_0$ and $x_k$ respectively, and connect them in $U_{\alpha}$ and $U_{\beta}$, e.g., $K(5,1,2)$ in $L(5,1)$ illustrated in Figure \ref{simpleknot}.


In our case, the lens space is $L(p,1)$ for some prime number $p \geq 5$, and we consider simple knots $K(p,1,k)$ in $L(p,1)$. If we represent $K(p,1,k)$ in the standard genus one Heegaard diagram as described above, then the intersection points $x_0, \dots , x_{p-1}$ represent $p$ different $\rm Spin^c$ structures. Let $\eta(x_i, x_j)$ denote the one chain constructed by going from $x_i$ to $x_j$ along $\alpha$ curve and from $x_j$ to $x_i$ along $\beta$ curve. The relative Alexander grading of $x_i$ and $x_j$ is defined as
\[A(x_i, x_j)=[\eta(x_i, x_j)] \in H_1(L(p,1)-K(p,1,k)) \cong \mathbb{Z}.\]
We can fix the absolute Alexander gradings such that these values are symmetric about $0$. 

\begin{example}
Consider the simple knot $K(5,1,2)$ in Figure \ref{simpleknot}. One can check that $[\eta(x_3,x_4)]=[\eta(x_4,x_0)]=[\eta(x_0,x_1)]=-2x$ and $[\eta(x_1,x_2)]=[\eta(x_2,x_3)]=3x$ for a generator $x \in H_1(L(5,1)-K(5,1,2))\cong \mathbb{Z}$. Hence the absolute Alexander gradings of $x_3, x_4, x_2, x_0$ and $x_1$ are $-3$, $-1$, $0$, $1$ and $3$ respectively. Here, we fix the absolute Alexander grading by making it symmetric about $0$.
\end{example}

For a general simple knot $K(p,1,k)$ in $L(p,1)$, we have that $$[\eta(x_{k+1},x_{k+2})]=[\eta(x_{k+2},x_{k+3})]=\cdots=[\eta(x_{p-1},x_{0})]=[\eta(x_{0},x_{1})]=-kx$$
$$[\eta(x_{1},x_{2})]=\cdots=[\eta(x_{k},x_{k+1})]=(p-k)x,$$
where $x$ is a generator of $H_1(L(p,1)-K(p,1,k))$. Thus, the Alexander gradings of the $p$ points are
\begin{align}
&0, \pm (p-k), \pm 2(p-k), \dots, \pm \frac{k}{2} \cdot (p-k), \pm \frac{k}{2}, \pm \frac{3k}{2}, \dots, \pm (p-k-2) \cdot \frac{k}{2}   \quad {\rm when} \,\, k \,\,{\rm is \,\, even;} \label{eq28}\\
&0, \pm k, \pm 2k, \dots, \pm \frac{p-k}{2} \cdot k, \pm \frac{p-k}{2}, \pm \frac{3(p-k)}{2}, \dots, \pm (k-2) \cdot \frac{(p-k)}{2}  \quad {\rm when} \,\, k \,\,{\rm is \,\, odd.}\label{eq29}
\end{align}
Note that in either case, the largest and smallest Alexander grading are $\pm \frac{k}{2} \cdot (p-k)$ respectively.


For our purpose, we also introduce Rasmussen's notation for computing the hat version of the mapping cone formula \cite{Ras2}. We represent the chain complex $\widehat{\mathfrak{D}}_{\mathfrak{s}}: \widehat{\mathfrak{A}}_{\mathfrak{s}} \rightarrow \widehat{\mathfrak{B}}_{\mathfrak{s}}$ for a simple knot $K(p,1,k)$ by a type of diagram shown in Figure \ref{mapping cone of K(5,1,2) 1-surgery}: Here, the upper row of the diagram represents $\widehat{\mathfrak{A}}_{\xi}$, while the lower row of the diagram represents $\widehat{\mathfrak{B}}_{\xi}$. We denote $\widehat{\mathfrak{A}}_{\xi}$ by a $+$ if $\widehat{\mathfrak{v}}_{\xi}$ is nontrivial but $\widehat{\mathfrak{h}}_{\xi}$ is trivial, and we denote $\widehat{\mathfrak{A}}_{\xi}$ by a $-$ if  $\widehat{\mathfrak{h}}_{\xi}$ is nontrivial but  $\widehat{\mathfrak{v}}_{\xi}$ is trivial. Denote $\widehat{\mathfrak{A}}_{\xi}$ by a $\circ$ if both $\widehat{\mathfrak{v}}_{\xi}$ and $\widehat{\mathfrak{h}}_{\xi}$ are nontrivial. Each $\widehat{\mathfrak{B}}_{\xi}$ are represented by a filled circle. Nontrivial maps are indicated by arrows, and trivial maps are omitted.

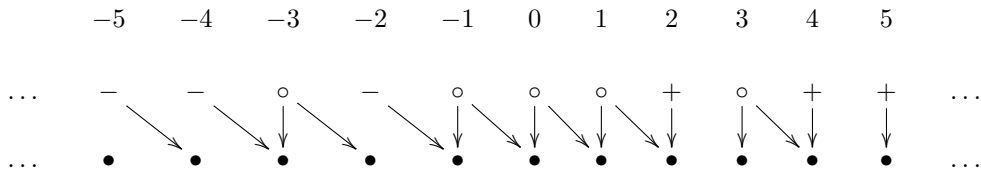
\begin{figure}[!h]
\begin{minipage}[b]{1\textwidth}
\centering
\begin{displaymath}
\xymatrixrowsep{5mm}
\xymatrixcolsep{5mm}
\xymatrix{
 & -5 & -4 & -3 & -2 & -1 & 0 & 1 & 2 & 3 & 4 & 5 & \\
\dots & - \ar[rd] & - \ar[rd] & \circ \ar[d] \ar[rd]& - \ar[rd] & \circ \ar[d] \ar[rd] & \circ \ar[d] \ar[rd] & \circ \ar[d] \ar[rd] & + \ar[d] & \circ \ar[d] \ar[rd] & + \ar[d] & + \ar[d] & \dots\\
\dots & \bullet & \bullet & \bullet  & \bullet & \bullet & \bullet & \bullet & \bullet & \bullet & \bullet & \bullet & \dots}
\end{displaymath}
\end{minipage}
\caption{The mapping cone of 1-surgery along $K(5,1,2)$. Note that all the elements on the left of -5 are $-$, and all the elements on the right of 5 are $+$.} 
\label{mapping cone of K(5,1,2) 1-surgery}
\end{figure}

The complex $\widehat{\mathfrak{D}}_{\mathfrak{s}}: \widehat{\mathfrak{A}}_{\mathfrak{s}} \rightarrow \widehat{\mathfrak{B}}_{\mathfrak{s}}$ can be decomposed into summands corresponding to the connected components of the diagram. For each summand, we denote it by an interval $[a,b]$, where $a$ and $b$ are labeled with a $+$ or $-$ and all the elements in between are $\circ$. We can see that summands of types $[+,+]$ and $[-,-]$ are acyclic and summands
of types $[-,+]$ and $[+,-]$ have homology of rank one. Moreover, when the summand is type $[-,+]$, the homology group $\mathbb{F}$ is supported by an element in the top row (i.e. in the kernel of $\widehat{\mathfrak{D}}_{\mathfrak{s}}$), and when the summand is type $[+,-]$, the homology group $\mathbb{F}$ is supported in the bottom row (i.e. the cokernel of $\widehat{\mathfrak{D}}_{\mathfrak{s}}$).





\subsection{The proof of the $d$-invariant surgery formula for homologically essential knots}

In this section, we will deduce our $d$-invariant surgery formula for homologically essential knots in $L(p,1)$. We split it into two cases $m>k^2/p$ and $m<k^2/p$ because the truncated mapping cones are different in the two cases. 

\begin{prop}
\label{prop ratinoal d-inv1}
Let $Y=L(p,1)$ with $p \geq 5$ prime and $K$ be a homologically essential knot in $Y$ with winding number $1 \leq k \leq \frac{p-1}{2}$. Suppose that $Y'$ is an L-space obtained from $\gamma=(m\mu+\lambda)$-surgery on $K$ with $m>k^2/p$, and $|H_1(Y')|\geq 5$ is odd.  If $p=5,7$ or $m \geq \frac{(p+k) \cdot k}{2p} +1$,  then there exists a non-negative integer $V_{\xi_0}$, a unique self-conjugate $\rm Spin^c$ structure $\mathfrak{t}$ on $Y'$ and a unique self-conjugate $\rm Spin^c$ structure $\mathfrak{t}_M$ on the Seifert fiber space $M(0,0;(m-k,1),(p-k,1),(k,1))$, abbreviated $M$, satisfying
\begin{equation}
\label{dinv1}
d(Y',\mathfrak{t})=d(M,\mathfrak{t}_M)-2V_{\xi_0}.
\end{equation}
If, in addition, $V_{\xi_0} \geq 2$, then there exists $V_{\xi_0+PD[\mu]}$ satisfying $V_{\xi_0}-1 \leq$ $V_{\xi_0+PD[\mu]} \leq V_{\xi_0}$ and
\begin{equation}
\label{dinv2}
d(Y',\mathfrak{t}+i^{\ast}PD[\mu])=d(M,\mathfrak{t}_M+i^{\ast}PD[\mu])-2V_{\xi_0+PD[\mu]},
\end{equation}
where $i: Y-K \rightarrow Y_{\gamma}(K)$ is inclusion.
\end{prop}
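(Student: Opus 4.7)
The plan is to apply the mapping cone formula (Theorem~\ref{mappingconethm}) and extract the $d$-invariant of $Y'$ by locating the minimal-grading tower element, then fix the overall grading shift by comparison with a reference simple knot in the same homology class whose surgery is already understood.

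First, I would choose the relative Spin$^c$ structure $\xi_0 \in \underline{\rm Spin^c}(Y,K)$ to be the symmetric one from Lemma~\ref{propsym}, so that $V_{\xi_0} = H_{\xi_0}$ and the minimal grading element of the mapping cone is supported at the $\xi_0$-summand of $\mathfrak{B}^+_{\mathfrak{s}}$. Set $\mathfrak{t} := G_{Y_\gamma(K),K_\gamma}(\xi_0) \in \mathrm{Spin}^c(Y')$. The hypothesis that $|H_1(Y')|$ is odd and $\geq 5$ forces $\mathfrak{t}$ to be the unique self-conjugate Spin$^c$ structure to which the argument below applies, removing ambiguities. The hypothesis $m > k^2/p$ guarantees a well-defined truncation of the cone: outside a finite window of relative Spin$^c$ structures one of $v^+_{\xi}$, $h^+_\xi$ is an isomorphism on the tower, so only finitely many summands $A^+_\xi$, $B^+_\xi$ contribute to the bottom of $HF^+(Y',\mathfrak{t})$.

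Second, from the exact triangle for $\mathfrak{D}^+_\mathfrak{s}$, the symmetry of $\xi_0$, and the fact that $\mathfrak{v}^+_{\xi_0}$ acts on the tower by $U^{V_{\xi_0}}$, one reads off
\[
d(Y',\mathfrak{t}) \;=\; S - 2V_{\xi_0},
\]
where $S = S(p,k,m)$ is a grading shift depending only on the homology class $[K]=k[c]$ and on the surgery parameter, not on the particular knot. To fix $S$, I would run the identical computation for the simple knot $K_s := K(p,1,k) \subset L(p,1)$, which has the same homology class as $K$ and hence the same $S$. Realizing $K_s$ as a $(k,p-k)$-curve on the Heegaard torus, a standard Kirby/Seifert-fibration calculation shows that $(m\mu+\lambda)$-surgery along $K_s$ yields precisely $M = M(0,0;(m-k,1),(p-k,1),(k,1))$. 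Under the assumption $p \in \{5,7\}$ or $m \geq (p+k)k/(2p) + 1$, the manifold $M$ is an L-space, and the $V$-invariant of $K_s$ at the symmetric relative Spin$^c$ structure vanishes (simple knots minimize $V$). Applying the boxed equation above to $K_s$ therefore gives $d(M,\mathfrak{t}_M) = S$, and substituting back establishes \eqref{dinv1}. For \eqref{dinv2}, I would rerun the same recipe at $\xi_0 + PD[\mu]$; the monotonicity \eqref{eq11} supplies the inequality $V_{\xi_0}-1 \leq V_{\xi_0 + PD[\mu]} \leq V_{\xi_0}$, while the assumption $V_{\xi_0}\geq 2$ keeps the relevant element inside the untruncated part of the cone so that the grading bookkeeping is unchanged.

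The main obstacle is the three-part verification that makes the cancellation of $S$ rigorous: (i) recognizing $(m\mu+\lambda)$-surgery on $K_s$ as the stated Seifert fiber space $M$; (ii) checking $M$ is an L-space exactly in the stated range of $(p,k,m)$ and computing its $d$-invariants (via Ozsv\'ath--Szab\'o plumbings or N\'emethi's lattice-cohomology formula) together with showing $V_{\xi_0}(K_s) = 0$; and (iii) tracking the Spin$^c$-structure correspondences between $Y-K$, $Y'$, $Y-K_s$, and $M$ consistently so that the shifts really cancel termwise. Step (ii) is the most technical: computing $d$-invariants of a three-fibered Seifert manifold is substantially heavier than the lens-space computation carried out in \cite{LMV}, and the stated numerical hypotheses on $p$ and $m$ are exactly what is needed to make the truncation in step (i) and the L-space property in step (ii) go through simultaneously.
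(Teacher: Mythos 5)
Your overall strategy is the one the paper uses: compute $d(Y',\mathfrak t)$ from the mapping cone by locating the bottom of the tower at the symmetric relative $\rm Spin^c$ structure $\xi_0$ (as in Lemma~\ref{propsym}, $\xi_0$ is the unique relative $\rm Spin^c$ structure with vanishing Alexander grading), and then cancel the unknown grading shift $\sigma(\xi_0)$ by running the identical computation for the simple knot $K(p,1,k)$, whose $(m\mu+\lambda)$-surgery is the Seifert fiber space $M$. However, there is a genuine gap in your step (ii).

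You assert that under the hypotheses $p\in\{5,7\}$ or $m\ge (p+k)k/(2p)+1$, the Seifert fiber space $M=M(0,0;(m-k,1),(p-k,1),(k,1))$ is an $L$-space, and you use this to invoke the analogue of Lemma~\ref{lemmaxi0} for $K(p,1,k)$. That claim is false in general, and the paper explicitly flags this: ``one cannot directly apply Lemma~\ref{lemmaxi0} here because the surgered manifold $M(0,0;(m-k,1),(p-k,1),(k,1))$ is not necessarily an $L$-space.'' Your hypotheses on $p$ and $m$ do \emph{not} encode an $L$-space condition on $M$; rather, they encode the numerical condition $pm-k^2 \ge \tfrac{k}{2}(p-k)$ (respectively $\ge \tfrac{k}{2}(p-k)+p$ for \eqref{dinv2}), which guarantees that the surgery coefficient is large relative to the genus of the simple knot. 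What that buys you is that the \emph{hat-version} mapping cone $\widehat{\mathbb X}_{\mathfrak s_0}$ for $K(p,1,k)$ is ``well-ordered'' in the sense of Figure~\ref{largemappingcone}: a single $[-,+]$ summand in the middle with uniformly $-$ to the left and $+$ to the right. From that one deduces directly that $\widehat{\Pi}_{\xi_0}:\widehat{\mathbb X}_{\mathfrak s_0}\to\widehat A_{\xi_0}$ is a quasi-isomorphism, hence $\Pi^+_{\xi_0}:\mathbb X^+_{\mathfrak s_0}\to A^+_{\xi_0}$ is too, so the bottom of $HF^+(M,\mathfrak s_0)$ is supported in $\mathfrak A^+_{\xi_0}$ --- for this one fixed $\rm Spin^c$ structure --- without any global $L$-space assumption on $M$. (For $p=5,7$ the small cases where the bound fails are checked by hand, e.g., Figure~\ref{mapping cone of K(5,1,2) 1-surgery}.) Your argument as written has no replacement for this step, and if you literally take $M$ to be an $L$-space you are using a hypothesis that is simply not available.

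A smaller slip: you say the minimal-grading element is supported in the $\xi_0$-summand of $\mathfrak B^+_{\mathfrak s}$, but Lemma~\ref{lemmaxi0} (and the paper's derivation of Equation~\eqref{eq12}) places it in $\mathfrak A^+_{\xi_0}$; that is what produces the $-2V_{\xi_0}$ term. The rest of your outline --- the identification of the Seifert fibration of surgery on $K(p,1,k)$, the monotonicity bound on $V_{\xi_0+PD[\mu]}$, the use of $V_{\xi_0}\ge 2$ via Lemma~\ref{lemmaxi0+PDu} --- matches the paper. To repair your proof you should replace ``$M$ is an $L$-space'' with an explicit verification that the (hat) mapping cone of $K(p,1,k)$ is well-ordered at $\xi_0$ (and at $\xi_0+PD[\mu]$) under the stated numerical hypotheses, and then argue as in the paper that this suffices to localize the $d$-invariant of $M$ in $\mathfrak A^+_{\xi_0}$ and to conclude $V_{\xi_0}(K(p,1,k))=0$.
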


\begin{prop}
\label{prop ratinoal d-inv2}
Given $Y$, $K$ and $k$ as above, suppose that $Y'$ is an L-space obtained from $\gamma=(m\mu+\lambda)$-surgery on $K$ with $m<k^2/p$, and $|H_1(Y')| \neq 1$ is odd. If $p=5,7$ or $m \leq \frac{(3k-p) \cdot k}{2p} -1$, then there exists a non-negative integer $V_{\xi_0}$, a unique self-conjugate $\rm Spin^c$ structure $\mathfrak{t}$ on $Y'$ and a unique self-conjugate $\rm Spin^c$ structure $\mathfrak{t}_M$ on the Seifert fiber space $M(0,0;(m-k,1),(p-k,1),(k,1))$, abbreviated $M$, satisfying
\begin{equation}
\label{dinv3}
d(Y',\mathfrak{t})=d(M,\mathfrak{t}_M)+2V_{\xi_0}.
\end{equation}
If, in addition, $V_{\xi_0} \geq 2$, then there exists $V_{\xi_0+PD[\mu]}$ satisfying $V_{\xi_0}-1 \leq$ $V_{\xi_0+PD[\mu]} \leq V_{\xi_0}$ and
\begin{equation}
\label{dinv4}
d(Y',\mathfrak{t}+i^{\ast}PD[\mu])=d(M,\mathfrak{t}_M+i^{\ast}PD[\mu])+2V_{\xi_0+PD[\mu]},
\end{equation}
where $i: Y-K \rightarrow Y_{\gamma}(K)$ is inclusion.
\end{prop}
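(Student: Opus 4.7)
The plan is to adapt the argument of Proposition \ref{prop ratinoal d-inv1} to the regime $m<k^2/p$, in which $pm-k^2<0$. The principal change is that the surgery framing is effectively ``negative'' relative to the rational Seifert framing of $K$, which flips the direction of the truncation of the mapping cone. Consequently the minimal-grading tower now sits on the $\mathbb{A}^+$-side rather than on the $\mathbb{B}^+$-side, which accounts for the sign change of the $V_{\xi_0}$ term from $-2V_{\xi_0}$ to $+2V_{\xi_0}$.

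First I would set up the mapping cone from Theorem \ref{mappingconethm} for $\gamma=m\mu+\lambda$-surgery on $K$ in $Y$, and choose the origin $\xi_0\in\underline{\mathrm{Spin}^c}(Y,K)\cong\mathbb{Z}$ using the same symmetry property (Lemma \emph{propsym}) employed in Proposition \ref{prop ratinoal d-inv1}. Since $p$ is odd, $G_{Y,K}(\xi_0)$ is the unique self-conjugate class on $Y$; and since $|H_1(Y')|$ is odd, $\mathfrak{t}:=G_{Y_\gamma(K),K_\gamma}(\xi_0)$ is the unique self-conjugate class on $Y'$. Running the same construction on the simple knot $K(p,1,k)$, whose $\gamma$-surgery produces $M=M(0,0;(m-k,1),(p-k,1),(k,1))$ with $|H_1(M)|=|pm-k^2|$ odd, yields a unique self-conjugate $\mathfrak{t}_M$ on $M$.

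Second, I would identify the truncated mapping cone in the regime $m<k^2/p$. The pre-images of $\mathfrak{t}$ under $G_{Y_\gamma(K),K_\gamma}$ form a $\mathbb{Z}$-coset spaced by $PD[\gamma]$, and the hypothesis $p\in\{5,7\}$ or $m\leq (3k-p)k/(2p)-1$ ensures that both $\xi_0$ and $\xi_0+PD[\mu]$ (when $V_{\xi_0}\geq 2$) lie inside a summand of the form $[+,-]$ in Rasmussen's diagrammatic notation. In such a summand the homology is one-dimensional and is supported at the unique $\mathbb{A}^+$-generator carrying the tower; the local $h$-invariant $V_{\xi_0}$ then controls how far this tower survives in $\mathrm{HF}^+(Y',\mathfrak{t})$, while the monotonicity \eqref{eq11} gives the bound $V_{\xi_0}-1\leq V_{\xi_0+PD[\mu]}\leq V_{\xi_0}$ needed for \eqref{dinv4}.

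Third, to pin down the absolute grading shift I would compare with the simple knot $K(p,1,k)$, which represents the same homology class as $K$ and therefore induces the same grading shifts in the mapping cone formula. The knot Floer complex of $K(p,1,k)$ has a one-dimensional generator in each Alexander filtration level dictated by \eqref{eq28}--\eqref{eq29}, so its local $h$-invariants are computable by inspection; in particular, under the stated numerical hypothesis, $V^{K(p,1,k)}_{\xi_0}=V^{K(p,1,k)}_{\xi_0+PD[\mu]}=0$. Since the two mapping cones differ only by the non-negative shifts $2V^K_{\xi_0}$ and $2V^K_{\xi_0+PD[\mu]}$ of the minimal-grading element, subtracting the two computations gives
\[
d(Y',\mathfrak{t})-d(M,\mathfrak{t}_M)\;=\;2V_{\xi_0},\qquad d(Y',\mathfrak{t}+i^{\ast}PD[\mu])-d(M,\mathfrak{t}_M+i^{\ast}PD[\mu])\;=\;2V_{\xi_0+PD[\mu]},
\]
which are precisely \eqref{dinv3} and \eqref{dinv4}. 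The hard step I expect is verifying that in the $m<k^2/p$ regime the minimal-grading tower is indeed realized on the $\mathbb{A}$-side at $\xi_0$ (and not lost to a cancellation with an adjacent $\mathbb{B}^+$ summand), together with computing $d(M,\mathfrak{t}_M)$ from a plumbing presentation of $M$; the quantitative hypotheses $p\in\{5,7\}$ or $m\leq (3k-p)k/(2p)-1$ are tailored precisely to make the $[+,-]$ summand containing $\xi_0$ wide enough for this analysis to go through.
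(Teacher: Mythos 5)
Your proposal diverges sharply from the paper's argument and contains a real error that prevents it from working as stated. The paper does \emph{not} re-run the mapping-cone analysis in the regime $pm-k^2<0$; it instead observes that $-Y'$ is obtained by $(-m\mu+\lambda)$-surgery on a knot in $L(-p,1)$, which puts it squarely back into the regime of Proposition~\ref{prop ratinoal d-inv1}, and then reverses orientation on both sides of Equations~\eqref{dinv1}--\eqref{dinv2}. Since $d(-Y,\mathfrak{t})=-d(Y,\mathfrak{t})$, the sign of the $2V_{\xi_0}$ term flips for free, and the only additional check needed is that the numerical hypothesis $m\leq\frac{(3k-p)k}{2p}-1$ (equivalently $pm-k^2\leq -\frac{k}{2}(p-k)-p$) makes the reversed mapping cone for the simple knot well-ordered. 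That is the entire content of the paper's proof.

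Your direct argument has a concrete gap. You claim that for $m<k^2/p$ the summand containing $\xi_0$ is of type $[+,-]$ and that ``the homology is one-dimensional and is supported at the unique $\mathbb{A}^+$-generator carrying the tower.'' This is backwards. In the paper's own discussion of Rasmussen's notation, the homology of a $[+,-]$ summand is supported in the \emph{bottom} row, i.e.\ on the $\mathbb{B}$-side; it is the $[-,+]$ summands (the ones relevant in Proposition~\ref{prop ratinoal d-inv1}, where $pm-k^2>0$) whose homology lives on the $\mathbb{A}$-side, which is exactly what Lemma~\ref{lemmaxi0} records. Equivalently: when $pm-k^2<0$ the truncated mapping cone has one more $\mathbb{B}$-summand than $\mathbb{A}$-summand, so the surviving tower arises in the \emph{cokernel} of $\mathfrak{D}^+_{\mathfrak{s}_0}$, not in the kernel. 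Moreover, even taking your (incorrect) claim at face value, the arithmetic does not close up: if the tower really lived at $\mathfrak{A}^+_{\xi_0}$ then the same reasoning as in Proposition~\ref{prop ratinoal d-inv1} would give $d(Y',\mathfrak{t})=d(M,\mathfrak{t}_M)-2V_{\xi_0}$, with a minus sign, contradicting \eqref{dinv3}. So the mechanism you describe and the formula you write down are mutually inconsistent. To salvage a direct argument one would have to carefully analyze the cokernel of $\mathfrak{D}^+$ and track grading shifts on the $\mathbb{B}$-side, which is considerably more delicate than what you sketch; the orientation-reversal trick used in the paper avoids this entirely and is the intended route.
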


\begin{remark}
In our notation for the Seifert fiber space $M(0,0;(m-k,1),(p-k,1),(k,1))$, the two $0$'s means the base space for $M$ is of genus $0$ and without boundary, and $(m-k,1),(p-k,1)$, and $(k,1)$ specify the type of its exceptional fibers.
\end{remark}

Both propositions are deduced from the mapping cone formula. We will discuss the case $pm-k^2>0$ in detail, and the other case $pm-k^2<0$ can be obtained by reversing the orientation.

Fix a $\xi \in \underline{\rm Spin^c}(Y,K)=\mathbb{Z}$. Then the mapping cone $\mathbb{X}^+_{\mathfrak{s}}$ is given in Figure \ref{mappingcone}, where $\mathfrak{s}=G_{Y_{\gamma}(K), K_{\gamma}}(\xi)$. Since $$[\gamma]=m[\mu]+[\lambda]=(pm-k^2)[\theta],$$
the mapping cone $\mathbb{X}^+_{\mathfrak{s}}$ consists of $A^+_{\xi+j(pm-k^2) \cdot PD[\theta]}$ and $B^+_{\xi+j(pm-k^2) \cdot PD[\theta]}$ for $j \in \mathbb{Z}$.

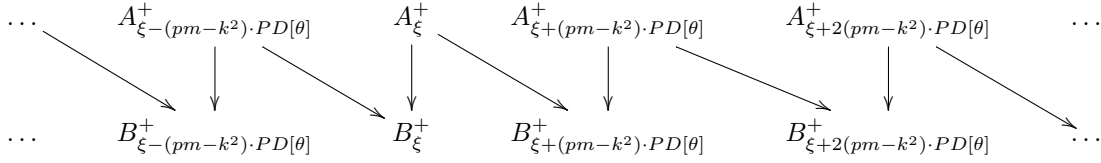
\begin{figure}[!h]
\begin{minipage}{1\linewidth}
\centering{\begin{displaymath}
\xymatrix{
\dots \ar[rd] & A^+_{\xi - (pm-k^2) \cdot PD[\theta]} \ar[d] \ar[rd] & A^+_{\xi} \ar[d] \ar[rd] & A^+_{\xi + (pm-k^2) \cdot PD[\theta]} \ar[d] \ar[rd] & A^+_{\xi + 2(pm-k^2) \cdot PD[\theta]} \ar[d] \ar[rd] & \dots\\
\dots  &  B^+_{\xi - (pm-k^2)\cdot PD[\theta]} & B^+_{\xi} &  B^+_{\xi + (pm-k^2)\cdot PD[\theta]} & B^+_{\xi + 2(pm-k^2)\cdot PD[\theta]} & \dots}
\end{displaymath}}
\end{minipage}
\caption{The mapping cone $\mathbb{X}^+_{\mathfrak{s}}$ with $\mathfrak{s}=G_{Y_{\gamma}(K), K_{\gamma}}(\xi)$.} \label{mappingcone}
\end{figure}

For a given $\xi$, there exists some positive integer $N$ such that $v^+_{\xi+i \cdot PD[\mu]}$ and $h^+_{\xi-i \cdot PD[\mu]}$ are quasi-isomorphisms when $i>N$. In the case $pm-k^2>0$, we see that $[\gamma]=(pm-k^2) \cdot [\theta]$ has the same sign as $[\mu]=p \cdot [\theta]$. Therefore the mapping cone $\mathbb{X}^+_{\mathfrak{s}}$ is quasi-isomorphic to the truncated mapping cone, denoted by $\mathbb{X}^{+,N}_{\mathfrak{s}}$, shown in Figure \ref{truncatedmappingcone}. 

\begin{figure}[!h]
\begin{minipage}{1\linewidth}
\centering{\begin{displaymath}
\xymatrix{
A^+_{\xi - N(pm-k^2) \cdot PD[\theta]} \ar[rd] & \dots \ar[d] \ar[rd] & A^+_{\xi} \ar[d] \ar[rd] & A^+_{\xi +(pm-k^2) \cdot PD[\theta]} \ar[d] \ar[rd] & \dots \ar[d] \ar[rd] & A^+_{\xi + N(pm-k^2) \cdot PD[\theta]} \ar[d] \\
& \dots & B^+_{\xi} & B^+_{\xi +(pm-k^2)\cdot PD[\theta]}& \dots  & B^+_{\xi + N(pm-k^2)\cdot PD[\theta]}}
\end{displaymath}}
\end{minipage}
\caption{The truncated mapping cone $\mathbb{X}^{+,N}_{\mathfrak{s}}$ with $\mathfrak{s}=G_{Y_{\gamma}(K), K_{\gamma}}(\xi)$ when $pm-k^2>0$.} \label{truncatedmappingcone}
\end{figure}
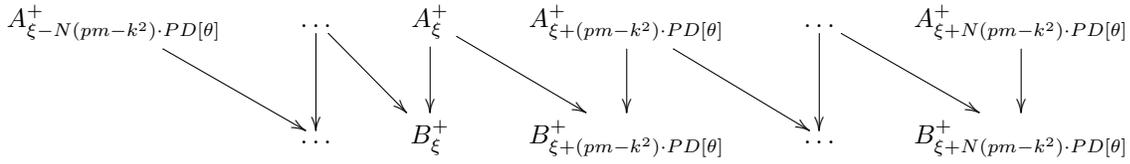

As $Y_{\gamma}(K)$ is an $L$-space obtained by a distance one surgery from an $L$-space, it follows from \cite[Lemma 6.7]{Boil} that
\[\widehat{\mathfrak{A}}_{\xi} \cong \mathbb{F}, \quad \mathfrak{A}^+_{\xi} \cong \mathcal{T}^+\]
for all $\xi \in \underline{\rm Spin^c}(Y,K)$. As $Y=L(p,1)$ itself is an $L$-space, we also have
\[\widehat{\mathfrak{B}}_{\xi} \cong \mathbb{F}, \quad \mathfrak{B}^+_{\xi} \cong \mathcal{T}^+ .\]
 This implies that $HF^+(Y_{\gamma}(K), \mathfrak{s})$ for any $\mathfrak{s} \in {\rm Spin^c}(Y_{\gamma}(K))$  is completely determined by the integers $V_{\xi}$ and $H_{\xi}$ for $\xi \in \underline{\rm Spin^c}(Y,K)$ with $G_{Y_{\gamma}(K),K_{\gamma}}(\xi)=\mathfrak{s}$.

Fix $n \gg 0$, and choose the parity of $n$ such that $|H_1(Y_{n\mu+\lambda})|=pn-k^2$ is odd. Then there exists only one self-conjugate $\rm Spin^c$ structure in ${\rm Spin^c}(Y_{n\mu+\lambda})$, denoted by $\mathfrak{t}_0$. For all sufficient large $n$, there is a map
\[\Xi: {\rm Spin^c}(Y_{\lambda+n\mu}(K)) \rightarrow \underline{\rm Spin^c}(Y,K). \]
Let $\xi_0=\Xi(\mathfrak{t}_0)$. The relative $\rm Spin^c$ structure $\xi_0$ has some key properties given in the following lemmas, which we can prove by the same arguments as in \cite{LMV}.

\begin{lemma}[Proposition 4.5 in \cite{LMV}]
\label{propsym}
Let $[l] \in H_1(Y-K)$. Then $V_{\xi_0+PD[l]}=H_{\xi_0-PD[l]}$.
\end{lemma}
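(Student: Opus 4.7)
The plan is to exploit the conjugation involution $J$ on $\underline{\rm Spin^c}(Y,K)$ and its interplay with the knot Floer complex. Two ingredients do all the work: a general identity $V_\xi = H_{J\xi}$ that follows from conjugation symmetry of $CFK^\infty$, and the specific fact that the distinguished lift $\xi_0 = \Xi(\mathfrak{t}_0)$ is $J$-invariant because $\mathfrak{t}_0$ is the \emph{unique} self-conjugate $\rm Spin^c$ structure on $Y_{\lambda+n\mu}(K)$.

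First I would recall the conjugation symmetry at the chain level. For any $\xi \in \underline{\rm Spin^c}(Y,K)$ the complex $CFK^\infty(Y,K,J\xi)$ is isomorphic to $CFK^\infty(Y,K,\xi)$ with the two filtrations $(i,j)$ swapped. Under this identification the quotient $A^+_{J\xi} \to B^+_{J\xi}$ (which is $v^+_{J\xi}$) is identified with $A^+_\xi \to B^+_{\xi + PD[\gamma]}$ (which is $h^+_\xi$), so on homology $\mathfrak{v}^+_{J\xi}$ is identified with $\mathfrak{h}^+_\xi$. Reading off the $U$-power annihilating the respective tower maps then yields the key identity
\[
V_\xi \;=\; H_{J\xi}, \qquad H_\xi \;=\; V_{J\xi},
\]
valid for every $\xi \in \underline{\rm Spin^c}(Y,K)$.

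Next I would pin down the action of $J$ on $\xi_0$ itself. Recall that $J$ is an affine involution on $\underline{\rm Spin^c}(Y,K)$ satisfying $J(\xi + \kappa) = J\xi - \kappa$ for $\kappa \in H^2(Y,K)$, so the set of $J$-fixed points is a torsor over the $2$-torsion of $H^2(Y,K)$. For $n$ sufficiently large the map $\Xi$ intertwines conjugation on ${\rm Spin^c}(Y_{\lambda+n\mu}(K))$ with $J$ on $\underline{\rm Spin^c}(Y,K)$. Since $pn - k^2$ is chosen to be odd, $Y_{\lambda+n\mu}(K)$ carries a unique self-conjugate $\rm Spin^c$ structure $\mathfrak{t}_0$; consequently its lift $\xi_0$ must be a fixed point of $J$. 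Given the $2$-torsion-free structure in play (this is where oddness is used), the only such fixed point with the correct image under $G_{Y,K}$ is $\xi_0$ itself, so $J\xi_0 = \xi_0$. This is exactly the argument of [LMV, Proposition~4.5] and transplants to our setting without change.

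Finally I would combine the two ingredients. For any $[l] \in H_1(Y-K) \cong H^2(Y,K)$,
\[
V_{\xi_0 + PD[l]} \;=\; H_{J(\xi_0 + PD[l])} \;=\; H_{J\xi_0 - PD[l]} \;=\; H_{\xi_0 - PD[l]},
\]
which is the claimed symmetry. The main obstacle is the second step: one must verify that the particular lift produced by $\Xi$ from the unique self-conjugate $\mathfrak{t}_0$ is genuinely $J$-fixed, rather than merely $J$-fixed modulo a shift in $H^2(Y,K)$. The primality of $p$ together with the parity constraint on $pn-k^2$ kills all such ambiguity, so the LMV proof carries through verbatim.
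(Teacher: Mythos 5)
Your proposal is correct and follows essentially the same route as the argument the paper implicitly points to: the paper gives no proof of its own, simply stating that Lemma~\ref{propsym} ``can be proved by the same arguments as in \cite{LMV},'' and the LMV argument is exactly the two-step pattern you describe — use the conjugation symmetry of $CFK^\infty(Y,K)$ to obtain $V_\xi = H_{J\xi}$, then establish $J\xi_0 = \xi_0$ by noting that $\xi_0 = \Xi(\mathfrak{t}_0)$ is the lift of the unique self-conjugate $\rm Spin^c$ structure on the large surgery (unique because $pn-k^2$ is odd, so $H^2(Y,K)\cong\mathbb{Z}$ has no $2$-torsion), after which the affine relation $J(\xi+\kappa)=J\xi-\kappa$ yields the claim. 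Your side remark about fixed points forming a torsor over $2$-torsion is not needed once $J\xi_0=\xi_0$ is established directly from the intertwining of $\Xi$ with conjugation, but it does no harm.
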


\begin{lemma} [Lemma 4.7 in \cite{LMV}]
\label{lemmaselfconj}
The $\rm Spin^c$ structure $\mathfrak{s}_0=G_{Y_{\gamma}(K),K_{\gamma}}(\xi_0)$ is a self-conjugate $\rm Spin^c$ structure on $Y_{\gamma}(K)$.
\end{lemma}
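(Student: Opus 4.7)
The plan is to adapt the proof of \cite[Lemma 4.7]{LMV} to our more general setting. The initial observation is that the parity of $n$ was chosen so that $|H_1(Y_{\lambda+n\mu}(K))|=pn-k^2$ is odd, which forces $\mathfrak{t}_0$ to be the \emph{unique} self-conjugate $\rm Spin^c$ structure on $Y_{\lambda+n\mu}(K)$. Since $p$ is also odd, there is likewise a unique self-conjugate $\rm Spin^c$ structure $\mathfrak{t}_Y$ on $Y=L(p,1)$, and both are determined purely by the parity of the relevant first homology.

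First, I would identify $\xi_0$ concretely through the large surgery cobordism $W_n\colon Y\to Y_{\lambda+n\mu}(K)$, obtained by attaching a $2$-handle along $K$ with framing $\lambda+n\mu$. The map $\Xi$ sends $\mathfrak{t}_0$ to the restriction $\xi_0\in\underline{\rm Spin^c}(Y,K)$ of the $\rm Spin^c$ structure $\mathfrak{r}_0$ on $W_n$ that simultaneously extends $\mathfrak{t}_Y$ on $Y$ and $\mathfrak{t}_0$ on $Y_{\lambda+n\mu}(K)$. Because restriction commutes with the conjugation involution $J$ on $\rm Spin^c$ structures and both boundary extensions are self-conjugate, $\mathfrak{r}_0$ is itself self-conjugate on $W_n$. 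This translates to a symmetry of the form $J\xi_0=\xi_0+c$ for an explicit element $c\in H^2(Y,K)$ lying in the kernel of $i^{\ast}\colon H^2(Y,K)\to H^2(Y)$; in particular $c$ is a multiple of $PD[\mu]$.

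Next, I would transport this symmetry through the distance-one surgery cobordism $W_\gamma\colon Y\to Y_\gamma(K)$, which differs from $W_n$ only in the framing of the attached $2$-handle. The same relative $\rm Spin^c$ structure $\xi_0$ on the common knot complement $Y-K$ extends uniquely to a $\rm Spin^c$ structure $\mathfrak{r}$ on $W_\gamma$ that still restricts to $\mathfrak{t}_Y$ on $Y$, and its restriction to $Y_\gamma(K)$ is by definition $\mathfrak{s}_0=G_{Y_\gamma(K),K_\gamma}(\xi_0)$. Using the identity $G_{Y,-K}(\xi)=G_{Y,K}(\xi)+PD[K]$ together with the explicit homological descriptions $[\mu]=p[\theta]$ and $[\lambda]=-k^2[\theta]$ in $H_1(Y-K)$ from Section~\ref{Homological analysis}, one checks that the shift $c$ is exactly the one needed for $\mathfrak{r}$ to be self-conjugate on $W_\gamma$. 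Restriction to the outgoing boundary then yields $J\mathfrak{s}_0=\mathfrak{s}_0$.

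The main obstacle is the bookkeeping in the final step: one must verify that the shift $c$ inherited from the large surgery compatibility precisely absorbs the change-of-framing discrepancy between $W_n$ and $W_\gamma$, so that no spurious PD-correction appears when restricting to $Y_\gamma(K)$. This is routine once one writes out the affine identification $\underline{\rm Spin^c}(Y,K)\cong\mathbb{Z}$ and traces the action of $J$ explicitly, but it is where the homological formulas for $[\mu]$ and $[\lambda]$ are used in an essential way and where the argument for $L(p,1)$ with $p\geq 5$ prime genuinely generalizes the $L(3,1)$ case treated in \cite{LMV}.
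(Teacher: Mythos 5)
Your skeleton — identify $\xi_0$ through the large surgery cobordism $W_n$, then transport to $W_\gamma$ and check conjugation — is a reasonable way to organize the argument, but there is a genuine gap at the central step. You assert that "because restriction commutes with $J$ and both boundary extensions are self-conjugate, $\mathfrak{r}_0$ is itself self-conjugate on $W_n$." This does not follow. For the $2$-handle cobordism $W_n$ one has $H^2(W_n)\cong\mathbb{Z}$ (and $H_1(W_n)=0$ since $k$ generates $\mathbb{Z}/p$), while $H^2(\partial W_n)$ is finite; so the restriction map ${\rm Spin^c}(W_n)\to{\rm Spin^c}(Y)\times{\rm Spin^c}(Y_{\lambda+n\mu}(K))$ has infinite fibers. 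Every $\rm Spin^c$ structure in the fiber over $(\mathfrak{t}_Y,\mathfrak{t}_0)$ has self-conjugate boundary restrictions, and $J$ acts on that fiber as an affine involution $\mathfrak{r}\mapsto -\mathfrak{r}+\text{const}$, which has at most one or two fixed points. Knowing only that the boundary data is $J$-fixed tells you $\mathfrak{r}_0$ and $J\mathfrak{r}_0$ lie in the same fiber, not that they coincide. To close this you need an additional normalization, namely that $\mathfrak{r}_0$ is the element of the fiber with $\langle c_1(\mathfrak{r}_0),[\Sigma]\rangle=0$ for a generator $[\Sigma]$ of $H_2(W_n)\cong\mathbb{Z}$ (this is what singles out $i=0$ in the Ozsv\'ath--Szab\'o labeling), and only then does $c_1(J\mathfrak{r}_0)=-c_1(\mathfrak{r}_0)$ force $J\mathfrak{r}_0=\mathfrak{r}_0$.

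Separately, note that the paper does not actually prove this lemma; it cites \cite[Lemma 4.7]{LMV} and records that the same argument applies. The LMV-style proof does not pass through the $4$-dimensional cobordism at all: it combines the defining symmetry $V_{\xi_0+PD[l]}=H_{\xi_0-PD[l]}$ of Lemma \ref{propsym} with the conjugation symmetry $V_{\xi}=H_{J\xi}$ of the knot Floer complex to deduce $V_{\xi_0+PD[l]}=V_{J\xi_0+PD[l]}$ for all $l$, which pins down $J\xi_0$ in terms of $\xi_0$, and then applies the identity $J\,G_{Y_\gamma(K),K_\gamma}(\xi)=G_{Y_\gamma(K),K_\gamma}(J\xi)+PD[K_\gamma]$ together with $i^{\ast}PD[\mu]=PD[K_\gamma]$ to conclude $J\mathfrak{s}_0=\mathfrak{s}_0$. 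That route is purely $3$-dimensional and avoids the fiber-ambiguity issue above entirely, which is why I would recommend it over the cobordism bookkeeping; in your approach the "routine" final step is in fact where the unaddressed normalization must reappear.
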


\begin{lemma}[Lemma 4.8 in \cite{LMV}]
\label{lemmaxi0}
Let $\Pi^+_{\xi_0}: \mathbb{X}^+_{\mathfrak{s}_0} \rightarrow A^+_{\xi_0}$ be the natural quotient map, where $\mathfrak{s}_0=G_{Y_{\gamma}(K),K_{\gamma}}(\xi_0)$. Suppose that $Y_{\gamma}(K)$ is an $L$-space, then $\Pi^+_{\xi_0}$ is a quasi-isomorphism.
\end{lemma}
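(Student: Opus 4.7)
The plan is to pass to the hat reduction and exploit the reflective symmetry around $\xi_0$ provided by Lemma \ref{propsym}. First, $\Pi^+_{\xi_0}$ is a chain map, because the maps $v^+_{\xi_0}$ and $h^+_{\xi_0}$ out of $A^+_{\xi_0}$ land in $\mathbb{B}^+_{\mathfrak{s}_0} \subset \ker \Pi^+_{\xi_0}$, while the differential on every other summand is entirely internal to $\ker \Pi^+_{\xi_0}$. Since $Y_{\gamma}(K)$ is an L-space, $H_*(\mathbb{X}^+_{\mathfrak{s}_0}) \cong HF^+(Y_{\gamma}(K),\mathfrak{s}_0) \cong \mathcal{T}^+$, and since $Y = L(p,1)$ is an L-space, $\mathfrak{A}^+_{\xi_0} \cong \mathcal{T}^+$ by \cite[Lemma 6.7]{Boil}. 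Because any graded, $U$-equivariant self-map of $\mathcal{T}^+$ is either zero or an isomorphism, it is enough to show that $\Pi^+_{\xi_0\,*}$ is nonzero on homology.

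The simplest route is through the hat version. Each $\widehat{A}_\xi$ and $\widehat{B}_\xi$ is $\mathbb{F}$, so the mapping cone $\widehat{\mathbb{X}}_{\mathfrak{s}_0}$ is captured by the diagrammatic calculus of the paper and decomposes into components that are either intervals of type $[\pm,\pm]$ or isolated vertices $\widehat{A}_\xi$ (arising precisely when $V_\xi > 0$ and $H_\xi > 0$). Only $[-,+]$, $[+,-]$, and isolated-vertex components contribute $\mathbb{F}$ to homology. The L-space condition $\widehat{HF}(Y_{\gamma}(K),\mathfrak{s}_0) \cong \mathbb{F}$ then forces exactly one such component to be present, and the task is to show that this unique component carries its $\mathbb{F}$-generator at the position $\widehat{A}_{\xi_0}$.

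This is where Lemma \ref{propsym} enters: the identity $V_{\xi_0+PD[l]} = H_{\xi_0-PD[l]}$ says exactly that the labeling of the $\widehat{A}_\xi$'s by $+,-,\circ$ is invariant under the involution $\xi \mapsto 2\xi_0 - \xi$ combined with the swap $+ \leftrightarrow -$. Hence the decomposition into components is symmetric about $\xi_0$, and the unique non-acyclic component is self-symmetric. Setting $l = 0$ yields $V_{\xi_0} = H_{\xi_0}$; accordingly either the component is a $[-,+]$ interval centered at $\xi_0$ whose homology generator may be chosen at $\widehat{A}_{\xi_0}$, or it is the isolated vertex $\widehat{A}_{\xi_0}$ itself. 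In either case $\widehat{\Pi}_{\xi_0}$ is a quasi-isomorphism. A five-lemma applied to the long exact sequences induced by $0 \to \widehat{\mathbb{X}}_{\mathfrak{s}_0} \to \mathbb{X}^+_{\mathfrak{s}_0} \xrightarrow{U} \mathbb{X}^+_{\mathfrak{s}_0} \to 0$ and the analogous sequence for $A^+_{\xi_0}$, together with the rigidity of $\mathcal{T}^+$ noted above, then promotes this to the desired quasi-isomorphism $\Pi^+_{\xi_0}$. I expect the main obstacle to be the symmetry step, in particular isolating the unique non-acyclic component and confirming that in the isolated-vertex case the generator really sits at $\widehat{A}_{\xi_0}$, so that the hat projection is nonzero on homology.
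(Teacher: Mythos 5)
Your overall strategy---pass to the hat version, use the L-space hypothesis to pin down a single non-acyclic component, and invoke the symmetry $V_{\xi_0+PD[l]}=H_{\xi_0-PD[l]}$ to locate it at $\xi_0$, then lift back to the $+$-version---is the right one and is essentially the approach taken in \cite{LMV}. However, there is a real gap in the middle step, at precisely the place you flagged as the anticipated obstacle.

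The issue is the inference ``$V_{\xi_0}=H_{\xi_0}$, accordingly the component is a $[-,+]$ interval centered at $\xi_0$ or the isolated vertex $\widehat{A}_{\xi_0}$.'' Knowing $A_{\xi_0}$ is either a $\circ$ or both-maps-trivial does not by itself tell you the flavor of the component it lies in. A priori the self-symmetric component through $\xi_0$ could be the ``$B$-heavy'' configuration $B_{j_1}\!-\!A_{j_1}\!-\!\cdots\!-\!A_{j_2}\!-\!B_{j_2+1}$ with $A_{j_1}$ of type $+$ and $A_{j_2}$ of type $-$ (and $j_1=-j_2$); this is fixed by the involution, since the involution carries $B_k$ to $B_{-k+1}$ and so permutes $\{B_{j_1},B_{j_2+1}\}$, and its homology sits in the bottom row, in which case $\widehat{\Pi}_{\xi_0}$ would vanish on homology. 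The observation that no single $B$ is fixed by $k\mapsto -k+1$ only excludes an isolated $B$, not this longer configuration. To close the gap you need one more ingredient: either (i) the non-increasing property of $\xi\mapsto V_\xi$ (equivalently non-decreasing $H_\xi$) in single $PD[\theta]$-steps, which together with $V_{\xi_0}=H_{\xi_0}$ forces the label pattern to be $\cdots-\,-\,\circ\cdots\circ\,+\,+\cdots$ symmetric about $\xi_0$ with $\xi_0$ in the $\circ$-block or the isolated block; or (ii) the count $\#A-\#B=1$ in the truncated mapping cone $\mathbb{X}^{+,N}_{\mathfrak{s}_0}$ (there are $2N+1$ copies of $A$ and $2N$ copies of $B$), which since every acyclic summand has $\#A=\#B$ forces the one non-acyclic summand to have $\#A-\#B=+1$, i.e.\ to be $A$-heavy. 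Either route eliminates the $[+,-]$/$B$-heavy possibility; without one of them the case analysis is incomplete.

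Two smaller points. First, the claim that a graded $U$-equivariant map of towers is zero or an isomorphism should be stated as: a degree-$0$ $U$-equivariant map $\mathcal{T}^+_{(d_1)}\to\mathcal{T}^+_{(d_2)}$ is forced to vanish unless $d_1=d_2$, in which case it is $0$ or the identity; this is fine but worth saying precisely since the two towers do not obviously have the same bottom grading in advance. Second, the five-lemma step at the end is phrased loosely: the cleanest way to lift $\widehat{\Pi}_{\xi_0}$ to $\Pi^+_{\xi_0}$ is to observe that for an L-space the natural map $\widehat{HF}\to HF^+$ is injective at the bottom grading, so nonvanishing of $\widehat{\Pi}_{\xi_0}$ on homology forces nonvanishing of $\Pi^+_{\xi_0}$, and then the tower rigidity you already invoked upgrades nonvanishing to isomorphism.
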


\begin{lemma}[Lemma 4.9 in \cite{LMV}]
\label{lemmaxi0+PDu}
Suppose that $Y_{\gamma}(K)$ is an $L$-space, and $V_{\xi_0} \geq 2$. Then the natural quotient map $\Pi^+_{\xi_0+PD[\mu]}: \mathbb{X}^+_{\mathfrak{s}_0 + i^{\ast} PD[\mu]} \rightarrow A^+_{\xi_0+PD[\mu]}$ is a quasi-isomorphism, where $i^{\ast}$ is induced by the inclusion $i: Y-K \rightarrow Y_{\gamma}(K)$.
\end{lemma}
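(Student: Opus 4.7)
The plan is to adapt the strategy used to establish Lemma \ref{lemmaxi0} (the corresponding statement for $\xi_0$ itself) to the shifted relative $\rm Spin^c$ structure $\xi_0+PD[\mu]$. First I would fit the quotient map into the short exact sequence
\[0 \longrightarrow K \longrightarrow \mathbb{X}^+_{\mathfrak{s}_0+i^{\ast}PD[\mu]} \longrightarrow A^+_{\xi_0+PD[\mu]} \longrightarrow 0,\]
where $K$ is the subcomplex spanned by all of $\mathbb{B}^+_{\mathfrak{s}_0+i^{\ast}PD[\mu]}$ together with every summand $A^+_{\xi_0+PD[\mu]+j(pm-k^2)PD[\theta]}$ with $j\neq 0$, equipped with the differential inherited from the mapping cone. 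It then suffices to show $K$ is acyclic.

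Next I would exploit the $L$-space hypotheses on $Y$ and $Y_{\gamma}(K)$: by \cite[Lemma 6.7]{Boil} each $\mathfrak{A}^+_\xi$ and $\mathfrak{B}^+_\xi$ is a single tower $\mathcal{T}^+$, so the induced maps $\mathfrak{v}^+_\xi$ and $\mathfrak{h}^+_\xi$ are multiplications by $U^{V_\xi}$ and $U^{H_\xi}$. Since $HF^+(Y_{\gamma}(K),\mathfrak{s}_0+i^{\ast}PD[\mu])\cong \mathcal{T}^+$, a tower-counting argument forces the off-center summands to pair up under $\mathfrak{D}^+$. Applying Lemma \ref{propsym} with $l=PD[\mu]+j(pm-k^2)PD[\theta]$, together with the monotonicity (\ref{eq11}), one checks that for every $j\neq 0$ exactly one of $V_{\xi_0+PD[\mu]+j(pm-k^2)PD[\theta]}$ and $H_{\xi_0+PD[\mu]+j(pm-k^2)PD[\theta]}$ vanishes while the other is positive, which produces an explicit contracting homotopy of $K$ summand by summand.

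The hypothesis $V_{\xi_0}\geq 2$ is precisely what is needed to run this argument after the shift by $PD[\mu]$. No such hypothesis appears in Lemma \ref{lemmaxi0} because the symmetry $V_{\xi_0+PD[l]}=H_{\xi_0-PD[l]}$ is centered at $\xi_0$; here the symmetry is displaced by $PD[\mu]$, and each such shift can drop $V$ by at most one under (\ref{eq11}). Thus $V_{\xi_0}\geq 2$ guarantees $V_{\xi_0+PD[\mu]}\geq 1$ and, via Lemma \ref{propsym}, $H_{\xi_0+PD[\mu]}=V_{\xi_0-PD[\mu]}\geq 2$, so the central summand $A^+_{\xi_0+PD[\mu]}$ is robustly the only one that fails to cancel.

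The main obstacle will be the combinatorial bookkeeping: for each $j\neq 0$ one must identify which of $V$ and $H$ vanishes at $\xi_0+PD[\mu]+j(pm-k^2)PD[\theta]$ using the symmetry at $\xi_0$ and the monotonicity, and then match the resulting nontrivial arrows with the correct $B^+$ summands in $\mathbb{B}^+_{\mathfrak{s}_0+i^{\ast}PD[\mu]}$ so as to realize $K$ as a direct sum of acyclic two-term pieces. Once this pairing is verified, the acyclicity of $K$ — and hence the quasi-isomorphism claim for $\Pi^+_{\xi_0+PD[\mu]}$ — follows formally, mirroring the argument of \cite[Lemma 4.9]{LMV}.
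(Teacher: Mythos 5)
The paper does not actually reprove this lemma --- it cites \cite[Lemma 4.9]{LMV} and earlier states that this group of lemmas ``can be proved by the same arguments as in \cite{LMV}'' --- so there is no in-paper proof to compare against. That said, your outline follows the expected route and correctly identifies the role of $V_{\xi_0}\geq 2$: it guarantees $V_{\xi_0+PD[\mu]}\geq 1$ by (\ref{eq11}) and $H_{\xi_0+PD[\mu]}=V_{\xi_0-PD[\mu]}\geq 2$ by Lemma \ref{propsym}, so the central summand $A^+_{\xi_0+PD[\mu]}$ is ``isolated'' (both $\widehat{\mathfrak{v}}$ and $\widehat{\mathfrak{h}}$ vanish in the hat version).

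There is, however, a misattribution that hides a genuine logical step. You assert that ``applying Lemma \ref{propsym}\,\ldots\,together with the monotonicity (\ref{eq11}), one checks that for every $j\neq 0$ exactly one of $V_{\xi_0+PD[\mu]+j(pm-k^2)PD[\theta]}$ and $H_{\xi_0+PD[\mu]+j(pm-k^2)PD[\theta]}$ vanishes.'' Symmetry and monotonicity cannot produce the vanishing: both of $\tilde V_j$ and $\tilde H_j$ at $j=\pm1$ may a priori be large (they only drop or rise by a bounded amount per step). What actually forces $\min(\tilde V_j,\tilde H_j)=0$ for every $j\neq 0$ is the $L$-space hypothesis on $Y_\gamma(K)$ \emph{together with} the isolation of the central summand: in the hat mapping cone the isolated $\widehat A_{\xi_0+PD[\mu]}$ already contributes a copy of $\mathbb{F}$ to $\widehat{HF}(Y_\gamma(K),\mathfrak{s}_0+i^*PD[\mu])\cong\mathbb{F}$, so no other $\widehat A_\xi$ in the cone may have both $\widehat{\mathfrak{v}}_\xi$ and $\widehat{\mathfrak{h}}_\xi$ trivial. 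Once this is established, monotonicity (which gives $\tilde H_j>0$ for $j>0$ and $\tilde V_j>0$ for $j<0$) merely identifies \emph{which} of the two vanishes, and then your two-term cancellation does make $K$ acyclic summand by summand. Your ``tower-counting'' sentence gestures at the $L$-space input but does not carry this weight as written; the proof should state explicitly that the $L$-space condition, not the symmetry, rules out any second isolated summand.
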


\begin{proof}[Proof of Proposition \ref{prop ratinoal d-inv1}]
First, we use the truncated mapping cone $\mathbb{X}^+_{\mathfrak{s}_0}$ to show
\[d(Y',\mathfrak{s}_0)=d(M,\mathfrak{s}_0)-2V_{\xi_0}.\]
By Lemma \ref{lemmaselfconj}, $\mathfrak{s}_0$ is self-conjugate, and it is the unique self-conjugate $\rm Spin^c$ structure on $Y'$ and $M$ since $|H_1(Y')|=|H_1(M)|$ is assumed to be odd.

According to Lemma \ref{lemmaxi0}, the nonzero element of minimal grading in $HF^+(Y',\mathfrak{s}_0)$ is supported in $\mathfrak{A}^+_{\xi_0}$, so the minimal grading in $\mathfrak{A}^+_{\xi_0}$ is the $d$-invariant after an appropriate grading shift. Let $\sigma(\xi_0)$ denote this grading shift. Then we have
\begin{equation}
\label{eq12}
d(Y',\mathfrak{s}_0)=d(Y,G_{Y,K}(\xi_0))-2V_{\xi_0}+\sigma(\xi_0).
\end{equation}
Recall that grading shifts only depend on the homology class of the knot. So we use the simple knot in the same homology class as $K$, i.e., $K(p,1,k)$, to compute the grading shift. We can see that $\gamma$-surgery along $K(p,1,k)$ gives the Seifert fiber space $M(0,0;(m-k,1),(p-k,1),(k,1))$. This computation is standard (cf. \cite[Lemma 9]{Darcysumner}).

We claim that if $p=5, 7$ or $m \geq \frac{(p+k) \cdot k}{2p}$, then for the mapping cone $\mathbb{X}^+_{\mathfrak{s}_0}$ of $K(p,1,k)$, the nonzero element of minimal grading in $HF^+(M,\mathfrak{s}_0)$ is supported in $\mathfrak{A}^+_{\xi_0}$. We remark that one cannot directly apply Lemma \ref{lemmaxi0} here because the surgered manifold $M(0,0;(m-k,1),(p-k,1),(k,1))$ is not necessarily an $L$-space.  By Lemma \ref{propsym}, $\xi_0$ has the symmetric property $V_{\xi_0+PD[l]}=H_{\xi_0-PD[l]}$ for any $[l] \in H_1(Y-K)$. We see from (\ref{eq28}) and (\ref{eq29}) that the relative $\rm Spin^c$ structure with Alexander grading $0$ is the unique relative $\rm Spin^c$ structure which has this symmetric property, hence the relative $\rm Spin^c$ structure $\xi_{0}$ corresponds to $0$. Now we consider the hat version of the mapping cone.

When $m$ is large enough such that $pm-k^2 \geq \frac{k}{2} \cdot (p-k)$, where the right hand side is the largest Alexander grading of generators in the knot Floer complex of $K(p,1,k)$, the mapping cone is \emph{well-ordered}, i.e., there is one summand of type $[-,+]$ in the middle and all the elements of the top row to left of the $[-,+]$ summand are marked $-$, while all the elements to the right of the $[-,+]$ summand are marked $+$, as shown in Figure \ref{largemappingcone}. Hence the homology is isomorphic to $\mathbb{F}$, and $\widehat{\Pi}_{\xi_0}: \widehat{\mathbb{X}}_{\mathfrak{s}_0} \rightarrow \widehat{A}_{\xi_0}$ is a quasi-isomorphism.  It follows that $\Pi^+_{\xi_0}: \mathbb{X}^+_{\mathfrak{s}_0} \rightarrow A_{\xi_0}^+$ is also a quasi-isomorphism, so the nonzero element of minimal grading in $HF^+(M,\mathfrak{s}_0)$ is supported in $\mathfrak{A}^+_{\xi_0}$. 

\begin{figure}[!h]
\begin{minipage}[b]{1\textwidth}
\centering
\begin{displaymath}
\xymatrixrowsep{5mm}
\xymatrixcolsep{5mm}
\xymatrix{
\dots & - \ar[rd] &  - \ar[rd] & - \ar[rd] & \circ \ar[d] \ar[rd] & + \ar[d] & + \ar[d] & + \ar[d] &  \dots\\
\dots & \bullet & \bullet & \bullet & \bullet & \bullet & \bullet  &  \bullet  & \dots}
\end{displaymath}
\end{minipage}
\caption{The mapping cone of $K(p,1,k)$ when $m>\frac{(p+k) \cdot k}{2p}$. All the elements to the left of the summand $[-,+]$ are $-$, and all the elements to the right are $+$.
} \label{largemappingcone}
\end{figure}
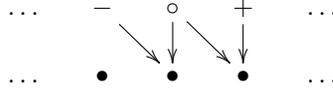

When $p=5$, $k=1$ and $m >0$, we can check that the mapping cone is also well-ordered. Hence, the minimal grading is also supported in $\mathfrak{A}_{\xi_0}^+$. When $p=5$ and $k=2$, the mapping cone for $m=1$ is shown in Figure \ref{mapping cone of K(5,1,2) 1-surgery} whereas the mapping cone for $m=2$ is well-ordered. In either case, we see that the minimal grading is supported in $\mathfrak{A}_{\xi_0}^+$.  Hence the claim is true when $p=5$. Finally, the case $p=7$ can be proved in a similar way, which we omit here.  


Once we understand where the nonzero element of minimal grading is supported, we can compute the $d$-invariant by the formula
\begin{equation}
\label{eq13}
d(M,\mathfrak{s}_0)=d(Y,G_{Y,K(p,1,k)}(\xi_0))+\sigma(\xi_0),
\end{equation}
where we use the fact that $V_{\xi_0}$ for the simple knot $K(p,1,k)$ equals 0.  Comparing (\ref{eq12}) and (\ref{eq13}), we obtain the desired equality
\[d(Y',\mathfrak{s}_0)=d(M,\mathfrak{s}_0)-2V_{\xi_0}.\]

The second equality (\ref{dinv2}) is proved by the same strategy. We use Lemma \ref{lemmaxi0+PDu} instead of Lemma \ref{lemmaxi0},  and under the assumption $|H_1(Y')|=pm-k^2\geq 5$ we can show that $\Pi^+_{\xi_0+PD[\mu]}$ for simple knot is a quasi-isomorphism when $p=5,7$ or $m \geq \frac{(p+k) \cdot k}{2p}+1$. Note that the inequality $m \geq \frac{(p+k) \cdot k}{2p}+1$ implies $pm-k^2 \geq \frac{k}{2}\cdot (p-k)+p$, which guarantees that the mapping cone of the simple knot is well-ordered. This completes the proof.  


\end{proof}

\begin{proof}[Proof of Proposition \ref{prop ratinoal d-inv2}]
The proof is similar to the case $pm-k^2>0$. 
We consider $-Y'$ as an $L$-space obtained from $(-m \mu + \lambda)$-surgery along a knot in $L(-p,1)$ and then apply the same argument. In particular, the inequality $m \leq \frac{(3k-p) \cdot k}{2p} -1 $ implies $pm-k^2 \leq -\frac{k}{2}\cdot(p-k)-p$, which guarantees that the mapping cone of the simple knot is well-ordered.
\end{proof}

\section{Surgeries along homologically essential knots}
\label{Surgeries along homologically essential knots}
\subsection{Distance one surgeries on $L(p,1)$ with $p \geq 5$ prime and $k=1$}
\label{L(p,1)}

We divide distance one surgeries along homologically essential knot in $L(p,1)$ into two cases: $k=1$ and $k>1$.
The case $k=1$ is simpler as the Seifert fiber space $M$ appearing in our $d$-invariant surgery formula is a lens space, of which the $d$-invariant is easier to compute.

More precisely, the Seifert fiber space $M=M(0,0; (m-k,1), (p-k,1), (k,1))$ in Proposition \ref{prop ratinoal d-inv1} and \ref{prop ratinoal d-inv2} reduces to the lens space $L(pm-1,p)$ when $k=1$. Note that the self-conjugate $\rm Spin^c$ structure $\mathfrak{t}_M$ in our $d$-invariant surgery formula corresponds to $\frac{p-1}{2}$, as it is the unique self-conjugate $\rm Spin^c$ structure on $L(pm-1,p)$. In addition, the $\rm Spin^c$ structure $\mathfrak{t}_M+i^{\ast}PD[\mu]$ in (\ref{dinv2}) and (\ref{dinv4}) corresponds to $\frac{3p-1}{2}$ up to $\rm Spin^c$-conjugation, because the difference of the two $\rm Spin^c$ structures $\frac{p-1}{2}$ and $\frac{3p-1}{2}$ is $\pm i^{\ast}PD[\mu]$.

For the convenience of the reader, we compute the $d$-invariant of the relevant lens spaces using the recursive formula (\ref{eq1}) and list the results below.

For $m \geq 2$,
\begin{align}
d(L(pm-1,1),0)&=\frac{pm-2}{4} \label{deq1}\\
d(L(pm-1,1),j)&=-\frac{1}{4}+\frac{(2j-(pm-1))^2}{4(pm-1)} \label{deq2}\\
d(L(pm-1,p),\frac{p-1}{2})&=\frac{m-2}{4} \label{deq3}\\
d(L(pm-1,p),\frac{3p-1}{2})&=\frac{pm^2-(6p+1)m+4p+6}{4(pm-1)}. \label{deq4}
\end{align}

For $m \leq -2$,
\begin{align}
d(L(-pm+1,1),0)&=\frac{-pm}{4} \label{deq5}\\
d(L(-pm+1,1),j)&=-\frac{1}{4}+\frac{(2j-(-pm+1))^2}{4(-pm+1)} \label{deq6}\\
d(L(-pm+1,p),\frac{p-1}{2})&=\frac{-m}{4} \label{deq7}\\
d(L(-pm+1,p),\frac{3p-1}{2})&=\frac{pm^2+(4p-1)m+4p-4}{4(-pm+1)}. \label{deq8}
\end{align}

\bigskip

\begin{proof}[Proof of Theorem \ref{theoremgen} (\romannumeral2)]
	Suppose the lens space $L(n,1)$ with $n$ odd is obtained by $(m\mu+\lambda)$-surgery along $K$. Since $|n|=pm-1$, $m$ is an even integer.  The proof is based on the computation of $d$-invariant, which we divide into 3 cases:
	
	\medskip
	\noindent
	Case \romannumeral1: $m>0$. \\
	By (\ref{firsthomology}), we have $|n|=pm-1$.
	
	If $n=pm-1$, Formula (\ref{dinv1}) gives
	\[d(L(pm-1,1),0)=d(L(pm-1,p),\frac{p-1}{2})-2V_{\xi_0},\]
	where we use the $\rm Spin^c$ structure corresponding to $0$ on $L(pm-1,1)$ as it is the unique self-conjugate $\rm Spin^c$ structure. Thus
	\[2V_{\xi_0}=\frac{m(1-p)}{4} <0,\]
	which contradicts the fact that $V_{\xi_0}$ is non-negative.
	
	If $n=-pm+1$, Formula (\ref{dinv1}) gives
	\[-d(L(pm-1,1),0)=d(L(pm-1,p),\frac{p-1}{2})-2V_{\xi_0},\]
	which implies
	\[V_{\xi_0}=\frac{m(p+1)-4}{8}.\]
	When $p=5$ and $m=2$, we have $V_{\xi_0}=1 < 2$, so Formula (\ref{dinv2}) is not applicable here. In this case, we cannot obstruct distance one surgery from $L(5,1)$ to $L(-9,1)$ by our $d$-invariant surgery formula. So that gives one of the possible solutions in the statement of Theorem \ref{theoremgen} (\romannumeral2).
	
	Otherwise, we have $p>5$ or $m \geq 4$, so the non-negative integer $V_{\xi_0}=\frac{m(p+1)-4}{8} \geq 2$. Applying Formula (\ref{dinv2}), we have
	\begin{equation}
	\label{eq15}
	-d(L(pm-1,1),0+i^{\ast}PD[\mu])=d(L(pm-1,p),\frac{3p-1}{2})-2V_{\xi_0+PD[\mu]}.
	\end{equation}
	Equation (\ref{deq2}) implies
	$-d(L(pm-1,1),0+i^{\ast}PD[\mu])= \frac{1}{4}-\frac{(2j-(pm-1))^2}{4(pm-1)}$
	for some $j \in [0,pm-2]$.  Plugging it into (\ref{eq15}), we have
	\begin{equation}
	\label{eq16}
	\frac{1}{4}-\frac{(2j-(pm-1))^2}{4(pm-1)}=\frac{pm^2-(6p+1)m+4p+6}{4(mp-1)}-2V_{\xi_0+PD[\mu]}
	\end{equation}
	Since $V_{\xi_0}-1 \leq$ $V_{\xi_0+PD[\mu]} \leq V_{\xi_0}$, there are two cases:

	Case \romannumeral1 (a): $V_{\xi_0+PD[\mu]}=V_{\xi_0}=\frac{m(p+1)-4}{8}$.
	Equation (\ref{eq16}) can be simplified to
	\[j^2-(pm-1)j+p+1-mp=0\]
	We claim that the function $f(x)=x^2-(pm-1)x+p+1-pm$ has no root in $[0,pm-2]$. Indeed, $f(x)<0$ for all $x \in [0,pm-2]$ since its axis of symmetry is $x=\frac{pm-1}{2}$ and $f(0)=p+1-mp<0$. So this case is impossible.
	
	Case \romannumeral1 (b): $V_{\xi_0+PD[\mu]}=V_{\xi_0}-1=\frac{m(p+1)-4}{8}-1$. Equation (\ref{eq16}) can be simplified to
	\[j^2-(pm-1)j+p+pm-1=0.\]
	Let $f(x)=x^2-(pm-1)x+p+pm-1$. The axis of symmetry of $f(x)$ is $x=\frac{pm-1}{2}$, and we can see that $f(0)=p+pm-1>0$, $f(1)=1+p>0$ and $f(2)=p+5-pm<0$. 
	Therefore, the roots of $f(x)$ lie in $(1,2)$ and $(pm-3,pm-2)$, which are not integers. This gives a contradiction.

	%
	
	\medskip\noindent
	Case \romannumeral2: $m<0$.\\ 
	By (\ref{firsthomology}), we have $|n|=-pm+1$. If $n=pm-1$, Formula (\ref{dinv3}) gives
	\[-d(L(-pm+1,1),0)+d(L(-pm+1,p),\frac{p-1}{2})=2V_{\xi_0},\]
	where we use the $\rm Spin^c$ structure corresponding to $0$ on $L(pm-1,1)$ as it is the unique self-conjugate $\rm Spin^c$ structure on it. Thus
	\[2V_{\xi_0}=\frac{m(p-1)}{4} <0,\]
	which contradicts the fact that $V_{\xi_0}$ is non-negative.
	If $n=-pm+1$, Formula (\ref{dinv3}) gives
	\[d(L(-pm+1,1),0)+d(L(-pm+1,p),\frac{p-1}{2})=2V_{\xi_0}.\]
	which implies the integer
	\[V_{\xi_0}=\frac{-m(p+1)}{8} \geq 2,\]
	since $m \leq -2$ and $p \geq 5$. Hence, we can apply Formula (\ref{dinv4}) and get
	\begin{equation}
	\label{eq17}
	d(L(-pm+1,1),0+i^{\ast}PD[\mu])+d(L(-pm+1,p),\frac{3p-1}{2})=2V_{\xi_0+PD[\mu]}.
	\end{equation}
	By Equation (\ref{deq6}),
	$d(L(-pm+1,1),0+i^{\ast}PD[\mu])=-\frac{1}{4}+\frac{(2j-(-pm+1))^2}{4(-pm+1)}$
	for some $j \in [0,-pm]$.  Plugging it into (\ref{eq17}), we have
	\begin{equation}
	\label{eq18}
	-\frac{1}{4}+\frac{(2j-(-pm+1))^2}{4(-pm+1)} +\frac{pm^2+(4p-1)m+4p-4}{4(-mp+1)}=2V_{\xi_0+PD[\mu]}
	\end{equation}
	Since $V_{\xi_0}-1 \leq V_{\xi_0+PD[\mu]} \leq V_{\xi_0}$, there are two cases:
	
	Case \romannumeral2 (a): $V_{\xi_0+PD[\mu]}=V_{\xi_0}=\frac{-m(p+1)}{8}$. Equation (\ref{eq18}) can be simplified to
	\[j^2-(-pm+1)j+p+pm-1=0\]
	We claim that the function $f(x)=x^2-(-pm+1)x+p+pm-1$ has no root in $[0,-pm]$. Indeed, $f(x)<0$ for any $x \in [0,-pm]$ since its axis of symmetry is $x=\frac{-mp+1}{2}$ and $f(0)=p+pm-1<0$. So this case is impossible.
	
	Case \romannumeral2 (b): $V_{\xi_0+PD[\mu]}=V_{\xi_0}-1=\frac{-m(p+1)}{8}-1$. Equation (\ref{eq18}) can be simplified to
	\[j^2-(-pm+1)j+p-pm+1=0\]
	Let $f(x)=x^2-(-pm+1)x+p-pm+1$. The axis of symmetry of $f(x)$ is $x=\frac{-pm+1}{2}$, and $f(0)=p-pm+1>0$, $f(1)=p+1>0$ and $f(2)=p+3+pm<0$. So the roots of $f(x)$ lie in the intervals $(1,2)$ and $(-pm-1,-pm)$, which are not integers. This gives a contradiction.
	%
	
	\medskip
	\noindent
	Case \romannumeral3: $m=0$. By (\ref{firsthomology}), we have $|n|=1$. In this case, we indeed have a distance one surgery on $L(p,1)$ yielding $S^3$ given as the double branched cover of the band surgery in Figure \ref{bandsurgery4}.

\end{proof}

\subsection{$d$-invariants of Seifert fiber spaces}

To use the $d$-invariant surgery formula for homologically essential knots in $L(p,1)$, we must compute the $d$-invariant of the Seifert fiber space $M(0,0;(m-k,1),(p-k,1),(k,1))$. In this subsection, we briefly review the algorithm due to Ozsv\'{a}th and Szab\'{o}, which computes the $d$-invariant of a larger class of 3-manifolds, namely, the plumbed 3-manifolds \cite{OSd}.

Let $N$ be a Seifert fiber space. Then $N$ can be regarded as the boundary of a plumbed 4-manifold $X$, which is constructed by plumbing disc bundles over $S^2$ according to a diagram $G$. For example, in our case $N=M(0,0;(m-k,1),(p-k,1),(k,1))$,  and the plumbing diagram $G$ is shown in Figure \ref{Mplumbinggraph}.

\begin{figure}[!h]
\centering
\subfigure[When $m \geq k+3$.]{\label{M1}
\includegraphics[width=0.4\textwidth]{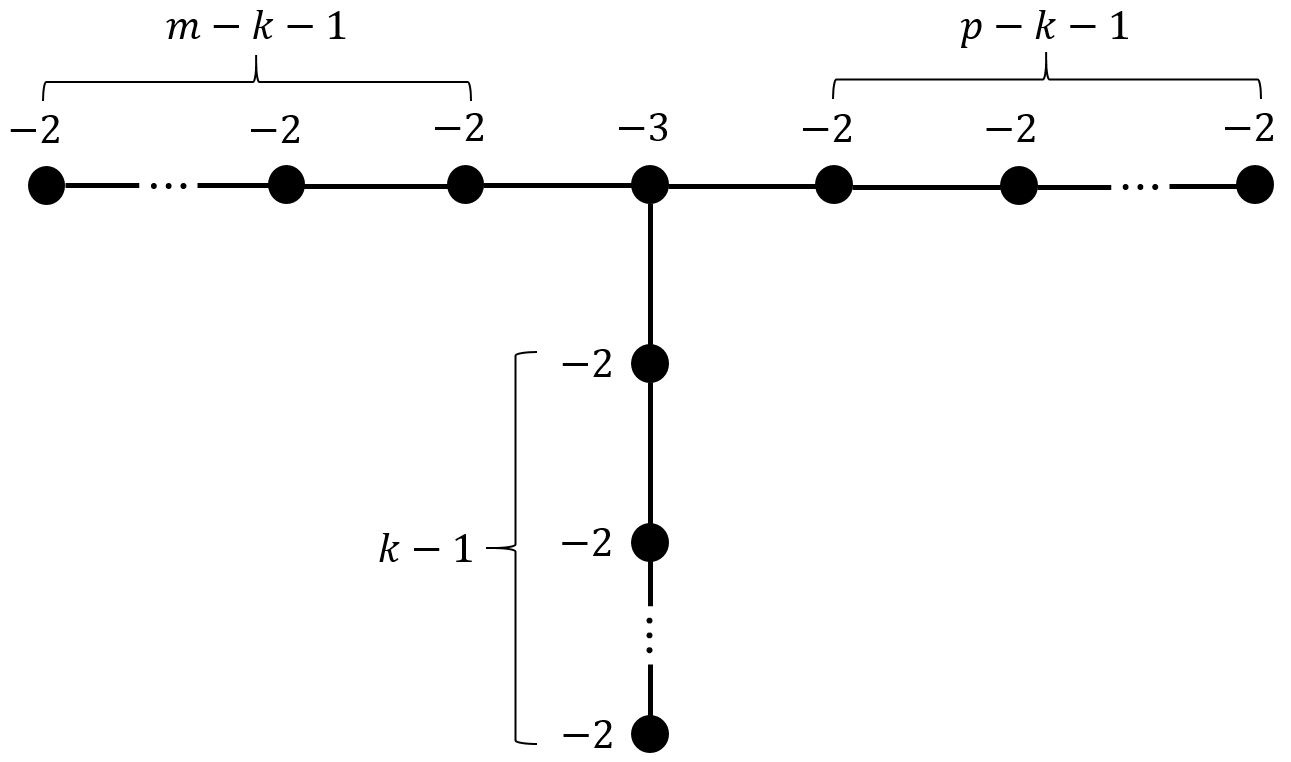}}\qquad
\subfigure[When $m \leq k-3$.]{\label{M2}
\includegraphics[width=0.4\textwidth]{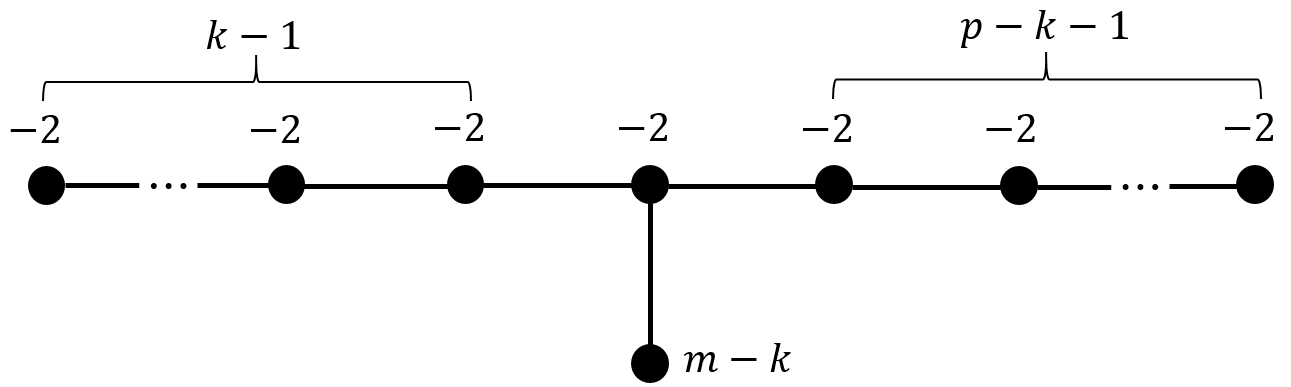}}
\caption{The plumping diagram of $M(0,0;(m-k,1),(p-k,1),(k,1))$ when $k>1$.}\label{Mplumbinggraph}
\end{figure}

Suppose $N$ is a rational homology sphere.  We have the following exact sequence.  

\begin{displaymath}
\xymatrix{
   & & {\rm Spin^c}(X) \ar[d]^{c_1} \ar[r] & {\rm Spin^c}(N) \ar[d]^{c_1} & \\
0 \ar[r] & H_2(X) \ar[r] \ar@{=}[d] & H^2(X) \ar[r] \ar@{=}[d] & H^2(N) \ar[r] \ar@{=}[d] & 0\\
0 \ar[r] & \mathbb{Z}^{b_2(X)} \ar[r]^Q & \mathbb{Z}^{b_2(X)} \ar[r] & {\rm coker}Q \ar[r] & 0}
\end{displaymath}

Let $V(G)$ denote the set of vertices of $G$. For each $v \in V(G)$ let $S_v$ be the sphere corresponding to $v$. The homology $H_2(X)$ is free and generated by the homology class of spheres $S_v$. Since $X$ is simply-connected, the cohomology $H^2(X)=\mathrm{Hom}(H_2(X),\mathbb{Z})$. As $H^2(X)$ is free, it can be represented over the basis $[S_v]^{\ast}$, the Hom-dual of $[S_v]$. Over the basis $[S_v]$ and $[S_v]^{\ast}$, the map $H_2(X) \rightarrow H^2(X)$ is represented by the matrix of the intersection form $Q: H_2(X) \times H_2(X) \rightarrow \mathbb{Z}$.

The set of characteristic vectors for $G$, denoted by Char(G), consists of those $w \in H^2(X)$ satisfying
\[\langle w , [S_v] \rangle \equiv \langle [S_v], [S_v] \rangle \,\, ({\rm mod} \,\, 2) \]
for all $v \in V(G)$. The set of $\rm Spin^c$ structures on $X$ is in one-to-one correspondence with the characteristic vectors for $G$ by the first Chern class $c_1$. We use $\mathfrak{t}(w)$ to represent the $\rm Spin^c$ structure on $N$ that is determined by the equivalence class $w \in $ Char(G) in $\mathrm{coker}\,Q$. If $\mathfrak{s}, \mathfrak{s}' \in {\rm Spin^c}(X)$ restrict to the same ${\rm Spin^c}$ structure on $N$, then their corresponding characteristic vectors $w, w' \in$ Char(G) are congruent modulo the image of $2H_2(X)$ in $H^2(X)$; equivalently, $(w-w')Q^{-1} \in H_2(X)$.

Denote by $\omega(v)$ and $d(v)$ the \emph{weight} and \emph{degree} of a vertex $v$ in $G$, respectively. The vertex $v$ is called bad if $\omega(v) > -d(v)$.  Ozsv\'{a}th and Szab\'{o} \cite{OSd} show that if $G$ is negative definite and contains at most one bad vertex, then
\begin{equation}
\label{dSFS}
d(N,\mathfrak{t})=\frac{1}{4} \left( \mathop{\max}_{w:\mathfrak{t}(w)=\mathfrak{t}} \langle w,w \rangle + |G| \right).
\end{equation}
Moreover, they give an algorithm to find the characteristic covector $w$ that maximises (\ref{dSFS}), which we review below. 

Consider all $w \in$ Char(G) satisfying
\begin{equation}
\label{eq14}
\omega(v)+2 \leq \langle w, [S_v] \rangle \leq -\omega(v) \,\, {\rm for \,\, all \,\, } v \in V(G).
\end{equation}
Let $w_0=w$. We then construct $w_i$ inductively as follows: if there exists $v_j \in V(G)$ such that
\[ \langle w_i, [S_{v_j}] \rangle = -\omega(v_j), \]
then we let $w_{i+1}=w_{i}+2PD[S_{v_j}]$ and call this action a {\it pushing down} the value of $w_i$ on $v_j$. 
The path $\{ w_0, w_1, \dots \}$ will terminate at some $w_n$ when one of the followings happens:
\begin{itemize}
\item $\omega(v) \leq \langle w_n , [S_v] \rangle \leq -\omega(v)-2$ for all $v \in V(G)$. In this case, the path is called {\it maximising}.
\item $\langle w_n, [S_v] \rangle > -\omega(v)$ for some $v \in V(G)$. In this case, the path is called {\it non-maximising}.
\end{itemize}
Ozsv\'{a}th and Szab\'{o} proved that the maximiser of (\ref{dSFS}) is contained in the set of characteristic vectors which satisfy (\ref{eq14}) and initiate a maximising path.

\subsection{Distance one surgeries on $L(p,1)$ with $p \geq 5$ prime and $k>1$}

The goal of this section is to prove Theorem \ref{theoremgen}(\romannumeral3): We want to show that a distance one surgery never yields a lens space $L(n,1)$ when $m\geq k+3$.  We achieve this by showing that the $d$-invariant of the lens space $L(n,1)$ never equals the value obtained from our $d$-invariant surgery formula for homologically essential knots.

The first step is to compute the $d$-invariant of Seifert fiber space $M(0,0;(m-k,1),(p-k,1),(k,1))$. When $m\geq k+3$, the negative-definite plumbing diagram of $M$ on which we apply Ozsv\'ath-Szab\'o's formula (\ref{dSFS}) is given in Figure \ref{M1}. We compute the intersection form $Q$ associated to this plumbing diagram:



\[Q_M=\left[\begin{array}{c|c|c|c}
\overbrace{\begin{matrix}
 -2 & 1 &       &   \\
 1  & -2&       &   \\
    &   &\ddots & 1 \\
    &   &      1& -2
\end{matrix}}^{m-k-1}& & &
\begin{matrix}
\\
\\
\\
1
\end{matrix}\\
\hline
& \overbrace{\begin{matrix}
 -2 & 1 &       &   \\
 1  & -2&       &   \\
    &   &\ddots & 1 \\
    &   &      1& -2
\end{matrix}}^{p-k-1}& &
\begin{matrix}
\\
\\
\\
1
\end{matrix}\\
\hline
& &
\overbrace{\begin{matrix}
 -2 & 1 &       &   \\
 1  & -2&       &   \\
    &   &\ddots & 1 \\
    &   &      1& -2
\end{matrix}}^{k-1}&
\begin{matrix}
\\
\\
\\
1
\end{matrix}\\
\hline
\hspace{2.5cm} 1 & \hspace{2.5cm} 1 & \hspace{2.5cm} 1 & -3
\end{array}\right]\]

\medskip
The next lemma gives the maximiser of Formula (\ref{dSFS}) for each $\rm Spin^c$ structure on $M$.

\begin{lemma}
\label{maximiser}
The maximisers for the $pm-k^2$ number of $\rm Spin^c$ structures on $M=M(0,0;(m-k,1),(p-k,1),(k,1))$ with $m \geq k+3$ and $k>1$ are given as follows:

\begin{enumerate}
\item $w^1_2(i,j)=(0, \dots, 0,2,0, \dots, 0 \arrowvert 0, \dots, 0,2,0, \dots, 0 \arrowvert 0, \dots, 0 \arrowvert -1)$. Here, $1 \leq i \leq m-k-1$ and $1 \leq j \leq p-k-1$ denote the place where $2$ appears (e.g., $w^1_2(1,1)=(2, \dots, 0 \arrowvert 2, \dots, 0 \arrowvert 0, \dots, 0 \arrowvert -1)$).\\

    \noindent $w^2_2(i,j)=(0, \dots, 0,2,0, \dots, 0 \arrowvert 0, \dots, 0  \arrowvert 0, \dots, 0,2,0, \dots, 0  \arrowvert -1)$. Here, $1 \leq i \leq m-k-1$ and $1 \leq j \leq k-1$ denote the place where $2$ appears.\\

    \noindent $w^3_2(i,j)=(0, \dots, 0 \arrowvert 0, \dots, 0,2,0, \dots, 0 \arrowvert 0, \dots, 0,2,0, \dots, 0  \arrowvert -1)$. Here $1 \leq i \leq p-k-1$ and $1 \leq j \leq k-1$ denote the place where $2$ appears.\\

\item $w^1_1(i,\pm 1)=(0, \dots, 0,2,0, \dots, 0 \arrowvert 0, \dots, 0 \arrowvert 0, \dots, 0 \arrowvert \pm 1)$. Here, $1 \leq i \leq m-k-1$ denotes the place where $2$ appears; the last element can be either $+1$ or $-1$.\\

    \noindent $w^2_1(i, \pm 1)=(0, \dots, 0 \arrowvert  0, \dots, 0,2,0, \dots, 0 \arrowvert 0, \dots, 0 \arrowvert \pm 1)$.  Here, $1 \leq i \leq p-k-1$ denotes the place where $2$ appears; the last element can be either $+1$ or $-1$.\\

    \noindent $w^3_1(i, \pm 1)=(0, \dots, 0 \arrowvert 0, \dots, 0 \arrowvert 0, \dots, 0,2,0, \dots, 0 \arrowvert \pm 1)$. Here, $1 \leq i \leq k-1$ denotes the place where $2$ appears; the last element can be either $+1$ or $-1$.\\

    \noindent  $w^4_1=(0, \dots, 0 \arrowvert 0, \dots, 0 \arrowvert 0, \dots, 0 \arrowvert 3)$.\\

\item $w^1_0=(0, \dots, 0  \arrowvert 0, \dots, 0 \arrowvert 0, \dots, 0 \arrowvert 1)$. \\

\noindent $w^2_0=(0, \dots, 0  \arrowvert 0, \dots, 0 \arrowvert 0, \dots, 0 \arrowvert -1)$.
\end{enumerate}
In the above notation, we divide vectors by vertical bars to 4 blocks, which contain $m-k-1$, $p-k-1$, $k-1$, and $1$ elements, respectively. The subscripts in $w$ stand for the number of $2$ in the corresponding vector, and the superscripts are used to distinguish different type of those vectors which contain the same number of $2$.
\end{lemma}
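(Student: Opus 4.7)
The plan is to apply the Ozsv\'ath--Szab\'o push-down algorithm recalled around (\ref{eq14})--(\ref{dSFS}) directly to the plumbing of Figure \ref{M1}. Since every non-central vertex has weight $-2$ and the central vertex has weight $-3$, the constraint (\ref{eq14}) forces each chain entry of an initial characteristic vector $w_0$ to lie in $\{0,2\}$ and its central entry to lie in $\{-1,1,3\}$. A push-down at a chain vertex with value $2$ changes that entry to $-2$ and raises its (at most two) adjacent entries by $2$; a push-down at the central vertex with value $3$ changes the central entry to $-3$ and raises by $2$ the three chain endpoints attached to it. A path is maximising precisely when it terminates at a vector whose chain entries all lie in $[-2,0]$ and whose central entry lies in $[-3,1]$.

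For each listed family I would exhibit a maximising push-down sequence explicitly. Family (3) is immediate: both $w^1_0$ and $w^2_0$ already satisfy the termination inequalities, so the trivial length-zero path is maximising. The vectors $w^1_1,w^2_1,w^3_1$ in family (2) each have a single $2$ on exactly one chain and central value $\pm 1$; the push-down propagates the $2$ outward from position $i$ along the excited chain as a travelling wavefront, and the process resolves once the wavefront has traversed the chain. The vector $w^4_1$ (central value $3$, all chain entries $0$) is more subtle: the first push-down occurs at the center, launching an excitation into each of the three chains simultaneously, and subsequent interactions of returning wavefronts with the central vertex must be tracked carefully before termination. Family (1) is handled analogously, with two simultaneous wavefronts propagating along the two initially excited chains. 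In each case I would set up a simple invariant describing the support of nonzero entries at each step in order to confirm that no value ever exceeds $-\omega(v)$ at an intermediate stage (which would force a non-maximising termination), and that the process terminates in finitely many steps at a vector in the required range.

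Completeness of the list follows by counting. Writing $a=m-k-1$, $b=p-k-1$, $c=k-1$, families (1), (2), (3) contain
\[
ab+ac+bc,\qquad 2a+2b+2c+1,\qquad 2
\]
vectors respectively, summing to $ab+ac+bc+2a+2b+2c+3=(a+c+2)(b+c+2)-(c+1)^2=pm-k^2$. A direct block-determinant calculation (using that each $(-2)$-chain of length $\ell$ has determinant $(-1)^\ell(\ell+1)$ and expanding about the central row) gives $|\det Q_M|=pm-k^2=|H_1(M)|$, so provided the listed vectors represent pairwise distinct ${\rm Spin}^c$ classes---equivalently, no two of them differ by an element of $2Q_M\cdot H_2(X)$---they enumerate all maximisers. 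The main obstacle I anticipate is the combinatorial bookkeeping of the push-down dynamics, especially for $w^4_1$ where excitations propagate down all three chains and can repeatedly return to the center; once this is handled, the distinctness check is a routine calculation using the explicit structure of $Q_M$.
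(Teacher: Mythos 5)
Your strategy—exhibit a maximising push‑down sequence for each listed vector, then close off by a count—is the logical dual of the paper's argument. The paper instead shows that every characteristic vector satisfying~(\ref{eq14}) that is \emph{not} on the list initiates a non‑maximising path: the key observation is that a push‑down of a $2$ followed by a substring $(2,0,\dots,0,2)$ within a single chain eventually produces a $4$, so a maximiser has at most one value $-\omega(v)$ per block, hence at most four in total; the cases with two, three or four such positions, and the three possible central values $-1,1,3$, are then run through by pigeonhole, killing everything off the list. Combined with the Ozsv\'ath--Szab\'o count \cite[Lemma 2.7, Prop.\ 3.2]{OSd} that the number of $(\ref{eq14})$‑compatible maximising‑path initiators equals $|H_1(M)|=pm-k^2$, this forces the remaining $pm-k^2$ listed vectors to be precisely the maximisers \emph{without} having to trace their dynamics individually. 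This is the main advantage of the paper's direction: the exclusion argument has a uniform block‑by‑block structure and sidesteps the thorniest part of your plan.

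That thorniest part is a genuine gap as things stand. For $w^4_1=(0,\dots,0\,|\,0,\dots,0\,|\,0,\dots,0\,|\,3)$, the first push at the central vertex sends it to $-3$ and launches a $2$ into each chain; but each of those waves, when pushed, feeds $+2$ back into the center, raising it to $3$ again, and after a second central push the waves must be tracked through repeated reflections off both ends of each chain. You flag this yourself (``subsequent interactions of returning wavefronts with the central vertex must be tracked carefully''), but the proposal does not actually carry out that analysis or produce the promised invariant; without it the claim that $w^4_1$ initiates a maximising path is unproved. The same, to a lesser degree, applies to the two‑excitation families $w^\bullet_2$. You would need either to complete this bookkeeping rigorously or, more efficiently, to switch to the exclusion direction. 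One further small point: the distinctness condition you worry about (pairwise distinct ${\rm Spin}^c$ classes, i.e.\ no two listed $w$ differing by $2Q_M\cdot H_2(X)$) is stronger than what you need. Since the listed vectors are visibly pairwise distinct as elements of ${\rm Char}(G)$ satisfying~(\ref{eq14}), and the Ozsv\'ath--Szab\'o count bounds the number of such maximising‑path initiators by exactly $pm-k^2$, distinctness as vectors already closes the count—provided you first establish that they really are maximising‑path initiators, which brings you back to the gap above. Your determinant identity $(a+c+2)(b+c+2)-(c+1)^2=pm-k^2$ and the count $ab+ac+bc+2(a+b+c)+3$ are both correct.
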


\begin{proof}
We see that there is no bad vertex in the plumbing diagram of $M$. It follows from \cite[Lemma 2.7, Proposition 3.2]{OSd} that the number of characteristic vectors which satisfy (\ref{eq14}) and initiate a maximising path must equal the number of $\rm Spin^c$ structures over $M$.

Given a vector $w$ satisfying (\ref{eq14}), suppose $w$ contains a substring $(2,0, \dots, 0,2)$ in one of the blocks in the above vector notation.  When we push down the 2's from left to right in the substring, we will eventually obtain a $4$ at the last spot of the substring. Thus, we conclude that if there exist two vertices $v \in V(G)$ in the same block satisfying $\langle w , [S_v] \rangle=-\omega(v)$, then $w$ initiates a non-maximising path.   
So for a maximiser $w$, there are at most four $v \in V(G)$ such that $\langle w , [S_v] \rangle=-\omega(v)$; because otherwise, the pigeonhole principle implies that there must be two of $v \in V(G)$ in the same block.

Now we consider the following 5 cases.

(1) There are four $v \in V(G)$ such that $\langle w , [S_v] \rangle=-\omega(v)$. Then the vector looks like $(0, \dots, 0,2,\\0, \dots, 0 \arrowvert 0, \dots, 0,2,0, \dots, 0  \arrowvert 0, \dots, 0,2,0, \dots, 0  \arrowvert 3)$. Pushing down the last element 3, we will have two 2's in each of the first 3 blocks. So this $w$ initiates a non-maximising path.

(2) There are three $v \in V(G)$ such that $\langle w , [S_v] \rangle=-\omega(v)$. If $w$ has two 2's in two of the first 3 blocks and a 3 in the last block, then similar to (1) we will eventually get two 2's in one block after pushing down the 3. If $w$ looks like $(0, \dots, 0,2,0, \dots, 0 \arrowvert 0, \dots, 0,2,0, \dots, 0  \arrowvert 0, \dots, 0,2,0, \dots, 0  \arrowvert j)$ with $-1 \leq j \leq 1$, then after we push down the 2's, the last element $j$ will change to $j+6 >3$.

(3) There are two $v \in V(G)$ such that $\langle w , [S_v] \rangle=-\omega(v)$. If $w$ has a 3 in the last block and a 2 in one of the first 3 blocks, then similar to (1) we will eventually get two 2's in one block. If $w$ has two 2's, then after we push down the 2's, the last element $j$ will change to $j+4$. So $j$ can only be -1. By doing a similar pushing down, this shows that vectors of type $w^1_2$, $w^2_2$ and $w^3_2$ initiate maximising paths.


(4) There are one $v \in V(G)$ such that $\langle w , [S_v] \rangle=-\omega(v)$. Similar argument implies that only the vectors of type $w^1_1, w^2_1, w^3_1, w^4_1$ initiate maximising paths.

(5) There is no $v \in V(G)$ such that $\langle w , [S_v] \rangle=-\omega(v)$. In fact, only $w^1_0$ and $w^2_0$ are vectors of such type, and both of them initiate maximising paths.

In summary, we have found a total number of $pm-k^2$ vectors which agrees with the number of $\rm Spin^c$ structures over $M$. Each of them is a maximiser of Formula (\ref{dSFS}) for the corresponding $\rm Spin^c$ structure.


\end{proof}

The next goal is to compute $d(M,\mathfrak{t}_M)$ and $d(M,\mathfrak{t}_M + i^{\ast} PD[\mu])$, where $\mathfrak{t}_M$ is the unique self-conjugate $\rm Spin^c$ structure on $M$. The following lemma determines the corresponding maximisers.  

\begin{lemma}
\label{correpondingcovoter}
Let $M=M(0,0;(m-k,1),(p-k,1),(k,1))$ with $m \geq k+3$ and $k>1$.
\begin{enumerate}
\item When $k$ is even, the characteristic vector $w^3_1(\frac{k}{2},-1)=(0, \dots, 0 \arrowvert 0, \dots, 0 \arrowvert 0, \dots, 0,2,0, \dots, 0 \arrowvert -1)$ (resp. $w^2_2(1,\frac{k}{2})=(2, \dots, 0 \arrowvert 0, \dots, 0 \arrowvert 0, \dots, 0,2,0, \dots, 0 \arrowvert -1)$) is the maximiser of Formula (\ref{dSFS}) in the equivalence class,  which corresponds to the $\rm Spin^c$ structure $\mathfrak{t}_M$ (resp. $\overline{\mathfrak{t}_M + i^{\ast} PD[\mu]}$).
\item When $k$ is odd, the characteristic vector $w^2_1(\frac{p-k}{2},-1)=(0, \dots, 0 \arrowvert 0, \dots, 0,2,0, \dots, 0 \arrowvert 0, \dots, 0 \arrowvert -1)$ (resp. $w^1_2(1,\frac{p-k}{2})=(2, \dots, 0 \arrowvert 0, \dots, 0,2,0, \dots, 0  \arrowvert 0, \dots, 0 \arrowvert -1)$) is the maximiser of Formula (\ref{dSFS}) in the equivalence class, which corresponds to the $\rm Spin^c$ structure $\mathfrak{t}_M$ (resp. $\overline{\mathfrak{t}_M + i^{\ast} PD[\mu]}$).
\end{enumerate}
\end{lemma}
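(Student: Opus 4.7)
The plan is to split the argument into two parts: first showing that $w^3_1(k/2,-1)$ (resp.\ $w^2_1((p-k)/2,-1)$ for $k$ odd) represents the self-conjugate $\rm Spin^c$ structure $\mathfrak{t}_M$, and then identifying the partner vector with $\overline{\mathfrak{t}_M+i^{\ast}PD[\mu]}$. A preliminary observation is that Lemma~\ref{maximiser} produces exactly $pm-k^2 = |H_1(M)|$ maximisers, one per $\rm Spin^c$ structure; thus each vector on that list is automatically the maximiser in its own equivalence class, and the only real task is to identify which class it belongs to.

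For the self-conjugacy part, I would use the criterion that $\mathfrak{t}(w)$ is self-conjugate if and only if $-w \equiv w$ modulo $2\,Q_M\mathbb{Z}^{|V|}$, that is, $w\in Q_M\mathbb{Z}^{|V|}$. Under the standing hypothesis that $|H_1(M)|$ is odd, $\mathfrak{t}_M$ is the unique self-conjugate $\rm Spin^c$ structure, so it suffices to exhibit an integer vector $v$ with $Q_Mv = w^3_1(k/2,-1)$ when $k$ is even. The natural candidate is $v$ supported on the third block with piecewise-linear entries $v_i = -\min(i,\,k-i)$ for $1\leq i\leq k-1$; this forms a discrete tent of slope $\pm 1$ that extends by $v_0=0$ at the formal end, attains its minimum $-k/2$ at $i=k/2$, and equals $-1$ at $i=k-1$ (the end adjacent to the central vertex in Figure~\ref{M1}). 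A direct verification against the tridiagonal third block of $Q_M$ then confirms $Q_Mv = w^3_1(k/2,-1)$: the interior second differences $v_{i-1}-2v_i+v_{i+1}$ vanish except at $i=k/2$, where the kink produces a $+2$; the last interior entry gives $v_{k-2}-2v_{k-1}+0 = 0$; and the entry at the central vertex equals $v_{k-1}=-1$. The $k$ odd case is parallel, with $v$ supported on the second block and entries $v_j = -\min(j,\,(p-k)-j)$, noting that $p-k$ is even because $p\geq 5$ is an odd prime.

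For the partner vector, observe that since $\mathfrak{t}_M$ is self-conjugate, $\overline{\mathfrak{t}_M + i^{\ast}PD[\mu]} - \mathfrak{t}_M = -i^{\ast}PD[\mu]$ in $\mathrm{coker}(Q_M) \cong H^2(M)$. In both parities of $k$, the difference of the two listed vectors equals
\[
(2,0,\dots,0 \mid 0,\dots,0 \mid 0,\dots,0 \mid 0),
\]
so I need to show that this class represents $-i^{\ast}PD[\mu]$. For this I would trace the meridian $\mu$ through the translation between the surgery description $M = Y_{m\mu+\lambda}(K(p,1,k))$ and the negative-definite plumbing of Figure~\ref{M1}. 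Using the presentation $H_1(M)=\mathbb{Z}/(pm-k^2)$ with $[\mu]=p[\theta]$ from Section~\ref{Homological analysis}, one reads off the Poincar\'e dual of $\mu$ via the discriminant form on the plumbing and matches it with the claimed class.

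The principal obstacle is precisely this last translation between the plumbing and surgery descriptions of $M$, which is standard but technical. Once it is in hand, everything else reduces to the linear-algebra verification of the self-conjugacy, while the maximiser status is automatic from the count in Lemma~\ref{maximiser}.
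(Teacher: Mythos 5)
Your proposal is correct and follows essentially the same route as the paper: you use the same criterion that the self-conjugate class is the one lying in the image of $Q_M$, exhibit the same tent-shaped preimage vector $v$ (the paper writes it as $(0,\dots,0 \mid 0,\dots,0 \mid -1,-2,\dots,-\tfrac{k}{2},\dots,-2,-1 \mid 0)$, which is exactly your $v_i=-\min(i,k-i)$), and identify the partner class by tracing $\mu$ through the plumbing description of $M$ so that $PD[D_1]=-PD[\mu]$. The paper likewise leaves the last homological translation as a ``systematic yet strenuous computation,'' so your outline and the paper's proof are at the same level of completeness.
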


\begin{proof}
Since the first Chern class $c_1$ of a self-conjugate $\rm Spin^c$ structure is 0, the unique self-conjugate $\rm Spin^c$ structure $\mathfrak{t}_M$ must correspond to the equivalence class of characteristic vectors which are in the image of $Q_M$.

When $k$ is even, we can see that among all maximisers, only $w^3_1(\frac{k}{2},-1)$ satisfies this; more precisely, $w^3_1(\frac{k}{2},-1)=Q_M v^T$ where $v=(0, \dots, 0 \arrowvert 0, \dots, 0 \arrowvert -1, -2, \dots, -\frac{k}{2}+1, -\frac{k}{2}, -\frac{k}{2}+1, \dots, -2, -1 \arrowvert 0)$.

When $k$ is odd, only $w^2_1(\frac{p-k}{2},-1)$ is in the image of $Q_M$; more precisely, $w^2_1(\frac{p-k}{2},-1)=Q_M v^T$, where $v=(0, \dots, 0 \arrowvert -1, -2, \dots, -\frac{p-k}{2}+1, -\frac{p-k}{2}, -\frac{p-k}{2}+1, \dots, -2, -1 \arrowvert 0, \dots, 0 \arrowvert 0)$.

To find the maximiser corresponding to $\mathfrak{t}_M + i^{\ast} PD[\mu]$, we represent $M$ as the boundary of a plumbed 4-manifold $X$ given by the framed link $\mathbb{L}=((K_1,m_1), \dots, (K_{p+m-k-2}, m_{p+m-k-2}))$ in Figure \ref{framinglink}. Denote by $D_i$ a small normal disk to $K_i$, and $\partial D_i=\mu_i$ the meridian of $K_i$. A systematic yet strenuous computation of homology shows that $\mu=-\mu_1$.  Thus, $i^{\ast}(PD[\mu])$ corresponds to $-PD[D_1] \in H^2(X)$ which is represented by the vector $(-1,0,\dots,0 \arrowvert 0, \dots, 0 \arrowvert 0, \dots, 0 \arrowvert 0)$. Therefore, $\overline{\mathfrak{t}_M + i^{\ast} PD[\mu]}$ corresponds to the characteristic vector $w^2_2(1,\frac{k}{2})$ (resp. $w^1_2(1,\frac{p-k}{2})$) when $k$ is even (resp. odd).

\begin{figure}[!h]
\centering
\includegraphics[width=6in]{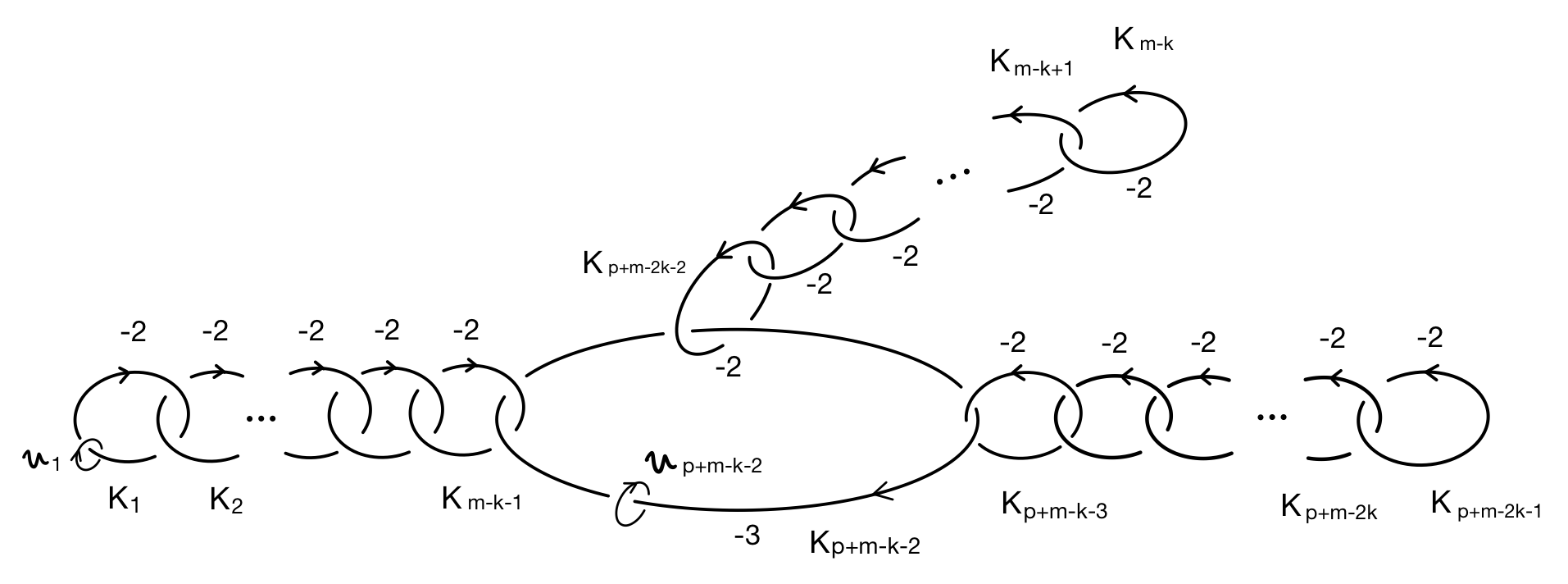}
\caption{The surgery description $\mathbb{L}=((K_1,m_1), \dots, (K_{p+m-k-2}, m_{p+m-k-2}))$ of the plumbed 4-manifold $X$ with $\partial X=M$. Denote by $D_i$ a small normal disk to $K_i$, and $\partial D_i=\mu_i$ the meridian of $K_i$.}
\label{framinglink}
\end{figure}




\end{proof}

Now we can use Formula (\ref{dSFS}) to compute $d(M,\mathfrak{t}_M)$ and $d(M,\mathfrak{t}_M + i^{\ast} PD[\mu])$.

\begin{itemize}
  \item When $k$ is even,
  \begin{align*}
  d(M,\mathfrak{t}_M)&=\frac{m+p-2k-2}{4}\\
  d(M,\mathfrak{t}_M + i^{\ast} PD[\mu])&=\frac{pm^2-(6p+2kp-p^2+k^2)m+4p+6k^2+2k^3-pk^2}{4(pm-k^2)}
  \end{align*}
  \item When $k$ is odd,
  \begin{align*}
  d(M,\mathfrak{t}_M)&=\frac{m-2}{4}\\
  d(M,\mathfrak{t}_M + i^{\ast} PD[\mu])&=\frac{pm^2-(6p+k^2)m+6k^2+4p}{4(pm-k^2)}
  \end{align*}
\end{itemize}


\bigskip

\begin{proof}[Proof of Theorem \ref{theoremgen} (\romannumeral3)]
Suppose $L(n,1)$ with $n$ odd is obtained by $(m \mu+\lambda)$-surgery along $K$ for some $m \geq k+3$. Then $m \geq (\frac{p+k}{2p}) \cdot k +1 > \frac{k^2}{p}$. So we can apply the $d$-invariant surgery formula in Proposition \ref{prop ratinoal d-inv1}. We divide the computation into 2 cases:

\medskip\noindent
Case \romannumeral1: $k$ is even. By (\ref{firsthomology}), $|n|=pm-k^2$.

If $n=pm-k^2$, Formula (\ref{dinv1}) implies
\[d(L(pm-k^2,1),0)=d(M,\mathfrak{t}_M)-2V_{\xi_0}.\]
Then
\[V_{\xi_0}=\frac{p+k^2-2k-1-(p-1)m}{8} \leq \frac{p+k^2-2k-1-(p-1)(k+3)}{8}=\frac{k^2+2-kp-2p-k}{8}<0,\]
which contradicts the fact that $V_{\xi_0} \geq 0$.

If $n=-pm+k^2$, Formula (\ref{dinv1}) gives
\[-d(L(pm-k^2,1),0)=d(M,\mathfrak{t}_M)-2V_{\xi_0},\]
which implies
\[V_{\xi_0}=\frac{(p+1)m+p-2k-k^2-3}{8} \geq \frac{(p+1)(k+3)+p-2k-k^2-3}{8}= \frac{pk+4p-k(k+1)}{8} \geq 2.\]
We can thus apply Formula (\ref{dinv2}) and get
\[-d(L(pm-k^2,1), 0+i^{\ast}PD[\mu])=d(M,\mathfrak{t}_M+i^{\ast}PD[\mu])-2V_{\xi_0+PD[\mu]}.\]
Here, $-d(L(pm-k^2,1),0+i^{\ast}PD[\mu])= \frac{1}{4}-\frac{(2j-(pm-k^2))^2}{4(pm-k^2)}$
for some $j \in [0,pm-k^2-1]$. Therefore,
\begin{equation}
\label{eq19}
\frac{1}{4}-\frac{(2j-(pm-k^2))^2}{4(pm-k^2)}=\frac{pm^2-(6p+2kp-p^2+k^2)m+4p+6k^2+2k^3-pk^2}{4(pm-k^2)}-2V_{\xi_0+PD[\mu]}
\end{equation}
Since $V_{\xi_0}-1 \leq V_{\xi_0+PD[\mu]} \leq V_{\xi_0}$, we further divide it into two cases:

Case \romannumeral1(a): $V_{\xi_0+PD[\mu]}=V_{\xi_0}=\frac{(p+1)m+p-2k-k^2-3}{8}$. Equation (\ref{eq19}) can be simplified to
\begin{equation}
\label{eq26}
j^2-(pm-k^2)j+p+k^2-mp=0.
\end{equation}
We claim that the function $f(x)=x^2-(pm-k^2)x+p+k^2-mp$ has no root in $[0,pm-k^2-1]$. Indeed, $f(x)<0$ for any $x \in [0,pm-k^2-1]$ since its axis of symmetry is $x=\frac{pm-k^2}{2}$ and $f(0)=p+k^2-mp \leq p+k^2-3p-kp <0$. So this case is impossible.

\medskip
Case \romannumeral1(b): $V_{\xi_0+PD[\mu]}=V_{\xi_0}-1=\frac{(p+1)m+p-2k-k^2-3}{8}-1$. Equation (\ref{eq19}) can be simplified to
\begin{equation}
\label{eq27}
j^2-(pm-k^2)j+p+pm-k^2=0.
\end{equation}
Let $f(x)=x^2-(pm-k^2)x+p+pm-k^2$. The axis of symmetry of $f(x)$ is $x=\frac{pm-k^2}{2}$, and $f(0)=p+pm-k^2>0$, $f(1)=p+1>0$ and $f(2)=p+4-pm+k^2 \leq p+4+k^2-3p-kp <0$. Therefore, the roots of $f(x)$ lie in $(1,2)$ and $(pm-k^2-2,pm-k^2-1)$, which are not integers. This gives a contradiction.

%

\bigskip\noindent
Case \romannumeral2: $k$ is odd.

If $n=pm-k^2$, Formula (\ref{dinv1}) implies
\[d(L(pm-k^2,1),0)=d(M,\mathfrak{t}_M)-2V_{\xi_0}.\]
We compute
\[V_{\xi_0}=\frac{k^2-1-(p-1)m}{8} \leq \frac{k^2-1-(p-1)(k+3)}{8}=\frac{k(k+1)+2-pk-3p}{8}<0,\]
which is a contradiction.

If $n=-pm+k^2$, Formula (\ref{dinv1}) gives
\[-d(L(pm-k^2,1),0)=d(M,\mathfrak{t}_M)-2V_{\xi_0},\]
which implies
\[V_{\xi_0}=\frac{(p+1)m-k^2-3}{8} \geq \frac{(p+1)(k+3)-k^2-3}{8}= \frac{pk+3p+k-k^2}{8} \geq 2.\]
We can thus apply Formula (\ref{dinv2}) and get
\[-d(L(pm-k^2,1), 0+i^{\ast}PD[\mu])=d(M,\mathfrak{t}_M+i^{\ast}PD[\mu])-2V_{\xi_0+PD[\mu]}.\]
Here, $-d(L(pm-k^2,1),0+i^{\ast}PD[\mu])= \frac{1}{4}-\frac{(2j-(pm-k^2))^2}{4(pm-k^2)}$ for some $j \in [0,pm-k^2-1]$.
Therefore
\begin{equation}
\label{eq20}
\frac{1}{4}-\frac{(2j-(pm-k^2))^2}{4(pm-k^2)}=\frac{pm^2-(6p+k^2)m+4p+6k^2}{4(mp-k^2)}-2V_{\xi_0+PD[\mu]}
\end{equation}
Since $V_{\xi_0}-1 \leq V_{\xi_0+PD[\mu]} \leq V_{\xi_0}$, we further divide it into two cases:

Case \romannumeral2(a): $V_{\xi_0+PD[\mu]}=V_{\xi_0}=\frac{(p+1)m-k^2-3}{8}$. Equation (\ref{eq20}) can be simplified to
\[j^2-(pm-k^2)j+p+k^2-mp=0\]
which is the same as Equation (\ref{eq26}). So the function $f(x)=x^2-(pm-k^2)x+p+k^2-mp$ has no root in $[0,pm-k^2-1]$, and this case can be ruled out.

\medskip
Case \romannumeral2(b): $V_{\xi_0+PD[\mu]}=V_{\xi_0}-1=\frac{(p+1)m-k^2-3}{8}-1$. Equation (\ref{eq20}) can be simplified to
\[j^2-(pm-k^2)j+p+pm-k^2=0,\]
which is the same as Equation (\ref{eq27}). Therefore the same argument can be applied to rule out this case.  This finishes all the cases and the proof.
\end{proof}

\section{Distance one surgeries on $L(5,1)$ and $L(7,1)$}
\label{Distance one surgeries on $L(5,1)$ and $L(7,1)$}
DNA knots and links in vivo experiments involving plasmids are often of small crossing numbers. This motivates our study of band surgeries from $T(2,5)$ and $T(2,7)$ to $T(2,n)$ in this section.

\subsection{Distance one surgeries on $L(5,1)$}
\label{L(5,1)}
As Theorem \ref{theoremgen} (\romannumeral1)(\romannumeral2) has handled the case $k=0,1$, we only need to consider the case $k=2$.  By (\ref{firsthomology}), the first homology of $L(n,1)$ is $|n|=|5m-4|$. As Theorem \ref{theoremeven} has completely solved the even $n$ case, we are left with the odd $n$ case.  So we assume that $m$ is an odd integer.  

If $m \leq -1$, we use the negative-definite plumbing diagram in Figure \ref{case1plumbing} to compute the $d$-invariant of the Seifert fiber space $M=M(0,0;(m-2,1),(3,1),(2,1))$.  We also compute its intersection form $Q_M$.

\begin{figure}[h!]
\begin{minipage}{0.48\linewidth}
 \centerline{\includegraphics[width=4.7cm]{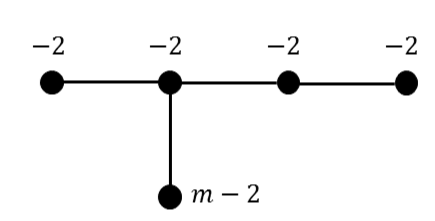}}
\end{minipage}
\hfill
\begin{minipage}{.48\linewidth}
$Q_M=
\begin{bmatrix}
 -2 & 1 &    &     &   \\
 1  & -2&    &     & 1 \\
    &   & -2 &     & 1 \\
    &   &    & m-2 & 1 \\
    & 1 &  1 &   1 & -2
\end{bmatrix}$
\end{minipage}
\caption{The negative-definite plumbing diagram and the corresponding intersection form of the Seifert fiber space $M(0,0;(m-2,1),(3,1),(2,1))$ for $m\leq -1$.}
\label{case1plumbing}
\end{figure}

We see that there is only one bad vertex in the graph, and $Q_M$ is negative definite. Applying Ozsv\'{a}th and Szab\'{o}'s algorithm, we can obtain $-5m+4$ candidates of maximiser by a similar argument as in Lemma \ref{maximiser}.

\begin{itemize}
\item $(2, 0, 0, j, 0)$, $(0, 0, 2, j, 0)$ and $(0,0,0,j,0)$ for $m \leq j \leq -m$.
\item $(0, 2, 0, j, 0)$ and $(0, 0, 0, j, 2)$ for $m \leq j \leq -m-2$.
\item $(0,0,0,-m+2,0)$.
\end{itemize}

Meanwhile, the order of the first homology of $(m \mu +\lambda)$-surgered manifold is precisely $-5m+4$. Therefore, each of these $-5m+4$ vectors must be the maximiser of Formula (\ref{dSFS}) for the corresponding $\rm Spin^c$ structure. By a similar argument of Lemma \ref{correpondingcovoter}, we can also show that the vector $(2,0,0,-m,0)$ (resp. $(0,0,2,m,0)$) corresponds to the $\rm Spin^c$ structure $\mathfrak{t}_M$ (resp. $\overline{\mathfrak{t}_M+i^{\ast}PD[\mu]}$).  We compute the $d$-invariant of $M$ as follows.

\[d(M,\mathfrak{t}_M)=\frac{m+1}{4}, \quad  d(M,\mathfrak{t}_M+i^{\ast}PD[\mu])=\frac{-5m^2-21m}{4(-5m+4)}.\]



\bigskip

\begin{proof}[Proof of Theorem \ref{theorem57} (\romannumeral1)]
Theorem \ref{theoremeven} implies that there is a distance one surgery from $L(5,1)$ to $L(n,1)$ with $n$ even if and only if $n=4,6$. When $n$ is odd and $K$ is null-homologous or $K$ is homologically essential with $k=1$, Theorem \ref{theoremgen} (\romannumeral1)(\romannumeral2) implies that $n$ can only be $\pm 1, \pm 5,-9$.  From now on, we assume that $n$ is odd and the winding number $k=2$. We divide the proof according to different values of $m$.

\medskip\noindent
(1) Suppose $m < -1$.  If $n=-5m+4$, Formula (\ref{dinv3}) gives
\[d(L(-5m+4,1),0)-d(M,\mathfrak{t}_M)=2V_{\xi_0},\]
which implies $V_{\xi_0}=\frac{-3m+1}{4} \geq 2$.  We can thus apply Formula (\ref{dinv4}) and get
\begin{equation}
\label{eq21}
d(L(-5m+4,1),0+i^{\ast}PD[\mu])-d(M,\mathfrak{t}_M+i^{\ast}PD[\mu])=2V_{\xi_0+PD[\mu]},
\end{equation}
where $d(L(-5m+4,1),0+i^{\ast}PD[\mu])=-\frac{1}{4}+\frac{(2j-(-5m+4))^2}{4(-5m+4)}$ for some $j \in [0,-5m+3]$.

\medskip
\noindent
Case \romannumeral1: $V_{\xi_0+PD[\mu]}=V_{\xi_0}=\frac{-3m+1}{4}$. Equation (\ref{eq21}) can be simplified to
\[j^2-(-5m+4)j+5m+1=0.\]
We claim that the function $f(x)=x^2-(-5m+4)x+5m+1$ has no root in $[0,-5m+3]$. Indeed, $f(x)<0$ for any $x \in [0,-5m+3]$ since its axis of symmetry is $x=\frac{-5m+4}{2}$ and $f(0)=5m+1<0$.

\medskip\noindent
Case \romannumeral2: $V_{\xi_0+PD[\mu]}=V_{\xi_0}-1=\frac{-3m+1}{4}-1$. Equation (\ref{eq21}) can be simplified to
\[j^2-(-5m+4)j-5m+9=0.\]
Let $f(x)=x^2-(-5m+4)x-5m+9$. The axis of symmetry of $f(x)$ is $x=\frac{-5m+4}{2}$, and $f(0)=-5m+9>0$, $f(1)=6>0$ and $f(2)=5m+5<0$ when $m<-1$, thus the roots of $f(x)$ lie in $(1,2)$ and $(-5m+2,-5m+3)$, which are not integers.


If $n=5m-4$, Formula (\ref{dinv3}) gives
\[-d(L(-5m+4,1),0)-d(M,\mathfrak{t}_M)=2V_{\xi_0},\]
which implies $V_{\xi_0}=\frac{m-1}{2} <0$. This gives a contradiction.  Hence, we just proved that when $m<-1$ there is no desired distance one surgery.

\medskip\noindent
(2) When $m=-1$, we have $|n|=9$.  There is a distance one surgery from $L(5,1)$ to $L(9,1)$ given by the double branched cover of the band surgery between $T(2,5)$ and $T(2,9)$ as Figure \ref{bandsurgery5}. If $n=-9$, Formula (\ref{dinv3}) gives
\[-d(L(9,1),0)-d(M,\mathfrak{t}_M)=2V_{\xi_0},\]
which implies $V_{\xi_0}=-1<0$. This gives a contradiction.

\medskip\noindent
(3) When $m=1$, we have $|n|=1$.  There is a distance one surgery from $L(5,1)$ to $S^3$ given by the double branched cover of the band surgery shown in Figure \ref{bandsurgery5} (with $p=1$ and the reverse direction).

\medskip\noindent
(4) When $m=3$, we have $|n|=11$. The Seifert fibered manifold $M$ is actually a lens space: $M(0,0;(1,1),(3,1),(2,1))=L(11,3)$.  If $n=11$, then by Formula (\ref{dinv1}) we have
\[d(L(11,1),0)=d(L(11,3),1)-2V_{\xi_0}.\]
Thus $V_{\xi_0}=-1$ since $d(L(11,1),0)=\frac{5}{2}$ and $d(L(11,3),1)=\frac{1}{2}$, which contradicts $V_{\xi_0} \geq 0$.
If $n=-11$, Formula (\ref{dinv1}) gives
\[-d(L(11,1),0)=d(L(11,3),1)-2V_{\xi_0}.\]
Thus $V_{\xi_0}=\frac{3}{2}$, which is impossible since $V_{\xi_0}$ is supposed to be an integer.

\medskip\noindent
(5) When $m\geq 5$, Theorem \ref{theoremgen} (\romannumeral3) implies that no solution exists in this case.

\medskip
In summary, we conclude that the lens space $L(n,1)$ is obtained by a distance one surgery from $L(5,1)$ only if $n=\pm 1, 4, \pm 5, 6, \pm 9$; except for $n=-9$, distance one surgeries to all other $L(n,1)$ on this list can be realized as the double branched covers of the band surgeries in Figure \ref{bandsurgery}. Unfortunately, our Heegaard Floer $d$-invariant obstruction fails for the case of $L(5,1)$ to $L(-9,1)$ when the surgery is performed on a homologically essential knot $K$ with $k=1$.
\end{proof}




\subsection{Distance one surgeries on $L(7,1)$}
\label{L(7,1)}
As Theorem \ref{theoremgen} (\romannumeral1)(\romannumeral2) has handled the case $k=0,1$, we only need to consider the case $k=2$ or $3$.  When $m<1$, we use the negative-definite plumbing diagram in Figure \ref{k=2plumbing} and \ref{k=3plumbing} to compute the $d$-invariant of the Seifert fiber spaces $M=M(0,0;(m-2,1),(5,1),(2,1))$ and $M'=M(0,0; (m-3,1), (4,1), (3,1))$, corresponding to $k=2$ and $k=3$, respectively. We also compute their respective intersection forms.



\begin{figure}[!h]
\begin{minipage}{0.48\linewidth}
 \centerline{\includegraphics[width=5cm]{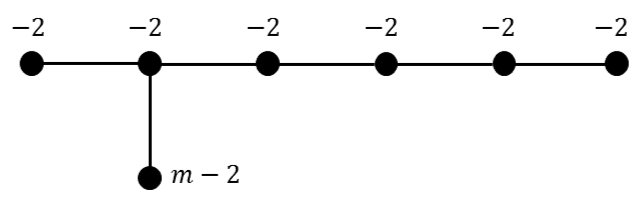}}
\end{minipage}
\hfill
\begin{minipage}{.48\linewidth}
$Q_M=
\begin{bmatrix}
 -2 & 1 &    &     &    &     &   \\
 1  & -2&  1 &     &    &     &   \\
    &  1& -2 &  1  &    &     &   \\
    &   &  1 &  -2 &    &     & 1 \\
    &   &    &     & -2 &     & 1 \\
    &   &    &     &    & m-2 & 1 \\
    &   &    &  1  &  1 &   1 &-2
\end{bmatrix}$
\end{minipage}
\caption{The plumbing diagrams and the corresponding intersection form of the Seifert fiber space $M=M(0,0;(m-2,1),(5,1),(2,1))$.}
\label{k=2plumbing}
\end{figure}

\begin{figure}[!h]
\begin{minipage}{0.48\linewidth}
 \centerline{\includegraphics[width=5cm]{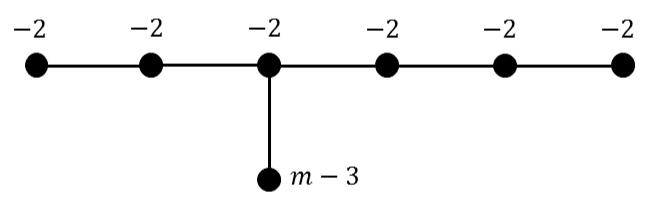}}
\end{minipage}
\hfill
\begin{minipage}{.48\linewidth}
$Q_{M'}=
\begin{bmatrix}
 -2 & 1 &    &     &    &     &   \\
 1  & -2&  1 &     &    &     &   \\
    &  1& -2 &     &    &     & 1 \\
    &   &    &  -2 &  1 &     &   \\
    &   &    &   1 & -2 &     & 1 \\
    &   &    &     &    & m-3 & 1 \\
    &   &  1 &     &  1 &   1 &-2
\end{bmatrix}$
\end{minipage}
\caption{The plumbing diagram and the corresponding intersection form of the Seifert fiber space $M'=M(0,0; (m-3,1), (4,1), (3,1))$.}
\label{k=3plumbing}
\end{figure}

Note that there is only one bad vertex in each of the two plumbing graphs, and both $Q_M$ and $Q_{M'}$ are negative definite. Similar to the case $L(5,1)$, we can obtain $-7m+4$ and $-7m+9$ maximisers for each case. Moreover, we can show that the vector $(2,0,0,0,0,-m,0)$ (resp. $(0,0,0,0,2,-m-1,0)$) corresponds to the $\rm Spin^c$ structure $\mathfrak{t}_M$ (resp. $\mathfrak{t}_{M'}$), and the vector $(0,0,0,0,2,m,0)$ (resp. $(0,2,0,0,0,m-1,0)$)
corresponds to the $\rm Spin^c$ structure $\overline{\mathfrak{t}_M+i^{\ast}PD[\mu]}$ (resp. $\overline{\mathfrak{t}_{M'}+i^{\ast}PD[\mu]}$) up to $\rm Spin^c$-conjugation.  Then we compute the $d$-invariant of $M$ and $M'$ as follows.

\[d(M,\mathfrak{t}_M)=\frac{m+3}{4}, \quad  d(M,\mathfrak{t}_M+i^{\ast}PD[\mu])=\frac{-7m^2-45m}{4(-7m+4)}.\]

\[d(M',\mathfrak{t}_{M'})=\frac{m}{4}, \quad  d(M',\mathfrak{t}_{M'}+i^{\ast}PD[\mu])=\frac{-7m^2-19m+8}{4(-7m+9)}.\]

\bigskip



Before we prove Theorem \ref{theorem57} (\romannumeral2), let us recall some facts of the linking form $lk: H_1(Y) \times H_1(Y) \rightarrow \mathbb{Q}/\mathbb{Z}$ for a rational homology sphere $Y$, which we also use as an obstruction in this case. If a rational homology sphere $Y$ has cyclic first homology, we can use a fraction to represent its linking form, which is the value $lk(x,x)$ for a generator $x$ of the first homology group. Let $N=Y-K$ be the exterior of a primitive knot $K$ in $Y$. Then $H_1(N)=\mathbb{Z}$. Choose two curves $m$ and $l$ on $\partial N$ such that $(m,l)$ is a basis of $H_1(\partial N)$, where $l$ is null-homologous in $N$ and $m$ generates $H_1(N)$. Let $N(pm+ql)$ denote the Dehn filling of $N$ along the curve $pm+ql$. The linking form of $N(pm+ql)$ is $\frac{q}{p}$ if $p \neq 0$. If two rational homology spheres $Y_1$ and $Y_2$ have cyclic first homology group with linking forms $ \frac{q_1}{p}$ and $\frac{q_2}{p}$ for $p>0$, then the two forms are equivalent if and only if $q_1 \equiv q_2 a^2 \,\, (mod \,\,p)$ for some integer $a$ with $gcd(a,p)=1$.

\bigskip

\begin{proof}[Proof of Theorem \ref{theorem57} (\romannumeral2)]
Theorem \ref{theoremeven} implies that there is a distance one surgery from $L(7,1)$ to $L(n,1)$ with $n$ even if and only if $n=6,8$. Theorem \ref{theoremgen} (\romannumeral1)(\romannumeral2) and the band surgeries we construct in Figure \ref{bandsurgery} shows that when $n$ is odd and $K$ is null-homologous or homologically essential with $k=1$, there exists a distance one surgery if and only if $n=\pm 1, 7$. From now on, we assume that $n$ is odd and the winding number $k=2$ or $3$.  By (\ref{firsthomology}), the order of the first homology of $L(n,1)$ equals $|n|=|7m-k^2|$.  Hence, when $k=2$, $m$ is odd; when $k=3$, $m$ is even.  Subsequently, we divide the proof according to different values of $k$ and $m$.

\medskip \noindent
(1) Suppose $k=2$ and $m < -1$.  If $n=-7m+4$, Formula (\ref{dinv3}) gives
\[d(L(-7m+4,1),0)-d(M,\mathfrak{t}_M)=2V_{\xi_0},\]
which implies $V_{\xi_0}=-m \geq 2$.  We can thus apply Formula (\ref{dinv4}) and get
\begin{equation}
\label{eq22}
d(L(-7m+4,1),0+i^{\ast}PD[\mu])-d(M,\mathfrak{t}_M+i^{\ast}PD[\mu])=2V_{\xi_0+PD[\mu]},
\end{equation}
where $d(L(-7m+4,1),0+i^{\ast}PD[\mu])=-\frac{1}{4}+\frac{(2j-(-7m+4))^2}{4(-7m+4)}$ for some $j \in [0,-7m+3]$.

\medskip
\noindent
Case \romannumeral1: $V_{\xi_0+PD[\mu]}=V_{\xi_0}=-m$. Equation (\ref{eq22}) can be simplified to
\[j^2-(-7m+4)j+7m+3=0.\]
However, the function $f(x)=x^2-(-7m+4)x+7m+3$ has no root in $[0,-7m+3]$. Indeed, $f(x)<0$ for any $x \in [0,-7m+3]$ since its axis of symmetry is $x=\frac{-7m+4}{2}$ and $f(0)=7m+3<0$. This give a contradiction.

\medskip
\noindent
Case \romannumeral2: $V_{\xi_0+PD[\mu]}=V_{\xi_0}-1=-m-1$. Equation (\ref{eq22}) can be simplified to
\[j^2-(-7m+4)j-7m+11=0.\]
Let $f(x)=x^2-(-7m+4)x-7m+11$. The axis of symmetry of $f(x)$ is $x=\frac{-7m+4}{2}$, and $f(0)=-7m+11>0$, $f(1)=8>0$ and $f(2)=7m+7<0$. Therefore the roots of $f(x)$ lie in $(1,2)$ and $(-7m+2, -7m+3)$, which are not integers.


If $n=7m-4$, Formula (\ref{dinv3}) gives
\[-d(L(-7m+4,1),0)-d(M,\mathfrak{t}_M)=2V_{\xi_0},\]
which implies $V_{\xi_0}=\frac{3m-3}{4}<0$. This gives a contradiction.

\medskip
\noindent
(2) Suppose $k=2$ and $m=-1$. Then $|n|=11$. There is a distance one surgery from $L(7,1)$ to $L(11,1)$ given by the double branched cover of the band surgery between $T(2,7)$ and $T(2,11)$ in Figure \ref{bandsurgery5}. If $n=-11$, Formula (\ref{dinv3}) gives
\[-d(L(11,1),0)-d(M,\mathfrak{t}_M)=2V_{\xi_0},\]
which implies $V_{\xi_0}=-\frac{3}{2}$. This gives a contradiction.


\medskip
\noindent
(3) Suppose $k=2$ and $m=1$.  By \cite{LMV}, there is no distance one surgery from $L(7,1)$ to $L(-3,1)$.  On the other hand, a distance one surgery along the simple knot $K(7,1,2)$ produces $L(3,1)$.

\medskip
\noindent
(4) Suppose $k=2$ and $m=3$.  Then $|n|=|7m-k^2|=17$, so the desired lens space is $L(17,1)$ or $L(-17,1)$. If $n=17$, then by Formula (\ref{dinv1}) we have
\[d(L(17,1),0)=d(L(17,3),1)-2V_{\xi_0},\]
which implies $V_{\xi_0}=-\frac{3}{2}<0$.  If $n=-17$, then Formula (\ref{dinv1}) gives \[-d(L(17,1),0)=d(L(17,3),1)-2V_{\xi_0}.\]
Thus $V_{\xi_0}=\frac{5}{2}$, which is impossible since $V_{\xi_0}$ is supposed to be an integer.

\medskip
\noindent
(5) Suppose $k=2$ and $m\geq 5$.  Theorem \ref{theoremgen} (\romannumeral3) implies that there is no solution in this case.

\medskip
\noindent
(6) Suppose $k=3$ and $m < 0$.  If $n=-7m+9$, Formula (\ref{dinv3}) gives
\[d(L(-7m+9,1),0)-d(M',\mathfrak{t}_{M'})=2V_{\xi_0},\]
which implies $V_{\xi_0}=-m+1\geq 2$. We can thus apply Formula (\ref{dinv4}) and get
\begin{equation}
\label{eq23}
d(L(-7m+9,1),0+i^{\ast}PD[\mu])-d(M',\mathfrak{t}_{M'}+i^{\ast}PD[\mu])=2V_{\xi_0+PD[\mu]},
\end{equation}
where $d(L(-7m+9,1),0+i^{\ast}PD[\mu])=-\frac{1}{4}+\frac{(2j-(-7m+9))^2}{4(-7m+9)}$ for some $j \in [0,-7m+8]$.

\medskip
\noindent
Case \romannumeral1: $V_{\xi_0+PD[\mu]}=V_{\xi_0}=-m+1$.  Equation (\ref{eq23}) can be simplified to
\[j^2-(-7m+9)j+7m-2=0.\]
However, the function $f(x)=x^2-(-7m+9)x+7m-2$ has no root in $[0,-7m+8]$. Indeed, $f(x)<0$ for any $x \in [-7m+8]$ since its axis of symmetry is $x=\frac{-7m+9}{2}$ and $f(0)=7m-2<0$. This give a contradiction.

\medskip
\noindent
Case \romannumeral2: $V_{\xi_0+PD[\mu]}=V_{\xi_0}-1=-m$.  Equation (\ref{eq23}) can be simplified to
\[j^2-(-7m+9)j-7m+16=0.\]
Let $f(x)=x^2-(-7m+9)x-7m+16$. The axis of symmetry of $f(x)$ is $x=\frac{-7m+9}{2}$, and $f(0)=-7m+16>0$, $f(1)=8>0$ and $f(2)=7m+2<0$. Therefore the roots of $f(x)$ lie in $(1,2)$ and $(-7m+7,-7m+8)$, which are not integers.

\medskip

If $n=7m-9$, Formula (\ref{dinv3}) gives
\begin{equation*}
-d(L(-7m+9,1),0)-d(M',\mathfrak{t}_{M'})=2V_{\xi_0}, \label{eq30}
\end{equation*}
which implies $V_{\xi_0}=\frac{3m-4}{4}<0$. This gives a contradiction.


\medskip
\noindent
(7) Suppose $k=3$ and $m=0$ (i.e. doing $\lambda$-surgery). If $n=7m-9=-9$, Formula (\ref{dinv3}) gives
\begin{equation*}
-d(L(9,1),0)-d(M',\mathfrak{t}_{M'})=2V_{\xi_0}, \label{eq30}
\end{equation*}
which implies $V_{\xi_0}=-1<0$. This gives a contradiction. If $n=-7m+9=9$, we use the linking form to obstruct this case. Let $m=4\mu+3\lambda$ and $l=9\mu+7\lambda$. One may check that $(m,l)$ forms a basis of $H_1(\partial (L(7,1)-K))$, where $m$ generates $H_1(L(7,1)-K)$ and $l$ is null-homologous in $L(7,1)-K$. We see $\lambda=-9m+4l$, thus the linking form of the surgered manifold is $\frac{-4}{9}$. However the linking form of the desired lens space $L(9,1)$ is $\frac{1}{9}$, and $-4$ is not a quadratic residue modulo $9$.


\medskip
\noindent
(8) Suppose $k=3$ and $m=2$. By our discussion in the previous section for distance one surgery on $L(5,1)$, we see that there is no distance one surgery from $L(7,1)$ to $L(\pm 5,1)$.

\medskip
\noindent
(9) Suppose $k=3$ and $m=4$.  Then $|n|=|7m-k^2|=19$, so the desired lens space is $L(19,1)$ or $L(-19,1)$. If $n=19$, then by Formula (\ref{dinv1}) we have
\[d(L(19,1),0)=d(L(19,4),11)-2V_{\xi_0},\]
which implies $V_{\xi_0}=-2<0$.  If $n=-19$, then Formula (\ref{dinv1}) gives
\[-d(L(19,1),0)=d(L(19,4),11)-2V_{\xi_0}.\]
Thus $V_{\xi_0}=\frac{5}{2}$, which is impossible since $V_{\xi_0}$ is supposed to be an integer.

\medskip
\noindent
(10) Suppose $k=3$ and $m\geq 6$.  Theorem \ref{theoremgen} (\romannumeral3) implies that there is no solution in this case.

\medskip
In summary, we conclude that the lens space $L(n,1)$ is obtained by a distance one surgery from $L(7,1)$ if and only if $n=\pm 1, 3, 6, 7, 8$ or $11$.
\end{proof}







\begin{thebibliography}{10}

\bibitem{Boil} M. Boileau, S. Boyer, R. Cebanu and G. S. Walsh,  \textit{Knot commensurability and the Berge conjecture.} Geom. Topol., 16(2): 625-664, 2012.
\bibitem{Darcysumner} I. K. Darcy, D. W. Sumners \textit{Rational tangle distances on knots and links.} Math. Proc. Cambridge Philos. Soc., 128(3): 497-510, 2000.
\bibitem{Greene} J. E. Greene, \textit{The lens space realization problem.} Ann. of Math. (2), 177(2):449-511, 2013.
\bibitem {Lick} W. R. Lickorish, \textit{A representation of orientable combinatorial 3-manifolds.} Ann. of Math. (2), 76: 531-540, 1962.
\bibitem{LMV} T. Lidman, A. H. Moore and M. Vazquez,  \textit{Distance one lens space fillings and band surgery on the trefoil knot.} Alg. Geom. Topol., 19(5): 2439-2484, 2019.
\bibitem{MV} A. H. Moore and M. Vazquez,  \textit{A note on band surgery and the signature of a knot.} arXiv:1806.02440, 2018.
\bibitem{NiWu} Y. Ni and Z. Wu,   \textit{Cosmetic surgeries on knots in $S^3$.} J. Reine Angew. Math., 706: 1-17, 2015.
\bibitem{OS1} P. Ozsv\'{a}th and Z. Szab\'{o}, \textit{Absolutely graded Floer homologies and intersection forms for four-manifolds with boundary.} Adv. Math., 173(2): 179-261, 2003.
\bibitem{OSd} P. Ozsv\'{a}th and Z. Szab\'{o}, \textit{On the Floer homology of plumbed three-manifolds.} Geom. Topol., 7(1): 185-224, 2003.
\bibitem{OSi} P. Ozsv\'{a}th and Z. Szab\'{o},  \textit{Knot Floer homology and integer surgeries.} Alg. Geom. Topol., 8(1): 101-153, 2008.
\bibitem{OSr} P. Ozsv\'{a}th and Z. Szab\'{o},  \textit{Knot Floer homology and rational surgeries.} Alg. Geom. Topol., 11(1): 1-68, 2010.
\bibitem{Ras1} J. Rasmussen, \textit{Floer homology and knot complements.} Ph.D. Thesis, arXiv:math/0306378, 2003.
\bibitem{Ras2} J. Rasmussen,  \textit{Lens space surgeries and L-space homology spheres.} arXiv:0710.2531, 2007.
\bibitem{Tur} V. Turaev, \textit{Torsions of 3-dimensional manifolds.} Progress in Math. 208, $\rm Birkh\ddot{a}user$ Verlag, Basel, MR1958479, 2002.
\bibitem {Wal} A. H. Wallace, \textit{Modifications and cobounding manifolds.} Canad. J. Math. 12: 503-528, 1960.
\end{thebibliography}
\end{document}